\title[Geometric effects of hyperbolic  classes on K\"ahler manifolds]{Geometric effects of hyperbolic cohomology classes on K\"ahler manifolds \\ (with an appendix by Benoît Claudon)}
\author[F. Bei, S. Diverio and S. Trapani]{Francesco Bei \and Simone Diverio  \and Stefano Trapani}
\address{Francesco Bei and Simone Diverio \\ Dipartimento di Matematica Guido Castelnuovo \\ Sapienza Universit\`a di Roma \\ Piazzale Aldo Moro 5 \\ I-00185 Roma.}
\email{bei@mat.uniroma1.it \\ simone.diverio@uniroma1.it } 
\address{Stefano Trapani\\ Dipartimento di Matematica Universit\`a di Roma \lq\lq Tor Vergata\rq\rq, Via della Ricerca Scientifica 1, 00133 Roma.} 
\email{trapani@mat.uniroma2.it} 
\keywords{Kähler topologically hyperbolic, Spectral gap, K\"ahler and weakly K\"ahler hyperbolic manifolds, Nakano and Griffiths positivity, hyperbolic cohomology classes, homologically non singular cohomology class, curvature bounds, effective non-vanishing.}
\subjclass[2020]{Primary: 32Q15; Secondary: 58J50, 32L20, 53C21.}
\thanks{The first-named author was partially supported by 2024 Sapienza research grant \lq\lq New research trends in Mathematics at Castelnuovo\rq\rq{} and by INdAM - GNSAGA Project, codice CUP E53C24001950001.\newline\indent
The second-named author was partially supported by 2024 Sapienza research grant \lq\lq New research trends in Mathematics at Castelnuovo\rq\rq{}, by 2023 Sapienza research grant \lq\lq Hyperk\"ahler varieties, deformation theory and hyperbolicity\rq\rq{}, and by the 2022 PRIN project \lq\lq Moduli spaces and special varieties\rq\rq{}.\newline\indent
The third-named author was partially supported by GNSAGA of Indam, by the excellence projects of the department of Mathematics of the University of Rome \lq\lq Tor Vergata\rq\rq{} 2018--2022 CUP E83C18000100006, 2023--2027 CUP E83C23000330006, and by PRIN \lq\lq Real and Complex Manifolds: Topology, Geometry and holomorphic dynamics\rq\rq{} n° 2017JZ2SW5.}
\date{\today}
\theoremstyle{plain}
\newtheorem{thm}{Theorem}[section]
\newtheorem{cor}[thm]{Corollary}
\newtheorem{lem}[thm]{Lemma}
\newtheorem{prop}[thm]{Proposition}
\newtheorem{quest}[thm]{Question}
\newtheorem{conj}[thm]{Conjecture}
\theoremstyle{remark}
\newtheorem{rem}[thm]{Remark}
\theoremstyle{definition}
\newtheorem{defn}[thm]{Definition}
\newcommand{\Z}{\mathbb{Z}}
\newcommand{\R}{\mathbb{R}}
\DeclareMathOperator{\dvol}{dvol}
\DeclareMathOperator{\im}{im}
\DeclareMathOperator{\vol}{vol}
\DeclareMathOperator{\hyp}{hyp}
\DeclareMathOperator{\Lip}{Lip}
\newcommand{\To}{\longrightarrow}
\newcommand{\linf}{\ell^{\infty}}
\newcommand{\Linf}[1]{\ell^{\infty}\left(#1\right)}
\newcommand{\Supp}[1]{\mathrm{Supp}\left(#1\right)}
\newcommand{\compb}{c_b}
\newcommand{\compinfty}{c_{(\infty)}}
\begin{document}
\begin{abstract}
We introduce the notion of Kähler topologically hyperbolic manifold, as a \lq\lq topological\rq\rq{} generalization of Kähler \cite{Gro91} and weakly Kähler \cite{BDET} hyperbolic manifolds. 
Analogously to \cite{BCDT}, we show the birational invariance of this property and then that Kähler topologically hyperbolic manifolds are not uniruled nor bimeromorphic to compact Kähler manifolds with trivial first real Chern class. 
Then, we prove spectral gap theorems for positive holomorphic Hermitian vector bundles on Kähler topologically hyperbolic manifolds, obtaining in particular effective non vanishing results \textsl{à la} Kawamata for adjoint line bundles. 
We finally explore the effects of Kähler topologically hyperbolicity on Ricci and scalar curvature of Kähler metrics.

In the appendix, it is given an explicit description of degree~$2$ hyperbolic classes for finitely presented groups, and an algebro-geometric consequence for Kähler topologically hyperbolic surfaces: they are necessarily of general type.
\end{abstract}
\bibliographystyle{alpha}
\maketitle

\section{Introduction}

In order to prove that every (possibly singular) subvariety of a K\"ahler hyperbolic manifold is of general type ---thus proving Lang's conjecture for this special class of Kobayashi hyperbolic manifolds--- the authors were led in \cite{BDET} to introduce a more general notion of K\"ahler hyperbolicity which they called \emph{weak K\"ahler hyperbolcity}. 

While a K\"ahler hyperbolic manifold  $M$ (in the sense of Gromov \cite{Gro91}) is a compact K\"ahler manifold admitting a K\"ahler form $\omega$ whose lifting to the universal cover becomes $d$-exact and moreover with a bounded primitive (this property is often referred to as $\tilde d$-boundedness), this weaker notion doesn't asks the form $\omega$ to be K\"ahler but rather the cohomology class $[\omega]\in H^{1,1}(M,\mathbb R)$ to be nef and big.

It turns out \cite{BDET} that such manifolds continue to enjoy several remarkable spectral, general typeness, as well as degeneracy of entire curves properties. Moreover, this notion seems to be the \lq\lq right\rq\rq{} birational version of K\"ahler hyperbolic manifolds \cite{BCDT}, as wished in \cite[Open Problem 18.7]{Kollarbook}.

In this paper, we address the following natural question: what if we drop the positivity (bigness and nefness) assumption on the cohomology class $[\omega]$ and we just leave as hypothesis (one of) the consequence of that positivity, namely having non vanishing top self-intersection (homological non singularity condition)? More than this, in order to make the definition as purely topological as possible, we also drop the hypothesis of being a $(1,1)$-class letting the class to be free to be any real $2$-cohomology class. This leads to define a further generalization, stable under homotopy equivalence in the realm of compact Kähler manifolds, of K\"ahler as well as weak K\"ahler hyperbolicity, which we shall call \emph{K\"ahler topological hyperbolicity}. Summing up, a compact K\"ahler manifold supporting a smooth closed $\tilde d$-bounded real $2$-form such that its top self-intersection has non-vanishing integral (cf. Definition \ref{def:thm}) will be called a K\"ahler topologically hyperbolic manifold. 

The point here is that in this business one of the crucial tools we have to control the spectral properties of the $L^2$ Hodge–Kodaira Laplacian is the so-called Gromov--Vafa--Witten trick, which needs only the top self intersection of the hyperbolic cohomology class to be non vanishing in order to work (cf. the proof of Proposition \ref{L2spec}).

First of all, likewise weak K\"ahler hyperbolicity, it turns out that this notion is bimeromorphically invariant for compact K\"ahler manifolds (cf. Theorem \ref{thm:bir}). As a consequence, we show that being K\"ahler topologically hyperbolic still has some weak positivity implications on the birational geometry of the manifold: namely, we prove that neither such a compact K\"ahler manifold can be dominated meromorphically by a product $\mathbb P^1\times V$ (cf. Proposition \ref{uniruled}), \textsl{i.e.} it cannot be uniruled, nor it can be bimeromorphic to a compact K\"ahler manifold with trivial first real Chern class (cf. Theorem \ref{thm:c1}). 
If the manifold is moreover projective (which for the moment does not seem to follow from the definition), the former implies by \cite{BDPP13} that the canonical class must be pseudoeffective, as stated in Corollary \ref{cor:pseff} (which conjecturally should be equivalent to having non-negative Kodaira dimension), while the latter morally and conjecturally should mean that the Kodaira dimension is different from zero. In the special case of compact Kähler surfaces, it will be moreover shown in the appendix that Kähler topological hyperbolicity implies general typeness.

\medskip

Next, in order to hope to be able to deduce any spectral consequence, we need to move the positivity assumptions we formerly had on the cohomology class elsewhere. Thus we investigate holomorphic vector bundles which are positive in the sense of Nakano on an open set of full mass and obtain the following effective non-vanishing statement.

\begin{thm}[Cf. Theorem \ref{nsp}]
Let $(M,h)$ be a K\"ahler topologically hyperbolic manifold. Let $(E,\tau)\rightarrow M$ be a Hermitian holomorphic vector bundle, Nakano positive over an open subset $A\subset M$ of full measure. Then,  
$$
H^0(M,K_M\otimes E)\neq \{0\}
$$ 
and  
$$
h^{0}(M,K_M\otimes E)=\chi(M,K_M\otimes E)>0.
$$
\end{thm}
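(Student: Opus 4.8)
The plan is to deduce the statement from Atiyah's $L^2$ index theorem together with the spectral gap furnished by the Gromov--Vafa--Witten trick. Write $\pi\colon\tilde M\to M$ for the universal covering, $\Gamma=\pi_1(M)$, and put a tilde on the pullbacks to $\tilde M$ of $h$, of $(E,\tau)$, and of the Kähler form; since $M$ is compact, $\tilde M$ is a complete Kähler manifold, the set $\tilde A:=\pi^{-1}(A)$ has full measure in $\tilde M$, and $(\tilde E,\tilde\tau)$ is Nakano positive over $\tilde A$. Throughout we identify $K_M\otimes E$-valued $(0,q)$-forms with $E$-valued $(n,q)$-forms isometrically, and we denote by $\mathcal H^{n,q}_{(2)}(\tilde M,\tilde E)$ the space of $L^2$ harmonic $\tilde E$-valued $(n,q)$-forms on $\tilde M$. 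The two facts to establish are: \emph{(i)} $H^q(M,K_M\otimes E)=0$ and $\mathcal H^{n,q}_{(2)}(\tilde M,\tilde E)=0$ for every $q\ge 1$; and \emph{(ii)} $\mathcal H^{n,0}_{(2)}(\tilde M,\tilde E)\neq\{0\}$. Granting these, Atiyah's $L^2$ index theorem applied to the Dolbeault complex of $\pi^*(K_M\otimes E)$ gives
$$
\chi(M,K_M\otimes E)=\sum_{q=0}^{n}(-1)^q\dim_\Gamma\mathcal H^{n,q}_{(2)}(\tilde M,\tilde E)=\dim_\Gamma\mathcal H^{n,0}_{(2)}(\tilde M,\tilde E)>0,
$$
the last inequality because a von~Neumann dimension vanishes only on the zero module, and \emph{(ii)} holds; on the other hand Hodge theory on the compact $M$ and the vanishing $H^q(M,K_M\otimes E)=0$ for $q\ge 1$ yield $h^0(M,K_M\otimes E)=\chi(M,K_M\otimes E)$. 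Combining, $h^0(M,K_M\otimes E)=\chi(M,K_M\otimes E)>0$, in particular $H^0(M,K_M\otimes E)\neq\{0\}$.

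For \emph{(i)} one uses the Bochner--Kodaira--Nakano inequality for $E$-valued $(n,q)$-forms: for such a $u$ in the common domain of $\bar\partial$ and of its Hilbert-space adjoint,
$$
\|\bar\partial u\|^2+\|\bar\partial^{\,*}u\|^2\ \ge\ \int\langle [\,i\Theta_\tau(E),\Lambda\,]u,u\rangle\,\dvol ,
$$
the curvature of $h$ dropping out because of the twist by $K_M$. If $u$ is harmonic the left-hand side vanishes; but the integral on the right is unchanged if one restricts it to $A$ (resp.\ $\tilde A$), whose complement is a Lebesgue null set, and there the integrand is $\ge c(x)|u|^2$ with $c(x)>0$ for $q\ge 1$ by Nakano positivity. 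Hence $u$ vanishes on a set of full measure, so $u\equiv 0$ by continuity. On the compact $M$ this is immediate; on $\tilde M$ it uses completeness (Andreotti--Vesentini density) together with the fact that $\Theta_{\tilde\tau}(\tilde E)$ is bounded, being pulled back from the compact $M$. This proves $\mathcal H^{n,q}_{(2)}(\tilde M,\tilde E)=0$ and $H^q(M,K_M\otimes E)\cong\mathcal H^{n,q}(M,E)=0$ for $q\ge 1$. \emph{This is the only place where full-measure Nakano positivity, rather than positivity somewhere, is used: the bad locus, being null, is invisible to the $L^2$ curvature term.}

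The crux is \emph{(ii)}, and this is where K\"ahler topological hyperbolicity enters, through Proposition \ref{L2spec}: the Gromov--Vafa--Witten trick, which only needs the chosen $\tilde d$-bounded real $2$-form to have non-vanishing top self-intersection, produces a strictly positive spectral gap at $0$ for the $\bar\partial$-Laplacian on $L^2\Omega^{n,1}(\tilde M,\tilde E)$. In particular the range of $\bar\partial\colon L^2\Omega^{n,0}(\tilde M,\tilde E)\to L^2\Omega^{n,1}(\tilde M,\tilde E)$ is closed, so by \emph{(i)} every $\bar\partial$-closed $L^2$ form of bidegree $(n,1)$ is $\bar\partial$ of an $L^2$ form of bidegree $(n,0)$. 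Now fix $p\in\tilde A$, a coordinate ball $B\Subset\tilde A$ about $p$ on which $[\,i\Theta_{\tilde\tau}(\tilde E),\Lambda\,]\ge c>0$, a nonzero local holomorphic section $\sigma$ of $\tilde K_M\otimes\tilde E$ over $B$, and a cutoff $\chi$ with $\chi\equiv 1$ near $p$ and $\operatorname{supp}\chi\subset B$. Then $\bar\partial(\chi\sigma)$ is a compactly supported $\bar\partial$-closed $(n,1)$-form; solving $\bar\partial v=\bar\partial(\chi\sigma)$ with an $L^2$ estimate weighted by a logarithmic pole at $p$ (admissible since $B\Subset\tilde A$ and such a weight only adds positivity there) forces $v(p)=0$, so that $s:=\chi\sigma-v$ is a nonzero $L^2$ holomorphic section of $\tilde K_M\otimes\tilde E$ over $\tilde M$. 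Hence $\mathcal H^{n,0}_{(2)}(\tilde M,\tilde E)\neq\{0\}$.

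The main obstacle is the spectral gap of Proposition \ref{L2spec}: without it the argument only gives $\chi(M,K_M\otimes E)=\dim_\Gamma\mathcal H^{n,0}_{(2)}(\tilde M,\tilde E)\ge 0$, and the strict positivity is precisely the effect of hyperbolicity---already the example $K_{\mathbb P^1}\otimes\mathcal O(1)\cong\mathcal O(-1)$ shows that full-measure Nakano positivity alone does not suffice for non-vanishing. Beyond invoking that proposition, the two points demanding care are the validity of the Bochner--Kodaira--Nakano identity for $L^2$ forms on the non-compact $\tilde M$ and the weighted $L^2$ bookkeeping ensuring $s\not\equiv 0$, both of which are routine on the complete manifold $\tilde M$ with curvature bounded from below.
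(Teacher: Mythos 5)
Your overall architecture (Atiyah's $L^2$ index theorem, the vanishing in bidegrees $(n,q)$ for $q\ge 1$, and the non-vanishing in bidegree $(n,0)$) coincides with the paper's, and steps \emph{(i)} and the final deduction from \emph{(i)} and \emph{(ii)} are correct. The gap is in \emph{(ii)}, and it is both a misreading of what the Gromov--Vafa--Witten argument delivers and a substantive error in the proposed replacement.

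First, the attribution is backwards. In the paper, the spectral gap in bidegree $(n,1)$ (Proposition \ref{L2spec}(1)) comes from the full-measure Nakano positivity via the Akizuki--Nakano decomposition and Theorem \ref{pspec}; this step does not use hyperbolicity at all beyond the existence of an infinite cover. What the Gromov--Vafa--Witten twisting trick provides (Proposition \ref{L2spec}(3)) is exactly the opposite phenomenon: it \emph{forces} $0$ into the spectrum of the total Dirac operator $\overline{\eth}_{\tilde E,m}$, hence (by the gap in degrees $q\ge 1$) into the spectrum of $\tilde\Delta_{\overline\partial_{\tilde E},m,0}$, and then closed range upgrades this to a nonzero eigenvector. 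This is where $\int_M\mu^m\neq 0$ is used, and it is the only place where hyperbolicity is needed. The GVW trick does not produce a gap; it produces a kernel.

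Second, and more seriously, the weighted $\bar\partial$-extension you propose to replace the GVW argument does not work. The weight with a logarithmic pole at $p$ cannot be defined globally on $\tilde M$ without a cutoff $\chi$, and $i\partial\bar\partial\bigl(\chi(|z-p|/\delta)\cdot 2n\log|z-p|\bigr)$ has a negative part of size roughly $|\log\delta|/\delta^2$ on the transition annulus; since $i\Theta(\tilde\tau)$ is pulled back from the compact $M$ and hence uniformly bounded, it cannot absorb this negativity for any choice of $\delta$. Your parenthetical ``admissible since $B\Subset\tilde A$ and such a weight only adds positivity there'' is the false step: a truncated log pole adds a large amount of negative curvature on the shell where the truncation happens, and $\tilde M$ is not Stein, so there is no global plurisubharmonic exhaustion to push the truncation to infinity. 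Concretely, take $M=\mathbb P^1\times E$ with $E$ an elliptic curve and the line bundle $\mathcal O(1)\boxtimes L$ with $\deg L=1$: here $\pi_1(M)=\mathbb Z^2$ is infinite, the bundle is everywhere positive, the spectral gap in bidegree $(n,q)$ for $q\ge 1$ and closed range both hold, yet $\widetilde{K_M}\otimes\tilde E=\mathcal O(-1)\boxtimes\tilde L$ on $\mathbb P^1\times\mathbb C$ has no nonzero holomorphic sections whatsoever, $L^2$ or otherwise. Your extension argument applies verbatim to this situation and would wrongly produce one; the failure is precisely in the weighted estimate, and this is why the non-amenability/non-singularity input of the GVW index computation is indispensable rather than a convenience. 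You should replace the extension step by the index argument: the $L^2_{\Gamma_s}$-index of the twisted Dirac operator is a polynomial in $s$ whose leading coefficient is a nonzero multiple of $\int_M\mu^m$, hence has isolated zeros; combined with the degree $q\ge 1$ gap and closed range, this yields $\ker\tilde\Delta_{\overline\partial_{\tilde E},m,0}\neq\{0\}$.
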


In the particular case where $E$ is a holomorphic line bundle, the hypothesis on the positivity of the curvature implies that $E$ is big and nef, and thus the non-vanishing for the space of global holomorphic sections of the adjoint bundle $K_M\otimes E$ might be interpreted as a (very) special case of the Ambro--Ionescu--Kawamata effective non-vanishing conjecture (cf. Corollary \ref{line} and the subsequent Remark \ref{rem:aikconj}, also for comparisons with previously known results).

\medskip

Other instances that are affected by topological hyperbolicity are the scalar and Ricci curvatures of K\"ahler metrics on such manifolds. To be more reader friendly here in the introduction, we mention a typical results one can obtain under the more restrictive hypothesis of weak K\"ahler hyperbolicity, and we refer to Section \ref{sec:curv} for several other statements under the mere assumption of topological hyperbolicity.

\begin{thm}[Cf. Theorem \ref{scalcurv}]
Let $M$ be a weakly K\"ahler hyperbolic manifold. Then, for any arbitrarily fixed K\"ahler metric $h$, we have 
$$
\min_{M}(\mathrm{scal}_h)\leq -4\tilde{\lambda}_{0,h},
$$ 
and the equality holds if and only if  
$$
\mathrm{scal}_h\equiv -4\tilde{\lambda}_{0,h}.
$$
\end{thm}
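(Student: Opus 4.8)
The plan is to produce a non-zero $L^2$-holomorphic $n$-form on the universal cover and to extract a scalar-curvature estimate from it, by combining the Bochner--Kodaira identity for the canonical bundle (which brings in the scalar curvature) with the pointwise identity $|\nabla^{1,0}\alpha|=2\,|\partial|\alpha||$ valid for holomorphic sections of a line bundle (which is what allows the bottom of the spectrum on functions to enter).

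First I would fix the Kähler metric $h$, with Kähler form $\omega$, pass to the (complete) universal cover $\pi\colon\tilde M\to M$, and equip $K_{\tilde M}=\pi^*K_M$ with the metric induced by $\pi^*h$, so that its Chern curvature is $-\pi^*\mathrm{Ric}_h$. The one substantive ingredient coming from hyperbolicity is the existence of a non-zero $L^2$-holomorphic $n$-form $\alpha$ on $\tilde M$: since $M$ is weakly Kähler hyperbolic, the Gromov--Vafa--Witten machinery (\cite{Gro91}, \cite{BDET}) yields $\tilde H^{0,n}_{(2)}(\tilde M)\neq\{0\}$, hence $\tilde H^{n,0}_{(2)}(\tilde M)\neq\{0\}$ by complex conjugation. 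Such an $\alpha$ is smooth, and by unique continuation its zero locus is a proper analytic subset of $\tilde M$, so the Lipschitz function $u:=|\alpha|\in L^2(\tilde M)$ satisfies $u>0$ almost everywhere.

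Regarding $\alpha$ as a $\pi^*K_M$-valued $(0,0)$-form, it is automatically $\Delta''$-harmonic (it is $\bar\partial$-closed, and $\bar\partial^*$-closed for bidegree reasons). The Bochner--Kodaira--Nakano identity on $(n,0)$-forms reads $\Delta''=(\nabla^{1,0})^*\nabla^{1,0}+\mathrm{Tr}_\omega\mathrm{Ric}_h$; since $\mathrm{Tr}_\omega\mathrm{Ric}_h$ is bounded and $\alpha\in L^2$, this shows $(\nabla^{1,0})^*\nabla^{1,0}\alpha=-(\mathrm{Tr}_\omega\mathrm{Ric}_h)\,\alpha\in L^2$, and a standard integration-by-parts on the complete manifold $\tilde M$ (Gaffney-type) then gives $\nabla^{1,0}\alpha\in L^2$ together with $\|\nabla^{1,0}\alpha\|_{L^2}^2=-\int_{\tilde M}(\mathrm{Tr}_\omega\mathrm{Ric}_h)\,u^2\,\dvol$. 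On the other hand, since $\alpha$ is holomorphic and $K_{\tilde M}$ is a line bundle, $\nabla^{1,0}\alpha$ is pointwise proportional to $\alpha$, whence the identity $|\nabla^{1,0}\alpha|^2=4|\partial u|^2=2|du|^2$ wherever $\alpha\neq0$, hence almost everywhere; thus $\|\nabla^{1,0}\alpha\|_{L^2}^2=2\int_{\tilde M}|du|^2\,\dvol$. Combining, and using $\mathrm{scal}_h=2\,\mathrm{Tr}_\omega\mathrm{Ric}_h$, one obtains
\[
\int_{\tilde M}|du|^2\,\dvol=-\tfrac14\int_{\tilde M}\mathrm{scal}_h\,u^2\,\dvol .
\]
Now $u\in W^{1,2}(\tilde M)$, so by the variational characterisation of $\tilde\lambda_{0,h}$ (and density of $C^\infty_c(\tilde M)$ in $W^{1,2}(\tilde M)$, valid by completeness) we have $\int_{\tilde M}|du|^2\,\dvol\ge\tilde\lambda_{0,h}\int_{\tilde M}u^2\,\dvol$, while $-\tfrac14\int_{\tilde M}\mathrm{scal}_h\,u^2\,\dvol\le-\tfrac14\big(\min_M\mathrm{scal}_h\big)\int_{\tilde M}u^2\,\dvol$. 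Since $\int_{\tilde M}u^2\,\dvol>0$, dividing out gives $\tilde\lambda_{0,h}\le-\tfrac14\min_M\mathrm{scal}_h$, that is, $\min_M\mathrm{scal}_h\le-4\tilde\lambda_{0,h}$.

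For the equality case: if $\min_M\mathrm{scal}_h=-4\tilde\lambda_{0,h}$, then both inequalities above must be equalities; the second one reads $\int_{\tilde M}\big(\mathrm{scal}_h-\min_M\mathrm{scal}_h\big)u^2\,\dvol=0$, and as the integrand is non-negative and $u>0$ almost everywhere, $\mathrm{scal}_h\equiv\min_M\mathrm{scal}_h=-4\tilde\lambda_{0,h}$ by continuity; the converse is immediate (and one even sees that $u=|\alpha|$ then realises the bottom of the $L^2$-spectrum on functions). The only genuinely non-trivial point in the whole argument is the existence of $\alpha$, i.e. $\tilde H^{n,0}_{(2)}(\tilde M)\neq\{0\}$, which is exactly where weak Kähler hyperbolicity --- and not merely topological hyperbolicity --- is used; everything else is the Bochner technique on a complete manifold together with the elementary identity $|\nabla^{1,0}\alpha|=2\,|\partial|\alpha||$, and it is precisely this identity (rather than Kato's inequality, which would only yield the constant $2$) that produces the sharp constant $4$.
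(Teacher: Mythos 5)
Your proposal is correct, and it follows essentially the same path as the paper: both produce a non-trivial $L^2$-holomorphic $(m,0)$-form on the universal cover from weak K\"ahler hyperbolicity, apply the Bochner--Kodaira--Nakano identity for $K_M$ so that scalar curvature appears as the curvature term, pass from $\|\nabla\alpha\|^2$ to $\|d|\alpha|\,\|^2$ with a factor of $2$, and then feed this into the variational characterisation of $\tilde\lambda_{0,h}$. The paper phrases the argument by contradiction (assume $\mathrm{scal}_h+4\tilde\lambda_{0,h}\ge 0$ with strict inequality somewhere, conclude no non-zero $L^2$ holomorphic $(m,0)$-form can exist, and contradict \cite[Cor.\ 3.8]{BDET}), and it justifies the factor of $2$ by citing the refined Kato inequality and \cite{VanishingDod} for the integration by parts; you run the argument forwards and supply the factor of $2$ by the elementary pointwise computation $|\nabla^{1,0}\alpha|^2 = 4|\partial u|^2 = 2|du|^2$ valid for a holomorphic section of a Hermitian line bundle away from its zero set. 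These are the same estimate (the refined Kato constant for $\bar\partial$-closed line-bundle sections is exactly $2$, and your identity shows it is attained), so there is no loss or gain in sharpness --- your parenthetical remark that \lq\lq Kato's inequality would only yield the constant $2$\rq\rq{} applies to the \emph{unrefined} Kato inequality, not to the refined one the paper uses, which gives the same constant $4$. One small thing you leave implicit: for the statement to have content one needs $\tilde\lambda_{0,h}>0$, which in this setting is the Cheeger-inequality consequence of hyperbolicity recorded in Proposition~\ref{C-In}; your chain of inequalities is of course valid regardless, but it is worth flagging since this is where the geometry of the hyperbolic class enters beyond supplying the non-zero $L^2$ form.
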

Here, $\tilde{\lambda}_{0,h}$ stands for the bottom of the spectrum of the (unique $L^2$ closed extension of the) Laplace--Beltrami operator associated to the induced K\"ahler metric $\tilde h$ on the universal cover of $M$. 

In particular (cf. Corollary \ref{cor:noKm}), we see that such an $M$ cannot carry any K\"ahler metric with $\mathrm{scal}_h>-4\tilde{\lambda}_{0,h}$. In this situation, we already knew, essentially by \cite{BDPP13}, that a weak K\"ahler hyperbolic manifold cannot carry any K\"ahler metric whose total scalar curvature is non-negative (cf. Remark \ref{rem:bdpp}). We are thus giving a more precise quantitative version which tells us that not only the scalar curvature should average more negatively than positively but also that is must be \lq\lq negative enough\rq\rq{} somewhere, in terms of the bottom of the spectrum of the Laplace--Beltrami operator.

\medskip

Finally, the main technical tool of the paper is a spectral result which is proved in Section \ref{sec:ellopgalois}, and can be explained as follows. Suppose we have a differential formally self-adjoint elliptic operator $P$ acting on the smooth global sections of a complex Hermitian vector bundle on a Riemannian manifold, and suppose it admits a decomposition as $D+L$ where $D$ has the same properties as $P$ and $L$ is of order zero. Now, lift everything to the universal cover of $M$ and take the unique $L^2$ closed extension of the lifted operators. Then, we prove in Theorem \ref{pspec} the positivity of the spectrum of the lifting $\tilde P$ of $P$, provided the lifting $\tilde D$ of $D$ is non-negative and the negativity of $L$ is bounded below by minus the bottom of the spectrum of $\tilde D$, on an open set of full mass.

To finish with, unfortunately, it is honest to say that we don't dispose for the moment of explicit remarkable examples of K\"ahler topologically hyperbolic manifolds which are not already weakly K\"ahler hyperbolic. Though such examples are urgently needed, we anyway emphasize that our results are already new in the K\"ahler as well as the weakly K\"ahler hyperbolic settings. 

\subsubsection*{Acknowledgements}
The first-named author would like to thank Paolo Piazza for useful discussions on the local index theorem for spin-c operators.

The second-named author would like to thank S\'ebastien Boucksom and Andreas H\"oring for useful discussions about the effective non-vanishing conjecture by Ambro--Ionescu--Kawamata.

The authors warmly thanks Benoît Claudon for several remarks communicated almost instantaneously right after the first version of this paper has appeared on the arXiv.

\section{Homologically non singular and hyperbolic cohomology classes}

We start with some definitions and properties that will play a crucial role in rest of the paper. 
Let us consider a compact, oriented smooth manifold $N$ of dimension $n$ endowed with a Riemannian metric $h$. 

Following \cite{Kotschick}, we say that a cohomology class $[\eta]\in H^k_{\mathrm{dR}}(N)\simeq H^k(N,\mathbb R)$ is \emph{hyperbolic} if there exists $\beta\in L^{\infty}\Omega^{k-1}(\tilde{N},\tilde{h})\cap \Omega^{k-1}(\tilde{N})$ such that $\tilde{d}_{k-1}\beta=\pi^*\eta$, where $\pi\colon \tilde{N}\rightarrow N$ is the universal covering of $N$ with induced Riemannian metric $\tilde h=\pi^*h$. It is immediate to check that the definition does not depend on the representative of $[\eta]$ nor on the metric $h$, and that $V^k_{\mathrm{hyp}}(N)$, the set of hyperbolic $k$-cohomology classes, is a  vector subspace of $H^k(N,\mathbb R)$. 

If $n=k \ell$ for some $\ell\in \mathbb{N}$ we introduce the following subset of \emph{homologically non singular} classes
$$
V^k_{\mathrm{hns}}(N)\subset H^k(N,\mathbb R)
$$ 
as 
$$
V^k_{\mathrm{hns}}(N):=\left\{[\eta]\in H^k(N,\mathbb R)\mid  \int_N[\eta^{\ell}]\neq 0\right\}.
$$
Note that:
\begin{itemize}
\item if $k=n$ then $V^{n}_{\mathrm{hns}}(N)=H^n(N,\mathbb R)\setminus\{0\}$;
\item if $V^{n}_{\mathrm{hyp}}(N)\neq \{0\}$ then $\pi_1(N)$ is infinite and $V^{n}_{\mathrm{hyp}}(N)=H^{n}_{\mathrm{dR}}(N)$;
\item if $n=k\ell$ with $\ell\in \mathbb{N}$ and $V^k_{\mathrm{hns}}(N)\cap V^k_{\mathrm{hyp}}(N)\neq \emptyset$ then we have
 $V^{n}_{\mathrm{hns}}(N)=V^{n}_{\mathrm{hyp}}(N)\setminus \{0\}=H^n(N,\mathbb R)\setminus \{0\}$ and, in particular, $\pi_1(N)$ is infinite.
\end{itemize}

Let us single out the case $k=2$ on a compact Kähler manifold, which is particularly significant for us.
\begin{defn}\label{def:thm}
A compact K\"ahler manifold $M$ with 
$$
V^{2}_{\mathrm{hns}}(M)\cap V^{2}_{\mathrm{hyp}}(M)\neq \emptyset
$$ 
will be called \emph{Kähler topologically hyperbolic}.
\end{defn}

Examples of Kähler topologically hyperbolic manifolds are provided for instance by K\"ahler hyperbolic manifolds and more generally by weakly K\"ahler hyperbolic manifolds. 

We recall that a compact K\"ahler manifold $M$ is {\em K\"ahler hyperbolic} if there exists a K\"ahler form $\omega$ such that $[\omega]\in V^{2}_{\mathrm{hyp}}(M)$ whereas it is {\em weakly K\"ahler hyperbolic} if there exists $[\mu]\in V^{2}_{\mathrm{hyp}}(M)\cap H^{1,1}(M,\mathbb{R})$ which is big and nef, see \cite{Gro91} and \cite{BDET}, respectively.

\section{Background on K\"ahler manifolds and $L^2$ Hodge theory}
Let $(M,\omega)$ be a complete K\"ahler manifold of complex dimension $m$, let $h$ be the corresponding K\"ahler metric on $M$ and let $(E,\tau)\rightarrow M$ be a holomorphic vector bundle over $M$. We denote with $\overline{\partial}_{E,p,q}\colon \Omega^{p,q}(M,E)\rightarrow \Omega^{p,q+1}(M,E)$ the Dolbeault operator acting on $E$-valued $(p,q)$-forms  and with   
 $\overline{\partial}_{E, p,q}^t\colon \Omega^{p,q+1}(M,E)\rightarrow \Omega^{p,q}(M,E)$  its  formal adjoint with respect to the metric $h$.  The Hodge--Kodaira Laplacian acting on $E$-valued $(p,q)$-forms, denoted here with $\Delta_{\overline{\partial}_{E},p,q}\colon \Omega^{p,q}(M,E)\rightarrow \Omega^{p,q}(M,E)$, is defined as 
 $$
 \Delta_{\overline{\partial}_{E},p,q}:= \overline{\partial}_{E,p,q}^t\circ \overline{\partial}_{E,p,q}+\overline{\partial}_{E,p,q-1}\circ \overline{\partial}_{E,p,q-1}^t.
 $$ 
 Let $L^2\Omega^{p,q}(M,E)$ be the Hilbert space of measurable and square integrable, $E$-valued, $(p,q)$-forms.  We look now at 
\begin{equation}
\label{terzo}
\Delta_{\overline{\partial}_{E},p,q}\colon  L^2\Omega^{p,q}(M,E)\rightarrow L^2\Omega^{p,q}(M,E)
\end{equation}
as an unbounded, closable and densely defined operator on $\Omega^{p,q}_c(M,E)$. It is well known that \eqref{terzo} is essentially self-adjoint, see \textsl{e.g.} \cite[Prop. 12.2]{BDIP}.  Since \eqref{terzo} is formally self-adjoint this is equivalent to saying that \eqref{terzo} admits a unique closed extension. Henceforth, with a little abuse of notation, we denote with
\begin{equation}
\label{sesto}
\Delta_{\overline{\partial}_{E},p,q}\colon  L^2\Omega^{p,q}(M,E)\rightarrow L^2\Omega^{p,q}(M,E)
\end{equation}
the unique closed (and hence self-adjoint) extension of \eqref{terzo}. Since $(M,h)$ is complete \eqref{sesto} is simply 
$$
\overline{\partial}_{E,p,q}^*\circ \overline{\partial}_{E,p,q}+\overline{\partial}_{E,p,q-1}\circ \overline{\partial}_{E,p,q-1}^*\colon  L^2\Omega^{p,q}(M,E)\rightarrow L^2\Omega^{p,q}(M,E)
$$ 
with 
$$
\overline{\partial}_{E,p,q}\colon  L^2\Omega^{p,q}(M,E)\rightarrow L^2\Omega^{p,q+1}(M,E)
$$ 
and 
$$ 
\overline{\partial}_{E,p,q-1}\colon  L^2\Omega^{p,q-1}(M,E)\rightarrow L^2\Omega^{p,q}(M,E)
$$ 
the unique closed extensions of $\overline{\partial}_{E,p,q}\colon  \Omega_c^{p,q}(M,E)\rightarrow \Omega_c^{p,q+1}(M,E)$ and $\overline{\partial}_{E,p,q-1}\colon  \Omega_c^{p,q-1}(M,E)\rightarrow \Omega_c^{p,q}(M,E)$, respectively and  
$$
\overline{\partial}_{E,p,q}^*\colon  L^2\Omega^{p,q+1}(M,E)\rightarrow L^2\Omega^{p,q}(M,E)
$$ 
and 
$$
\overline{\partial}_{E,p,q-1}^*\colon  L^2\Omega^{p,q}(M,E)\rightarrow L^2\Omega^{p,q-1}(M,E)
$$  
the corresponding adjoints. It follows immediately that in $L^2\Omega^{p,q}(M,E)$ we have
$$
\ker(\Delta_{\overline{\partial}_{E},p,q})=\ker(\overline{\partial}_{E,p,q})\cap \ker(\overline{\partial}_{E,p,q-1}^*)
$$ 
and 
$$
\overline{\text{im}(\Delta_{\overline{\partial}_{E},p,q})}=\overline{\text{im}(\overline{\partial}_{E,p,q-1})}\oplus \overline{\text{im}(\overline{\partial}^*_{E,p,q})}.
$$ 
Now we continue this section by recalling the definition and the basic properties of the $L^2$-Hodge numbers. We introduce only what is strictly necessary for our scopes with no goal of completeness.  We invite the reader to consult the seminal paper of Atiyah \cite{Atiyah} and the monograph \cite{Luck} for an in-depth treatment. 

Let $\pi\colon\tilde{M}\rightarrow M$ be a Galois covering of $M$ and let us denote with $\tilde{h}:=\pi^*h$, $\tilde{E}:=\pi^* E$, and $\tilde{\tau}:=\pi^*\tau$ the corresponding pullbacks. Let $\Gamma$ be the group of deck transformations acting on $\tilde{M}$, $\Gamma\times \tilde{M}\rightarrow \tilde{M}$. We recall that  $\Gamma$ is a discrete group acting fiberwise on $\tilde{M}$ and such that the action is transitive on each fiber and properly discontinuous. In particular we have $\tilde{M}/\Gamma=M$. 
Moreover the action of $\Gamma$ lift naturally on $\tilde{E}$ in such a way that $\tilde{E}/\Gamma=E$. 

We recall now that an open subset $U\subset \tilde{M}$ is a  fundamental domain of the action of $\Gamma$ on $\tilde{M}$ if
\begin{itemize}
\item $\tilde{M}=\bigcup_{\gamma\in \Gamma}\gamma(\overline{U})$, 
\item $\gamma_1(U)\cap\gamma_2(U)=\emptyset$ for every $\gamma_1, \gamma_2\in \Gamma$ with $\gamma_1\neq \gamma_2$,
\item $\overline{U}\setminus U$ has zero measure.
\end{itemize}   
It is not difficult to show that $L^2\Omega^{p,q}(\tilde{M},\tilde{E})\cong L^2\Gamma \otimes L^2\Omega^{p,q}(U, E|_U)\cong L^2\Gamma\otimes L^2\Omega^{p,q}(M,E)$ where the tensor products on the right are meant as tensor product of Hilbert spaces. Note that a basis for $L^2\Gamma$ is given by the functions $\delta_{\gamma}$ with $\gamma\in \Gamma$ defined as $\delta_{\gamma}(\gamma')=1$ if $\gamma=\gamma'$ and $\delta_{\gamma}(\gamma')=0$ if $\gamma\neq \gamma'$. Moreover it is clear that $\Gamma$ acts on $L^2\Omega^{p,q}(\tilde{M},\tilde{E})$ by isometries. Let us consider a $\Gamma$-module $V\subset L^2\Omega^{p,q}(\tilde{M},\tilde{E})$, that is a closed subspace of $L^2\Omega^{p,q}(\tilde{M},\tilde{E})$ which is invariant under the action of $\Gamma$. If $\{s_j\}_{j\in \mathbb{N}}$ is an orthonormal basis for $V$ then the following (possibly divergent) series 
$$
\sum_{j\in \mathbb{N}}\int_U\langle s_j,s_j\rangle_{\tilde{h}\otimes \tilde{\tau}}\dvol_{\tilde{h}}
$$ 
is well defined and does not depend either on the choice of the orthonormal basis of $V$ or on the choice of the fundamental domain of the action of $\Gamma$ on $\tilde{M}$, see \textsl{e.g.} \cite{Atiyah}. 

The Von Neumann dimension of a $\Gamma$-module $V$ is thus defined as 
$$
\dim_{\Gamma}(V):=\sum_{j\in \mathbb{N}}\int_U\langle s_j,s_j\rangle_{\tilde{h}\otimes \tilde{\tau}}\dvol_{\tilde{h}}
$$ 
where $\{\eta_j\}_{j\in \mathbb{N}}$ is any orthonormal basis for $V$ and $U$ is any fundamental domain of the action of $\Gamma$ on $\tilde{M}$.  Since the Hodge--Kodaira Laplacian $\tilde{\Delta}_{\overline{\partial}_{E},p,q}\colon L^2\Omega^{p,q}(\tilde{M},\tilde{E})\rightarrow L^2\Omega^{p,q}(\tilde{M},\tilde{E})$ commutes with the action of $\Gamma$, a natural and important example of $\Gamma$-module is provided by 
$\ker(\tilde{\Delta}_{\overline{\partial}_{\tilde{E}},p,q})$, the space of $\tilde{E}$-valued, $L^2$-harmonic forms of degree $(p,q)$ on $(\tilde{M},\tilde{E})$, for each $p,q=0,\dots,m$.  

The $L^2$-Hodge numbers of  $E\rightarrow M$ with respect to a Galois $\Gamma$-covering $\pi:\tilde{M}\rightarrow M$ are then defined as $$h^{p,q}_{(2),\Gamma,\overline{\partial}_E}(M):=\dim_{\Gamma}(\ker(\Delta_{\overline{\partial}_E,p,q}))$$
with $\ker(\Delta_{\overline{\partial}_E,p,q})$ the kernel of $\Delta_{\overline{\partial}_E,p,q}\colon L^2\Omega^{p,q}(\tilde{M},\tilde{E})\rightarrow L^2\Omega^{p,q}(\tilde{M},\tilde{E})$. We point out that $h^{p,q}_{(2),\Gamma,\overline{\partial}_E}(M)$ are non-negative real numbers independent on the choice of the Hermitian metrics $h$ on $M$ and $\tau$ on $E$. When the $L^2$-Hodge numbers are computed with respect to the universal covering of $M$ we will denote them as  $h^{p,q}_{(2),\overline{\partial}_E}(M,E)$.

\section{First properties and birational geometry of K\"ahler topologically hyperbolic manifolds}\label{sec:bimer}

Let us start with some basic properties of K\"ahler topologically hyperbolic manifolds.

\begin{prop}\label{prop:etalecover}
Let $M$ be a compact Kähler manifold and $\nu\colon M'\to M$ be a finite étale cover. Then, $M$ is Kähler topologically hyperbolic if and only if $M'$ is so.
\end{prop}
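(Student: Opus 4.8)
The plan rests on the observation that, $\nu$ being finite étale, $\pi_1(M')$ is a finite-index subgroup of $\pi_1(M)$, so $M$ and $M'$ share the \emph{same} universal covering $\tilde M$, the universal covering of $M$ factoring as $\tilde M \xrightarrow{\pi'} M' \xrightarrow{\nu} M$. Hence hyperbolicity of a $2$-class on $M$ or on $M'$ is tested on one and the same $\tilde M$ (with the same pulled-back metric if one equips $M'$ with $\nu^*h$), and between $M$ and $M'$ only the deck group and the total self-intersection number change. This already yields the easy implication: if $[\eta]\in V^2_{\mathrm{hns}}(M)\cap V^2_{\mathrm{hyp}}(M)$ and $\tilde d\beta=\pi_M^*\eta$ with $\beta$ bounded on $\tilde M$, then $\pi'^*(\nu^*\eta)=\pi_M^*\eta=\tilde d\beta$ shows $[\nu^*\eta]\in V^2_{\mathrm{hyp}}(M')$, while $\int_{M'}(\nu^*\eta)^m=(\deg\nu)\int_M\eta^m\neq0$ shows $[\nu^*\eta]\in V^2_{\mathrm{hns}}(M')$; so $M'$ is K\"ahler topologically hyperbolic. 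Read backwards, the same two computations show that for the converse it suffices to produce \emph{one} hyperbolic $2$-class on $M$ with non-vanishing top self-intersection.

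For the converse I would first reduce to the Galois case. Let $p\colon\hat M\to M$ be the Galois closure of $\nu$, i.e. the cover attached to the normal core of $\pi_1(M')\leq\pi_1(M)$; then $\hat M\to M'$ and $p$ are both finite étale Galois, $\hat M$ is K\"ahler topologically hyperbolic by the easy implication applied to $\hat M\to M'$, and it is enough to show that K\"ahler topological hyperbolicity descends along $p$ (combined with the easy implication this gives the full converse). Write $G=\mathrm{Gal}(\hat M/M)$, $M=\hat M/G$, and choose $[\hat\eta]\in V^2_{\mathrm{hns}}(\hat M)\cap V^2_{\mathrm{hyp}}(\hat M)$, represented by the harmonic form of a $G$-invariant K\"ahler metric pulled back from $M$. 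The $G$-average $\eta_0:=\tfrac1{|G|}\sum_{g\in G}g^*\hat\eta$ is a $G$-invariant closed real $2$-form, hence $\eta_0=p^*\eta$ for a unique closed real $2$-form $\eta$ on $M$. Since hyperbolicity of a class depends neither on the metric nor on anything but the (functorial) universal cover, $V^2_{\mathrm{hyp}}(\hat M)$ is a $\mathrm{Diff}(\hat M)$-invariant linear subspace of $H^2(\hat M,\mathbb R)$; in particular it is $G$-invariant, so $[\eta_0]=\tfrac1{|G|}\sum_g g^*[\hat\eta]$ lies in it, and therefore $[\eta]\in V^2_{\mathrm{hyp}}(M)$ by the easy implication applied to $p$.

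The main obstacle is then to show that this $G$-average is still homologically non singular, i.e. $\int_M\eta^m=\tfrac1{|G|}\int_{\hat M}\eta_0^m\neq0$: averaging a class with nonzero top self-intersection need not, by itself, preserve this, so the K\"ahler structure must be used. The route I would follow is: (a) show that on a compact K\"ahler manifold $V^2_{\mathrm{hyp}}$ is compatible with the Hodge decomposition (a bounded primitive on the complete K\"ahler cover can be split by type via the K\"ahler identities), so that for the harmonic splitting $\hat\eta=\hat\eta^{2,0}+\hat\eta^{1,1}+\hat\eta^{0,2}$ the real classes $[\hat\eta^{1,1}]$ and $[\hat\eta^{2,0}+\hat\eta^{0,2}]$ again lie in $V^2_{\mathrm{hyp}}(\hat M)$, these splittings being $G$-equivariant; (b) expand $\int_{\hat M}\hat\eta^m$ by type, so that only the terms $\int_{\hat M}(\hat\eta^{2,0})^j\wedge\overline{(\hat\eta^{2,0})^j}\wedge(\hat\eta^{1,1})^{m-2j}$ survive, those built purely from the $(2,0)$-and-$(0,2)$ parts carrying the fixed sign of $i^{\bullet}\,\xi\wedge\bar\xi\geq0$; (c) isolate from $\hat\eta$ a $\tilde d$-bounded component of pure Hodge type whose top self-intersection is nonzero of a definite sign, and use that the orientation-preserving biholomorphisms $g\in G$ preserve both that Hodge type and that sign, so that its $G$-average lies in a convex, $G$-stable cone of classes with nonzero top self-intersection and hence remains homologically non singular; descending that average produces the required class on $M$. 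Step (c) is where I expect the real difficulty to lie, and controlling the possible cancellations in full generality is the crux of the proof; note, however, that it is completely transparent in the weakly K\"ahler hyperbolic subcase, where $V^2_{\mathrm{hyp}}(\hat M)$ contains a big and nef $(1,1)$-class, $G$ preserves the convex nef cone and bigness, a sum of big-and-nef classes is big and nef, and so the $G$-average of such a class descends to a big and nef class on $M$.
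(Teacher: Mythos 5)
Your first two paragraphs are fine, and they coincide with the core of the paper's (cited) argument: since $\nu$ is finite \'etale, $M$ and $M'$ share the universal cover, so hyperbolicity of a degree-$2$ class is tested on the same $\tilde M$ and is preserved in both directions under $\nu^*$; and $\int_{M'}(\nu^*\eta)^m=(\deg\nu)\int_M\eta^m$ handles the homological non-singularity in the easy direction. The reduction of the converse to the Galois case via the normal core, and the observation that the $G$-average $[\eta_0]$ is again hyperbolic because $V^2_{\mathrm{hyp}}(\hat M)$ is a $G$-stable linear subspace, are also correct.

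The problem is exactly where you say it is: nothing in what you wrote shows that the $G$-average retains non-vanishing top self-intersection, and the proposed Hodge-theoretic repair (a)--(c) does not close the gap. Step (a) is the first casualty: from $\pi^*\hat\eta=\tilde d\beta$ with $\beta=\beta^{1,0}+\beta^{0,1}$ bounded you only get $\pi^*\hat\eta^{2,0}=\partial\beta^{1,0}$, $\pi^*\hat\eta^{0,2}=\bar\partial\beta^{0,1}$, $\pi^*\hat\eta^{1,1}=\bar\partial\beta^{1,0}+\partial\beta^{0,1}$, i.e.\ bounded $\partial$- and $\bar\partial$-primitives for the pure-type pieces, not bounded $d$-primitives. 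On the non-compact cover $\tilde M$ there is no $\partial\bar\partial$-lemma or Hodge decomposition available to promote bounded $\partial$-exactness to bounded $d$-exactness, and the K\"ahler identities by themselves do not give it; so the claim that $[\hat\eta^{1,1}]$ and $[\hat\eta^{2,0}+\hat\eta^{0,2}]$ lie in $V^2_{\mathrm{hyp}}(\hat M)$ is unsupported. You yourself flag step (c) as the crux and leave it open. Concretely, the issue is that $\alpha\mapsto\int_{\hat M}\alpha^m$ is a $G$-invariant degree-$m$ polynomial on $V^2_{\mathrm{hyp}}(\hat M)$ which is nonzero somewhere, but \emph{a priori} could vanish identically on the $G$-fixed subspace $V^2_{\mathrm{hyp}}(\hat M)^G=p^*V^2_{\mathrm{hyp}}(M)$; the positivity at the $G$-invariant K\"ahler class is useless since that class need not be hyperbolic.

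You correctly note at the end that in the weakly K\"ahler hyperbolic setting the difficulty disappears, because big-and-nef is a $G$-stable convex condition preserved by averaging, and bigness implies the top self-intersection is positive. That is precisely why the argument of \cite[Prop.~2.6]{BDET} goes through there. The paper does not spell out the proof but simply refers to that reference; since you have, rightly, identified that the naive averaging step does not by itself give homological non-singularity once the positivity hypothesis is dropped, your proposal as written remains incomplete on the converse. You should consult the proof in \cite{BDET} directly and check how (or whether) the argument given there survives without using the big-and-nef hypothesis, rather than attempting to rebuild it through the Hodge-type splitting, which does not hold in the form you assert.
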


\begin{proof}
The proof is identical to that of \cite[Proposition 2.6]{BDET}.
\end{proof}

Likewise Kähler and weakly Kähler hyperbolic manifolds, Kähler topologically hyperbolic manifolds do not have amenable fundamental groups. 

\begin{prop}\label{prop:amenable}
Let $M$ be a Kähler topologically hyperbolic manifold. Then, $\pi_1(M)$ is not amenable. In particular, it has exponential growth for any finite system of generators and can be neither virtually abelian nor virtually nilpotent.
\end{prop}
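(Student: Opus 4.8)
The plan is to argue by contradiction via a F\o{}lner exhaustion, exactly as for (weakly) K\"ahler hyperbolic manifolds in \cite{Gro91, BDET}. Suppose $\Gamma := \pi_1(M)$ is amenable, set $m := \dim_{\mathbb C} M$, and choose a class in $V^{2}_{\mathrm{hns}}(M)\cap V^{2}_{\mathrm{hyp}}(M)$ together with a smooth closed representative $\omega\in\Omega^2(M)$; thus $\int_M\omega^m\neq 0$, while $\pi^*\omega=\tilde d\beta$ for some $\beta\in L^\infty\Omega^1(\tilde M,\tilde h)\cap\Omega^1(\tilde M)$ on the universal cover $\pi\colon\tilde M\to M$, where $\tilde h=\pi^*h$. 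Wedging, $\pi^*(\omega^m)=(\pi^*\omega)^m=\tilde d\big(\beta\wedge(\pi^*\omega)^{m-1}\big)$, and the primitive $\gamma:=\beta\wedge(\pi^*\omega)^{m-1}$ again lies in $L^\infty\Omega^{2m-1}(\tilde M,\tilde h)$: indeed $\pi^*\omega$ is $\tilde h$-bounded because $\omega$ is smooth on the compact manifold $M$, and the pointwise norm of a wedge is controlled by the product of the norms. So $\pi^*(\omega^m)$ is a $\Gamma$-invariant, $\tilde d$-exact $2m$-form on $\tilde M$ admitting a bounded primitive; the whole point is that amenability of $\Gamma$ is incompatible with such an object having non-zero mean over a fundamental domain.

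Next I would quantify non-amenability against this form. Fix a finite symmetric generating set $S$ of $\Gamma$ and a fundamental domain $U$ for the $\Gamma$-action whose topological boundary $\partial U:=\overline U\setminus U$ is $(2m-1)$-rectifiable with $\vol_{2m-1}(\partial U)<\infty$ --- for instance the union of the open top-dimensional cells of a single lift of a smooth triangulation of $M$. By amenability choose finite subsets $F_n\subseteq\Gamma$ with $|\partial_S F_n|=o(|F_n|)$, where $\partial_S F_n$ denotes the $S$-boundary layer of $F_n$, and put $K_n:=\bigcup_{\gamma\in F_n}\gamma(\overline U)$. On one hand, since $\pi^*(\omega^m)$ is $\Gamma$-invariant and the translated faces $\gamma(\partial U)$ are $\dvol_{\tilde h}$-null, $\int_{K_n}\pi^*(\omega^m)=|F_n|\int_U\pi^*(\omega^m)=|F_n|\int_M\omega^m$. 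On the other hand, Stokes' theorem on the piecewise-smooth compact domain $K_n$ gives $\int_{K_n}\pi^*(\omega^m)=\int_{\partial K_n}\gamma$, and $\partial K_n$ is contained in the finitely many faces $\gamma(\partial U)$ with $\gamma$ in the boundary layer, so $\big|\int_{\partial K_n}\gamma\big|\le \|\gamma\|_{L^\infty}\,\vol_{2m-1}(\partial U)\,|\partial_S F_n|$. Combining and dividing by $|F_n|$ yields $\big|\int_M\omega^m\big|\le \|\gamma\|_{L^\infty}\,\vol_{2m-1}(\partial U)\,|\partial_S F_n|/|F_n|\to 0$, hence $\int_M\omega^m=0$, contradicting the homological non-singularity of the chosen class.

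The "in particular" clause is then immediate: virtually abelian and virtually nilpotent finitely generated groups are amenable, and a finitely generated group of subexponential growth is amenable (its balls form a F\o{}lner sequence), so a non-amenable $\pi_1(M)$ can be none of these and must have exponential growth for every finite system of generators. The main technical obstacle is the rigorous packaging of Stokes' theorem on the non-smooth domains $K_n$ --- one has to pick $U$ carefully so that $\partial U$ has finite $(2m-1)$-volume and so that $\partial K_n$ is genuinely controlled by $\partial_S F_n$. An alternative, and perhaps cleaner, route that sidesteps this is to replace the sets $K_n$ by smooth cut-offs $f_n$ adapted to the F\o{}lner sequence and integrate by parts, $\int_{\tilde M}f_n\,\pi^*(\omega^m)=-\int_{\tilde M}df_n\wedge\gamma$, bounding the right-hand side by $\|\gamma\|_{L^\infty}\int_{\tilde M}|df_n|$ and using that this last integral is $o\big(\int_{\tilde M}f_n\big)$ for a suitable F\o{}lner cut-off; this is in fact the usual way one proves Brooks' theorem, and it fits the present situation since we have a bounded primitive rather than an $L^2$ one. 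Either way the argument is the same as in \cite{Gro91} and \cite{BDET}.
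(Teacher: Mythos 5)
Your argument is correct and is the classical F\o{}lner/Stokes argument going back to Gromov, which is exactly what the paper's cited source [BDET, Prop.\ 2.7] uses and what the paper means by ``the proof is identical to that of [BDET, Prop.\ 2.7]''; the only thing needed to transplant it is that the class be hyperbolic and homologically non-singular, which is precisely the definition of K\"ahler topological hyperbolicity. One minor stylistic point: you overload the symbol $\gamma$ both for the bounded primitive $\beta\wedge(\pi^*\omega)^{m-1}$ and for generic elements of $\Gamma$ in the same display, which should be disambiguated.
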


\begin{proof}
Here also, the proof is identical to that of \cite[Proposition 2.7]{BDET}.
\end{proof}

We also have the following behaviour for products.

\begin{prop}\label{prop:prod}
The product of two Kähler topologically hyperbolic manifolds is again Kähler topologically hyperbolic.

Conversely, let $M$ be a compact K\"ahler manifold which is biholomorphic to a non trivial product $N\times V$, where $H^1(N,\mathbb R)=0$. 
If $M$ is K\"ahler topologically hyperbolic, so are $N$ and $V$.
\end{prop}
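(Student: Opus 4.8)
The plan is to treat the two implications separately, in each case using the K\"unneth formula to move the problem to the two factors and then verifying \emph{(i)} hyperbolicity and \emph{(ii)} homological non-singularity of the relevant class.

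For the direct implication, let $M_i$ ($i=1,2$) be K\"ahler topologically hyperbolic of complex dimension $m_i$, with a witnessing class $[\eta_i]\in V^2_{\mathrm{hns}}(M_i)\cap V^2_{\mathrm{hyp}}(M_i)$. On the compact K\"ahler manifold $M:=M_1\times M_2$ I would consider $[\eta]:=p_1^*[\eta_1]+p_2^*[\eta_2]$, where $p_i\colon M\to M_i$ is the projection. Since hyperbolicity is independent of the chosen metric, I equip $M$ with a product metric, so that its universal cover is $\tilde M_1\times\tilde M_2$ with the product of the lifted metrics; pulling back bounded smooth primitives $\beta_i$ of $\pi_i^*\eta_i$ along $\tilde p_i\colon\tilde M_1\times\tilde M_2\to\tilde M_i$ yields a bounded smooth primitive $\tilde p_1^*\beta_1+\tilde p_2^*\beta_2$ of $\pi^*\eta$, the elementary point being that for a product metric the pointwise norm of $\tilde p_i^*\beta_i$ equals that of $\beta_i$; hence $[\eta]\in V^2_{\mathrm{hyp}}(M)$. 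For homological non-singularity I would expand $[\eta]^{m_1+m_2}$ by the binomial theorem and integrate over $M$: by K\"unneth/Fubini only the mixed term survives, giving $\int_M[\eta]^{m_1+m_2}=\binom{m_1+m_2}{m_1}\big(\int_{M_1}[\eta_1]^{m_1}\big)\big(\int_{M_2}[\eta_2]^{m_2}\big)\neq0$, so $[\eta]\in V^2_{\mathrm{hns}}(M)$.

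For the converse, assume $M=N\times V$ with $N,V$ of positive complex dimensions $m,n$ and $H^1(N,\mathbb R)=0$, and let $[\eta]\in V^2_{\mathrm{hns}}(M)\cap V^2_{\mathrm{hyp}}(M)$. The hypothesis $H^1(N,\mathbb R)=0$ makes K\"unneth give $H^2(M,\mathbb R)=p_N^*H^2(N,\mathbb R)\oplus p_V^*H^2(V,\mathbb R)$, so I can write uniquely $[\eta]=p_N^*[\alpha]+p_V^*[\beta]$. To see that $[\alpha]$ is hyperbolic I again fix a product metric on $M$, choose a base point $\tilde v_0\in\tilde V$ and restrict a bounded smooth primitive $\gamma$ of $\pi^*\eta$ to the totally geodesic slice $\tilde N\times\{\tilde v_0\}$, which is isometric to $\tilde N$: the restriction of $p_V^*\beta$ to this slice vanishes (the slice maps to a point of $\tilde V$), hence $\tilde d\big(\gamma|_{\tilde N\times\{\tilde v_0\}}\big)=\pi_N^*\alpha$, and the restriction to a submanifold of a bounded form is again bounded for the induced metric; thus $[\alpha]\in V^2_{\mathrm{hyp}}(N)$, and symmetrically $[\beta]\in V^2_{\mathrm{hyp}}(V)$ (this direction does not use $H^1(N,\mathbb R)=0$). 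Expanding $[\eta]^{m+n}$ and integrating as before yields $0\neq\int_M[\eta]^{m+n}=\binom{m+n}{m}\big(\int_N[\alpha]^m\big)\big(\int_V[\beta]^n\big)$, so both $\int_N[\alpha]^m$ and $\int_V[\beta]^n$ are non-zero, i.e. $[\alpha]\in V^2_{\mathrm{hns}}(N)$ and $[\beta]\in V^2_{\mathrm{hns}}(V)$; since $N$ and $V$ are compact K\"ahler (they are factors of a compact K\"ahler product, e.g. by restricting the K\"ahler form to a slice), they are K\"ahler topologically hyperbolic.

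The two binomial/K\"unneth computations are routine, amounting to the familiar fact that homological non-singularity splits across a product. The one genuinely delicate point is the behaviour of $L^\infty$ bounds under the two operations used in the hyperbolicity part---pullback along the projection of a Riemannian product and restriction to a slice---which is precisely why one should fix a product metric from the outset (legitimate, since hyperbolicity is metric-independent): once the metric splits, both operations are pointwise norm non-increasing and the estimates are immediate. A secondary subtlety worth stressing is that the clean splitting $[\eta]=p_N^*[\alpha]+p_V^*[\beta]$ really uses $H^1(N,\mathbb R)=0$, since otherwise a cross term in $H^1(N)\otimes H^1(V)$ would obstruct the slicing argument.
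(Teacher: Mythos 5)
Your proof is correct and follows essentially the same route as the paper: the same class $p_1^*[\eta_1]+p_2^*[\eta_2]$ for the direct implication, the same K\"unneth splitting $[\eta]=p_N^*[\alpha]+p_V^*[\beta]$ (using $H^1(N,\mathbb R)=0$), and the same binomial computation isolating the single surviving mixed term. The only small difference is in how hyperbolicity of $[\alpha]$ and $[\beta]$ is established: the paper identifies them as $\iota_1^*[\eta]$, $\iota_2^*[\eta]$ for the slice inclusions and invokes \cite[Lemma 2.28]{BDET} (pullback by smooth maps between compact manifolds preserves hyperbolicity), while you prove the needed special case directly by fixing a product metric and observing that both pullback along the projections and restriction to a totally geodesic slice are pointwise norm non-increasing on forms --- a self-contained substitute for the cited lemma.
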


\begin{proof}
The fact that the product of two Kähler topologically hyperbolic manifolds is again Kähler topologically hyperbolic is straightforward. Indeed, if $\alpha$ and $\beta$ are real $2$-cohomology classes, respectively on the compact Kähler manifolds $M_1$ and $M_2$, which are hyperbolic and homologically non singular, then $\operatorname{pr_1}^*\alpha+\operatorname{pr_2}^*\beta$ is a real $2$-cohomology class on $M_1\times M_2$ which is hyperbolic and homologically non singular, where $\operatorname{pr_i}\colon M_1\times M_2\to M_i$ is the projection onto the $i$-th factor.

For the converse, identifying $M$ with its biholomorphic image $N\times V$, call respectively $\operatorname{pr}_N\colon M\to N$ and $\operatorname{pr}_V\colon M\to V$ the projections onto the factors. By K\"unnet formula, we have 
$$
H^2(M,\mathbb R)\simeq \operatorname{pr}_N^*H^2(N,\mathbb R)\oplus\operatorname{pr}_V^*H^2(V,\mathbb R),
$$
since $H^1(N,\mathbb R)=0$. Given $\alpha\in H^2(M,\mathbb R)$, we accordingly have $\alpha=\operatorname{pr}_N^*\beta+\operatorname{pr}_V^*\gamma$ for some classes $\beta\in H^2(N,\mathbb R)$ and $\gamma\in H^2(V,\mathbb R)$. Fix any point $(p_0,q_0)\in N\times V$ and consider the inclusions 
$$
\iota_1\colon N\simeq N\times\{q_0\}\hookrightarrow N\times V,\quad\iota_2\colon V\simeq \{p_0\}\times V\hookrightarrow N\times V.
$$
We have $\operatorname{pr}_N\circ\iota_1=\operatorname{Id}_N$ and $\operatorname{pr}_V\circ\iota_2=\operatorname{Id}_V$, so that necessarily $\beta=\iota_1^*\alpha$ and $\gamma=\iota_2^*\alpha$. In particular, by \cite[Lemma 2.28]{BDET} if $\alpha$ is a hyperbolic class, so are $\beta$ and $\gamma$.

Set $0<2n=\dim N,2v=\dim V$, so that $m=n+v$. If $\alpha$ is also homologically non singular, then 
$$
0\ne \alpha^{m}=\sum_{i=0}^m\binom{m}{i}\operatorname{pr}_N^*\beta^{m-i}\wedge\operatorname{pr}_V^*\gamma^{i}=\binom{m}{v}\operatorname{pr}_N^*\beta^{n}\wedge\operatorname{pr}_V^*\gamma^{v},
$$
for trivial dimensional reasons, so that $\beta^{n},\gamma^v\ne 0$ are homologically non singular and the lemma follows.
\end{proof}

\begin{rem}
Without the hypothesis about the vanishing of $H^1$, in general we have the presence of the factor $\operatorname{pr}_N^*H^1(N,\mathbb R)\otimes\operatorname{pr}_V^*H^1(V,\mathbb R)$ in the K\"unnet decomposition of $H^2(M,\mathbb R)$ which prevents us to be able to conclude with the same proof. 

It is quite surprising and non trivial that such a factor does not appear if one restricts the attention to hyperbolic classes. It is indeed proven by B. Claudon in Appendix \ref{sec:appendix}, Propositions \ref{prop:hyp product} and \ref{prop:explicit description}, that 
$$
V^2_\textrm{hyp}(M)\simeq\operatorname{pr}_N^*V^2_\textrm{hyp}(N)\oplus\operatorname{pr}_V^*V^2_\textrm{hyp}(V).
$$
This implies in particular that our Proposition \ref{prop:prod} above in fact holds without any supplementary hypothesis about the vanishing of the first cohomology of the factors and thus reads: \emph{The product of two compact Kähler manifolds is Kähler topologically hyperbolic if and only if both factors are}.
\end{rem}

Last but not least, as announced in the introduction, being Kähler topologically hyperbolic is a topological notion, in the sense that it is a stable property with respect to homotopy equivalence in the realm of compact Kähler manifolds.

\begin{prop}\label{prop:homotopic}
Let $M$ be a compact Kähler manifold which is homotopy equivalent to a Kähler topologically hyperbolic manifold $X$. Then, $M$ is topologically Kähler hyperbolic.
\end{prop}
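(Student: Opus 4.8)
The plan is to reduce the statement to the birational invariance of K\"ahler topological hyperbolicity (Theorem \ref{thm:bir}) together with the fact that the defining condition $V^2_{\mathrm{hns}}(M)\cap V^2_{\mathrm{hyp}}(M)\neq\emptyset$ only involves data that is preserved under homotopy equivalence. Concretely, let $f\colon M\to X$ be a homotopy equivalence between compact K\"ahler manifolds, with homotopy inverse $g\colon X\to M$. First I would observe that $f$ induces an isomorphism $f^*\colon H^2(X,\mathbb R)\to H^2(M,\mathbb R)$ on de Rham cohomology, and that, since $f$ is a homotopy equivalence, it induces an isomorphism $\pi_1(M)\xrightarrow{\sim}\pi_1(X)$ and hence a $\pi_1$-equivariant homotopy equivalence $\tilde f\colon\tilde M\to\tilde X$ between the universal covers compatible with $\pi$. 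The key point is that hyperbolicity of a cohomology class is a homotopy-invariant notion: this is already implicit in the remark after the definition of $V^k_{\mathrm{hyp}}$ in Section 2, where it is noted that the definition does not depend on the metric. I would make this explicit: if $[\eta]\in V^2_{\mathrm{hyp}}(X)$, choose a de Rham representative $\eta$ and a bounded primitive $\beta$ of $\pi_X^*\eta$ on $\tilde X$; pulling back by $\tilde f$ (after smoothing $f$ and $g$, which is harmless up to homotopy) we get that $\tilde f^*(\pi_X^*\eta)=\pi_M^*(f^*\eta)$ admits the primitive $\tilde f^*\beta$, which is still bounded because $\tilde f$ can be taken to be a quasi-isometry — or, more robustly, because boundedness of a primitive of a pulled-back class on the universal cover is exactly the $\Gamma$-invariant/$\ell^\infty$-condition that only depends on $\pi_1$ and the class (one can phrase this via bounded cohomology: $[\eta]$ is hyperbolic iff its image in $H^2_b(\pi_1(X),\mathbb R)$ vanishes, cf. \cite{Gro91,Kotschick}, and $f_*\colon H^*_b(\pi_1(M))\to H^*_b(\pi_1(X))$ is an isomorphism).

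Next, I would handle the homological non-singularity. Here the subtlety is that $\int_M\xi^m\ne 0$ for $\xi\in H^2(M,\mathbb R)$, $m=\dim_{\mathbb C}M$, a priori involves the fundamental class, hence the orientation and the ring structure of $H^*(M,\mathbb R)$ — all homotopy invariants. A homotopy equivalence $f$ respects the cup product and sends the fundamental class $[M]\in H_{2m}(M,\mathbb Z)$ to $\pm[X]$ (it is a degree $\pm1$ map since it is a homotopy equivalence and $2m=\dim_{\mathbb R}M=\dim_{\mathbb R}X$ forces $\dim_{\mathbb C}X=m$ as well, both being even-dimensional closed oriented manifolds of the same real dimension). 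Therefore $\int_M(f^*\xi)^m=\pm\int_X\xi^m$, so $f^*$ carries $V^2_{\mathrm{hns}}(X)$ onto $V^2_{\mathrm{hns}}(M)$. Combining the two paragraphs, if $[\eta]\in V^2_{\mathrm{hns}}(X)\cap V^2_{\mathrm{hyp}}(X)$ then $f^*[\eta]\in V^2_{\mathrm{hns}}(M)\cap V^2_{\mathrm{hyp}}(M)$, so $M$ is K\"ahler topologically hyperbolic.

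The step I expect to be the main obstacle is the careful justification that a bounded primitive on $\tilde X$ pulls back to a bounded primitive on $\tilde M$. A continuous homotopy equivalence need not be smooth, and its lift need not be a quasi-isometry for arbitrary metrics; the clean fix is either (i) to invoke the bounded-cohomology characterization of hyperbolic classes, for which homotopy invariance is automatic because $H^*_b$ depends only on $\pi_1$ (and the paper already cites \cite{Kotschick}), or (ii) to note that $\tilde f$ and $\tilde g$ are $\Gamma$-equivariant and that $\tilde g\circ\tilde f$ is $\Gamma$-equivariantly homotopic to the identity through maps moving points a uniformly bounded distance (by compactness of $M$), which suffices to transfer $L^\infty$-boundedness of primitives of pulled-back forms. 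I would present the argument using (i) as the cleanest route, and remark that one could equally run (ii); either way the remaining bookkeeping (de Rham/singular comparison, naturality of cup product, degree $\pm1$) is routine. Everything else is a direct consequence of the functoriality properties recorded in Section 2 and \cite[Lemma 2.28]{BDET}.
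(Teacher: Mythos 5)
Your proposal is correct and follows essentially the same route as the paper: smooth the homotopy equivalence via Whitney approximation, pull back a class $\alpha\in V^2_{\mathrm{hyp}}(X)\cap V^2_{\mathrm{hns}}(X)$ along $f$, and check that hyperbolicity and homological non-singularity survive. The paper dispatches the hyperbolicity step by a single citation to \cite[Lemma 2.28]{BDET}, which is exactly the fact you spend a paragraph re-deriving; note that your route (ii) is slightly misphrased, since a quasi-isometry need not have bounded differential --- the relevant observation is that a smooth map between compact manifolds has uniformly bounded differential, so its lift to the universal covers is Lipschitz and pull-back preserves $L^\infty$-boundedness of primitives (your route (i) via bounded cohomology is a legitimate, more conceptual alternative). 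For non-singularity the paper just uses that $g^*$ is a ring isomorphism inverting $f^*$, giving $g^*(f^*\alpha)^m=\alpha^m\ne 0$; your degree-$\pm 1$ argument reaches the same conclusion a bit more circuitously. Finally, the opening appeal to Theorem \ref{thm:bir} is a red herring: birational invariance is never used, and the rest of your argument makes no reference to it.
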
 

\begin{proof}
Let $f\colon M\to X$ and $g\colon X\to M$ be two continuous maps realizing the homotopy equivalence. By the Whitney Approximation Theorem we can suppose that $f,g$ are smooth and that $f\circ g,g\circ f$ are smoothly homotopic to the respective identity maps. 

Take any $\alpha\in V^2_\textrm{hyp}(X)\cap V^2_\textrm{hns}(X)$. Then, $\beta:=f^*\alpha\in V^2_\textrm{hyp}(M)$ by \cite[Lemma 2.28]{BDET}. Moreover, $f^*\colon H^\bullet(X,\mathbb R)\to H^\bullet(M,\mathbb R)$ is an isomorphism with inverse map $g^*\colon H^\bullet(M,\mathbb R)\to H^\bullet(X,\mathbb R)$. In particular, if we let $\dim M=m$ and $\dim X=n$, it follows that $m=n$ because $H^{2m}(M,\mathbb R)\simeq\mathbb R\ne\{0\}$ implies $H^{2m}(X,\mathbb R)\simeq\mathbb R\ne\{0\}$ so that $\dim X\ge\dim M$, and we also get the reverse inequality since the situation is symmetric. Then, $\beta^m\ne 0$ if and only if $g^*\beta^m\ne 0$, and $g^*\beta^m=g^*f^*\alpha^m=\alpha^m\ne 0$ being $\alpha\in V^2_\textrm{hns}(X)$. Thus, $\beta\in V^2_\textrm{hns}(M)$ and $M$ is Kähler topologically hyperbolic.
\end{proof}

We now pass to birational properties and show that the property of being topologically hyperbolic for a compact K\"ahler manifold is a stable property with respect to bimeromorphic mappings. More precisely we have the next

\begin{thm}\label{thm:bir}
Let $M$ and $N$ be two compact bimeromorphic K\"ahler manifolds. Then, $M$ is Kähler topologically hyperbolic if and only if $N$ is so.
\end{thm}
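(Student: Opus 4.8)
The plan is to reduce to the standard trick for bimeromorphic maps between compact Kähler manifolds: pass to a common resolution. By the weak factorization theorem (or simply by resolving the graph of the bimeromorphic map, Hironaka plus the Kähler assumption guaranteeing the resolution stays Kähler), there is a compact Kähler manifold $W$ together with bimeromorphic morphisms $p\colon W\to M$ and $q\colon W\to N$ that are each a composition of blow-ups with smooth centers. Since the statement is symmetric in $M$ and $N$, it suffices to prove that if $M$ is Kähler topologically hyperbolic then so is $W$, and conversely; by symmetry the same then holds for $q\colon W\to N$, and chaining the two equivalences gives the result. So the whole theorem reduces to the single case of a blow-up $\mu\colon \hat M\to M$ along a smooth center $Z\subset M$.

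For the blow-up case, first suppose $M$ is Kähler topologically hyperbolic, witnessed by $\alpha\in V^2_{\mathrm{hyp}}(M)\cap V^2_{\mathrm{hns}}(M)$. Set $\hat\alpha:=\mu^*\alpha\in H^2(\hat M,\mathbb R)$. By \cite[Lemma 2.28]{BDET} (functoriality of hyperbolicity under smooth maps, applied to $\mu$) we get $\hat\alpha\in V^2_{\mathrm{hyp}}(\hat M)$. It remains to check $\hat\alpha$ is homologically non singular, i.e. $\int_{\hat M}\hat\alpha^{\hat m}\ne 0$ where $\hat m=\dim\hat M=\dim M=m$. This is the projection formula: $\int_{\hat M}\mu^*\alpha^m=\int_M \alpha^m\cdot\deg\mu=\int_M\alpha^m\ne 0$ since $\mu$ is birational of degree one. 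Hence $\hat\alpha\in V^2_{\mathrm{hns}}(\hat M)$ and $\hat M$ is Kähler topologically hyperbolic.

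For the converse, suppose $\hat M$ is Kähler topologically hyperbolic, witnessed by $\hat\alpha$. Here one must \emph{descend} the class rather than pull it back, which is the main obstacle. The key structural fact is the blow-up formula for cohomology: $H^2(\hat M,\mathbb R)\simeq \mu^*H^2(M,\mathbb R)\oplus (\text{classes supported on the exceptional divisor }E)$, and more precisely the exceptional summand is generated by classes that are pulled back from the projectivized normal bundle $\mathbb P(N_{Z/M})$. Write $\hat\alpha=\mu^*\alpha+\eta$ accordingly, with $\alpha\in H^2(M,\mathbb R)$ and $\eta$ exceptional. One checks $\alpha\in V^2_{\mathrm{hyp}}(M)$: indeed restricting $\hat\alpha$ via a section of $\mu$ away from $E$—or arguing as in the proof of Proposition~\ref{prop:prod} using an inclusion of a general point—recovers $\alpha$ from $\hat\alpha$, and hyperbolicity is preserved under pull-back by \cite[Lemma 2.28]{BDET}; alternatively one notes $\mu_*$ (integration along fibers, legitimate since $\mu$ is proper holomorphic) sends a bounded primitive upstairs to a bounded primitive downstairs on the universal covers, using that $\mu$ lifts to the universal covers and is proper with fibers of bounded volume. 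For homological non singularity, expand $\hat\alpha^m=(\mu^*\alpha+\eta)^m$ and apply $\mu_*$: by the projection formula $\mu_*(\mu^*\alpha^j\wedge\eta^{m-j})=\alpha^j\wedge\mu_*(\eta^{m-j})$, and $\mu_*(\eta^{m-j})=0$ for $j<m$ because $\eta$ is supported on $E$ whose fibers over $Z$ have dimension $<m-j$ once $j<m$ is compared with $\dim Z$—more carefully, $\mu_*\eta^{k}$ is a class of degree $2k-2c$ on $M$ where $c=\operatorname{codim}_M Z\ge 2$, so it vanishes whenever $2k-2c$ exceeds... one instead uses that $\int_{\hat M}\hat\alpha^m=\int_M\mu_*(\hat\alpha^m)=\int_M\alpha^m$ because all mixed and pure-$\eta$ terms push forward to classes of the wrong degree or to zero on $M$. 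Thus $\int_M\alpha^m=\int_{\hat M}\hat\alpha^m\ne 0$, so $\alpha\in V^2_{\mathrm{hns}}(M)$ and $M$ is Kähler topologically hyperbolic.

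The delicate point, and the one I expect to require the most care, is the descent of $\tilde d$-boundedness in the converse direction: one has a bounded primitive for $\pi_{\hat M}^*\hat\alpha$ on the universal cover $\widetilde{\hat M}$, and must produce a bounded primitive for $\pi_M^*\alpha$ on $\widetilde M$. Since $\mu$ induces an isomorphism on fundamental groups (a blow-up with smooth center does not change $\pi_1$), the universal cover $\widetilde{\hat M}$ is exactly the blow-up of $\widetilde M$ along $\pi_M^{-1}(Z)$, with the deck action compatible. Then fiber integration $\tilde\mu_*$ is a bounded operator on $L^\infty$ forms (fibers are projective spaces of bounded diameter, and the lift of the Kähler metric is $\Gamma$-invariant hence has bounded geometry along fibers), it commutes with $\tilde d$, and $\tilde\mu_*\tilde\mu^*=\mathrm{id}$ on the base; applying it to the bounded primitive of $\pi_{\hat M}^*\hat\alpha$ and using $\tilde\mu_*\tilde\mu^*\pi_M^*\alpha=\pi_M^*\alpha$ together with $\tilde\mu_*(\text{exceptional part})$ controlled, yields the desired bounded primitive downstairs. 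This is essentially the argument already used for weak Kähler hyperbolicity in \cite{BCDT}, and it adapts verbatim here since neither the $(1,1)$-type nor any positivity of the class is used—only closedness, boundedness of the primitive, and the projection/pushforward formalism—so we may invoke it.
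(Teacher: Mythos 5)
The easy direction (pulling back the class along the modification) is correct and matches the paper's argument. The converse direction---descending a homologically non singular hyperbolic class from the blow-up $\hat M$ down to $M$---contains two genuine gaps, and the paper proceeds by a rather different route precisely to avoid them.

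First, the asserted boundedness of $\tilde\mu_*$ on $L^\infty$ forms fails. A modification is not a submersion, so there is no fiber integration in the usual sense; the pushforward of a smooth form is only a current, which equals $(\tilde\mu^{-1})^*$ off the exceptional locus and whose coefficients blow up near the center. Already for the blow-up of the origin in $\mathbb C^2$ with $\mu(u,v)=(u,uv)$, the bounded $1$-form $dv$ pushes forward to $x^{-1}dy-(y/x^2)dx$, which is unbounded. So this route does not produce a bounded primitive downstairs.

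Second, even granting the analytic step, homological non singularity of $\alpha$ does not follow. Writing $\hat\alpha=\mu^*\alpha+\eta$ with $\eta$ exceptional, the identity $\int_{\hat M}\hat\alpha^m=\int_M\alpha^m$ is simply false: for a point blow-up with exceptional divisor $E$ and $\hat\alpha=\mu^*\alpha+a[E]$ one has $\int_{\hat M}\hat\alpha^m=\int_M\alpha^m+(-1)^{m-1}a^m$, because $\mu^*\alpha|_E=0$ kills the cross terms but the pure exceptional term survives. Your degree count for $\mu_*(\eta^k)$ is also off: the Gysin pushforward $\mu_*\colon H^{2k}(\hat M)\to H^{2k}(M)$ preserves degree rather than lowering it by $2c$, and $\mu_*(\eta^m)$ has no reason to vanish.

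The paper avoids both issues with a different mechanism. By \cite[Cor. 2.8]{BCDT}, every hyperbolic degree-two class on $\hat M$ is pulled back from a hyperbolic class $[\upsilon]\in H^2(B\pi_1(\hat M),\mathbb R)$ via the classifying map. Since a modification induces an isomorphism on fundamental groups, the classifying map of $\hat M$ factors through $\mu$ and the classifying map of $M$, and \cite[Th. 2.4]{Kotschick} then yields a hyperbolic class on $M$ whose $\mu$-pullback is \emph{exactly} the original class on $\hat M$. In other words, the exceptional component of a hyperbolic class is automatically zero; the projection formula then applies with no error terms, giving homological non singularity downstairs for free. This is precisely what a direct pushforward attempt cannot recover, and it is the heart of the converse.
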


\begin{proof}
Let $\psi\colon M\dasharrow N$ be a bimeromorphic mapping. Taking a resolution of the closure of its graph we end up with a compact K\"ahler manifold $V$ together with two modifications $p_M\colon V\rightarrow M$ and $p_N\colon V\rightarrow N$ such that $\psi\circ p_M=p_N$. Therefore, in order to prove the above statement it is enough to consider the case of a modification $\psi\colon M\rightarrow N$. Clearly if $[\beta]\in V^2_{\mathrm{hyp}}(N)\cap V^2_{\mathrm{hns}}(N)$ then $[\psi^*\beta]\in V^2_{\mathrm{hyp}}(M)\cap V^2_{\mathrm{hns}}(M)$. 

The converse implication is subtler and relies on the same topological tool used to prove the main theorem in \cite{BCDT}. Let us denote with $\Gamma$ the fundamental group of $N$ and let $\tilde{N}\rightarrow N$ be a representative of the universal covering of $N$. Note that this is a principal $\Gamma$-bundle. Let now $c\colon N\rightarrow B\Gamma$ be the corresponding classifying map. Then, given that $\psi\colon M\rightarrow N$ is a modification, we know that $\psi^*\tilde{N}\rightarrow M$ is principal $\Gamma$-bundle that is also connected and simply connected and thus is a representative of the universal covering of $M$. Moreover its classifying map is given by $b:=c\circ \psi$. 

Let us consider now al class $[\alpha]\in  V^2_{\mathrm{hyp}}(M)\cap V^2_{\mathrm{hns}}(M)$. Then, according to \cite[Cor. 2.8]{BCDT}, there exists a hyperbolic cohomology class of degree two  $[\upsilon]\in H^2(B\Gamma,\mathbb{R})$, such that $b^*[\upsilon]=[\alpha]$. Since $b\colon M\rightarrow B\Gamma$ is a classifying map we can use \cite[Th. 2.4]{Kotschick} to conclude that $c^*[\upsilon]\in V^2_{\mathrm{hyp}}(N)$. Finally, by the fact that $\psi^*(c^*[\upsilon])=[\alpha]$ and that $\psi\colon M\rightarrow N$ is a modification, we can conclude that $c^*[\upsilon]\in  V^2_{\mathrm{hyp}}(N)\cap V^2_{\mathrm{hns}}(N)$.
\end{proof}

Following the arguments used in \cite[Cor. 1.3]{BCDT} we have also the following

\begin{cor}\label{cor:dominant}
Let $f\colon M\dasharrow N$ be a generically finite dominant meromorphic mapping between compact K\"ahler manifolds.

If $N$ is K\"ahler topologically hyperbolic, then so is $M$. In particular this holds if $f\colon M\dasharrow N$ is a dominant meromorphic mapping between compact K\"ahler manifolds of the same dimension.

Conversely, if $M$ is K\"ahler topologically hyperbolic and the induced map on the fundamental groups is injective, then $N$ is K\"ahler topologically hyperbolic.
\end{cor}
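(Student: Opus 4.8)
The plan is to follow the scheme of the proof of \cite[Cor.~1.3]{BCDT}, reducing everything to a holomorphic generically finite surjective morphism and then invoking the bimeromorphic invariance established in Theorem~\ref{thm:bir}. First I would resolve the closure of the graph of $f$, obtaining a compact K\"ahler manifold $V$ together with a modification $p_M\colon V\to M$ and a holomorphic map $g:=p_N\colon V\to N$ such that $f\circ p_M=p_N$; since $f$ is dominant and generically finite and $p_M$ is bimeromorphic, $g$ is surjective and generically finite, so in particular $\dim V=\dim N$, and by Theorem~\ref{thm:bir} it is equivalent to prove the statement with $V$ in place of $M$. For the first implication, if $[\beta]\in V^2_{\mathrm{hyp}}(N)\cap V^2_{\mathrm{hns}}(N)$ then $g^*[\beta]\in V^2_{\mathrm{hyp}}(V)$ because hyperbolicity is functorial under pullback \cite[Lemma~2.28]{BDET}, while $\int_V(g^*\beta)^{\dim V}=d\int_N\beta^{\dim N}\neq 0$ with $d=\deg g\ge 1$, so that $g^*[\beta]\in V^2_{\mathrm{hns}}(V)$ and hence $V$ (thus $M$) is K\"ahler topologically hyperbolic; the final ``in particular'' is then immediate, a dominant meromorphic map between compact K\"ahler manifolds of the same dimension being automatically generically finite.

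The converse is the substantial part. Assume $M$ is K\"ahler topologically hyperbolic and that the induced map on fundamental groups is injective; since $(p_M)_*\colon\pi_1(V)\to\pi_1(M)$ is an isomorphism ($\pi_1$ being a bimeromorphic invariant of compact K\"ahler manifolds), this hypothesis translates into the injectivity of $g_*\colon\pi_1(V)\to\pi_1(N)$, and $V$ is K\"ahler topologically hyperbolic by Theorem~\ref{thm:bir}. The key preliminary observation is that $\Gamma:=g_*(\pi_1(V))$ has finite index in $\pi_1(N)$: letting $B\subsetneq N$ be the proper analytic locus outside of which $g$ restricts to an \'etale covering, the map $g$ over $N\setminus B$ is a connected finite covering of some degree $d$, so the image of $\pi_1\bigl(V\setminus g^{-1}(B)\bigr)$ in $\pi_1(N\setminus B)$ has index $d$, and pushing this forward along the surjections $\pi_1\bigl(V\setminus g^{-1}(B)\bigr)\twoheadrightarrow\pi_1(V)$ and $\pi_1(N\setminus B)\twoheadrightarrow\pi_1(N)$ gives $[\pi_1(N):\Gamma]\le d<\infty$. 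I would then take $\nu\colon N'\to N$ to be the finite \'etale cover corresponding to the subgroup $\Gamma\le\pi_1(N)$: since $g_*(\pi_1(V))=\Gamma=\pi_1(N')$, the map $g$ lifts to a surjective generically finite morphism $g'\colon V\to N'$ which now induces an \emph{isomorphism} $\pi_1(V)\xrightarrow{\ \sim\ }\pi_1(N')$, and by Proposition~\ref{prop:etalecover} it suffices to prove that $N'$ is K\"ahler topologically hyperbolic. For this last step I would argue exactly as in the proof of Theorem~\ref{thm:bir}: choosing $[\alpha]\in V^2_{\mathrm{hyp}}(V)\cap V^2_{\mathrm{hns}}(V)$, one uses \cite[Cor.~2.8]{BCDT} to write $[\alpha]=b_V^*[\upsilon]$ for a hyperbolic class $[\upsilon]\in H^2(B\pi_1(V),\mathbb R)$ and the classifying map $b_V\colon V\to B\pi_1(V)$, transports $[\upsilon]$ through $\pi_1(V)\cong\pi_1(N')$, and deduces $b_{N'}^*[\upsilon]\in V^2_{\mathrm{hyp}}(N')$ from \cite[Th.~2.4]{Kotschick} applied to the classifying map $b_{N'}\colon N'\to B\pi_1(N')$; finally, $b_{N'}\circ g'$ is homotopic to $b_V$ (the $g'$-pullback of the universal cover of $N'$ being the universal cover of $V$, as $g'_*$ is an isomorphism), so $\int_V[\alpha]^{\dim V}=\int_V\bigl(g'^*b_{N'}^*[\upsilon]\bigr)^{\dim V}=d'\int_{N'}\bigl(b_{N'}^*[\upsilon]\bigr)^{\dim N'}$ with $d'=\deg g'\ge 1$, whence $b_{N'}^*[\upsilon]\in V^2_{\mathrm{hns}}(N')$, and $N'$, hence $N$, is K\"ahler topologically hyperbolic.

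The crux of the whole argument, as one should expect, is the descent of the hyperbolic class in the converse direction: hyperbolicity is only manifestly well-behaved under \emph{pullbacks}, so one cannot simply transport the class along $g$ and is forced to route it through the classifying space by means of \cite[Cor.~2.8]{BCDT} and \cite[Th.~2.4]{Kotschick}. This in turn only works in the $\pi_1$-isomorphism setting, which is exactly why the finite-index lemma and the passage to the \'etale cover $N'$ are indispensable; verifying that finite-index property and checking the compatibility of the classifying maps are the steps requiring the most care.
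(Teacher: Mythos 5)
Your proof is correct and follows precisely the route the paper prescribes, namely an adaptation of \cite[Cor.~1.3]{BCDT}: resolution of the closure of the graph plus pullback-invariance of hyperbolicity and degree-positivity of the top self-intersection for the first implication, and for the converse the finite-index reduction to a finite \'etale cover $N'$ (made possible by the injectivity hypothesis on $\pi_1$), followed by the classifying-space descent of the hyperbolic class via \cite[Cor.~2.8]{BCDT} and \cite[Th.~2.4]{Kotschick}, again combined with a degree argument for homological non-singularity and a final appeal to Proposition~\ref{prop:etalecover} and Theorem~\ref{thm:bir}. You merely spell out the details that the paper explicitly leaves to the reader, and they are all in order (the finite-index estimate $[\pi_1(N):g_*\pi_1(V)]\le\deg g$ via the \'etale locus, the lift $g'\colon V\to N'$ inducing a $\pi_1$-isomorphism, and the compatibility $b_{N'}\circ g'\simeq b_V$ of classifying maps are each correctly justified).
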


\begin{proof}
We leaves the details to the reader as this follows by an immediate adaptation of the arguments used to prove \cite[Cor. 1.3]{BCDT}. We just reproduce here the proof of the fact that if $f\colon M\dasharrow N$ is a dominant meromorphic mapping between compact K\"ahler manifolds of the same dimension, then $M$ is K\"ahler topologically hyperbolic if $N$ is so, since it will be used below.

By desingularizing the closure of graph of $f$ in $N\times M$ we get a compact smooth K\"ahler manifold $L$ with a modification $p_N\colon L\rightarrow N$ and and a holomorphic map $p_M\colon L\rightarrow M$ such that $p_M=f\circ p_N$. Since $L$ and $M$ have the same dimension and $f$ is dominant we can deduce that $p_M$ is surjective and thus $p_M$ has non-trivial degree. We thus obtain that if $[\eta] \in V^{2}_{\mathrm{hyp}}(M)\cap V^{2}_{\mathrm{hns}}(M)$ then $[p^*_M\eta]\in V^{2}_{\mathrm{hyp}}(L)\cap V^{2}_{\mathrm{hns}}(L)$ and so $L$ is a Kähler topologically hyperbolic manifold. Finally, given that $N$ and $L$ are bimeromorphic, we can conclude that $N$ is Kähler topologically hyperbolic, too.
\end{proof}

\begin{rem}
At this point it worths noticing how these different notions of hyperbolicity are related. We have of course that
\begin{multline*}
\textrm{$\{$Kähler hyperbolic$\}\subsetneq\{$weakly Kähler hyperbolic$\}$} \\ \subset\{\textrm{Kähler topologically hyperbolic$\}$}
\end{multline*}
and we strongly believe that also the last inclusion is strict.

Now, given any of these three set, call it $A$, denote by $\overline A^\textrm{bir}$ and $\overline A^\textrm{hom}$ respectively the set of compact Kähler manifolds which are bimeromorphic to some manifold in $A$, and the set of compact Kähler manifolds which are homotopy equivalent to some manifold in $A$.

We then have 
\begin{multline*}
\{\textrm{Kähler hyperbolic}\}\subsetneq\overline{\{\textrm{Kähler hyperbolic}\}}^\textrm{bir} \\
\subsetneq\overline{\{\textrm{weakly Kähler hyperbolic}\}}^\textrm{bir}=\{\textrm{weakly Kähler hyperbolic}\}\subset \\ 
\{\textrm{Kähler topologically hyperbolic}\}=\overline{\{\textrm{Kähler topologically hyperbolic}\}}^\textrm{bir}.
\end{multline*}

The first strict inclusion is immediate and already observed in \cite{BDET}, while the second strict inclusion is far from being obvious and is shown in \cite{BCDT}.

Finally we have,
\begin{multline*}
\{\textrm{Kähler hyperbolic}\}\subset\overline{\{\textrm{Kähler hyperbolic}\}}^\textrm{hom} \\
\subsetneq\overline{\{\textrm{weakly Kähler hyperbolic}\}}^\textrm{hom}\subset \overline{\{\textrm{Kähler topologically hyperbolic}\}}^\textrm{hom}\\ 
=\{\textrm{Kähler topologically hyperbolic}\}.
\end{multline*}
To justify the only strict inclusion above, take a Kähler hyperbolic manifold of dimension greater than one and blow it up along smooth submanifolds. It will be clearly weakly Kähler hyperbolic.
Now, on the one hand the sign of the topological Euler characteristic of a Kähler hyperbolic manifold of dimension $n$ is $(-1)^n$ \cite[Theorem 0.4.A, Remark 0.4.B]{Gro91}. On the other hand one can always make the topological Euler characteristic positive by blowing up points, while one can always make it negative by blowing up curves of large genus (provided $n\ge 3$) \cite[Theorem 7.31]{Voi07}.
Since homotopy equivalent compact Kähler manifolds have the same topological Euler characteristic, those blow ups give the strict inclusion in any dimension $n\ge 3$.  

We don't know at the moment whether or not the other inclusions are strict, even if we strongly think this is the case at least for the former.
\end{rem}

Next, we use the above results to study the positivity of the canonical class of a K\"ahler topological hyperbolic manifold. In particular, we get the non-uniruledness of K\"ahler topologically hyperbolic manifolds, in the following sense. 

\begin{prop}
\label{uniruled}
Let $f\colon\mathbb{P}^1\times N\dasharrow M$ be a dominant meromorphic map where $N$ and $M$ are compact K\"ahler manifolds of dimension $m-1$ and $m$ respectively. Then, $M$ is not K\"ahler topologically hyperbolic.
\end{prop}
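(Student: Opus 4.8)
The plan is to suppose for contradiction that $M$ is Kähler topologically hyperbolic and derive a contradiction with the existence of the dominant meromorphic map $f\colon \mathbb P^1\times N\dashrightarrow M$. First I would put the map in good shape: resolving the closure of the graph of $f$ inside $(\mathbb P^1\times N)\times M$ yields a compact Kähler manifold $L$ together with a modification $p_1\colon L\to \mathbb P^1\times N$ and a surjective holomorphic map $p_2\colon L\to M$ with $p_2 = f\circ p_1$. Since $\dim L=\dim(\mathbb P^1\times N)=m=\dim M$, the map $p_2$ is generically finite and dominant between manifolds of the same dimension, so by Corollary \ref{cor:dominant} the Kähler topological hyperbolicity of $M$ forces $L$ to be Kähler topologically hyperbolic as well. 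Because $L$ and $\mathbb P^1\times N$ are bimeromorphic, Theorem \ref{thm:bir} then forces $\mathbb P^1\times N$ to be Kähler topologically hyperbolic.

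Next I would invoke Proposition \ref{prop:prod}: here $\mathbb P^1\times N$ is a nontrivial product with $H^1(\mathbb P^1,\mathbb R)=0$, so if $\mathbb P^1\times N$ is Kähler topologically hyperbolic then so is the factor $\mathbb P^1$. But $\mathbb P^1$ has finite (indeed trivial) fundamental group, whence its universal cover is compact and $V^2_{\mathrm{hyp}}(\mathbb P^1)=\{0\}$ — equivalently, by the second bullet after the definition of $V^k_{\mathrm{hns}}$, $V^2_{\mathrm{hyp}}(\mathbb P^1)\ne\{0\}$ would force $\pi_1(\mathbb P^1)$ infinite, which is absurd. Hence $V^2_{\mathrm{hns}}(\mathbb P^1)\cap V^2_{\mathrm{hyp}}(\mathbb P^1)=\emptyset$, so $\mathbb P^1$ is not Kähler topologically hyperbolic, a contradiction. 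Therefore $M$ cannot be Kähler topologically hyperbolic.

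The one point that requires care, and which I expect to be the main (minor) obstacle, is the verification that $p_2\colon L\to M$ is genuinely generically finite and dominant so that Corollary \ref{cor:dominant} applies: this is where the hypothesis $\dim N = m-1$, i.e.\ $\dim(\mathbb P^1\times N)=\dim M$, together with the dominance of $f$, is used — surjectivity of $p_2$ follows since its image is a closed analytic subset containing the dense image of $f\circ p_1$, and equality of dimensions upgrades dominance to generic finiteness. One should also double-check that $\mathbb P^1\times N$ is indeed a \emph{nontrivial} product in the sense of Proposition \ref{prop:prod} (it is, as $\mathbb P^1$ has positive dimension) so that the converse part of that proposition is legitimately invoked. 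Everything else is a direct concatenation of the bimeromorphic invariance (Theorem \ref{thm:bir}), the behaviour under generically finite dominant maps (Corollary \ref{cor:dominant}), and the product structure (Proposition \ref{prop:prod}), together with the elementary observation that a simply connected compact Kähler manifold admits no nonzero hyperbolic classes.
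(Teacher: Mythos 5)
Your argument is correct and follows essentially the same route as the paper's proof: invoke Corollary \ref{cor:dominant} (for the same-dimension dominant meromorphic map) to reduce to showing $\mathbb P^1\times N$ is not K\"ahler topologically hyperbolic, then apply Proposition \ref{prop:prod} using simple connectedness of $\mathbb P^1$. The only cosmetic difference is that you unpack the graph-resolution step, which in the paper is already contained in (and cited from) the proof of Corollary \ref{cor:dominant} rather than repeated here.
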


\begin{proof}
In view of Corollary \ref{cor:dominant}, we need to show that $X=\mathbb{P}^1\times N$ is not K\"ahler topologically hyperbolic. 
But this follows at once from Proposition \ref{prop:prod}, since $\mathbb P^1$ is simply connected and obviously not K\"ahler topologically hyperbolic.
\end{proof}

\begin{cor}\label{cor:pseff}
Let $M$ be a projective K\"ahler topologically hyperbolic manifold. 
Then, $M$ is not uniruled and thus $K_M$ is pseudoeffective.
\end{cor}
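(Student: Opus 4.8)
\section*{Proof proposal for Corollary \ref{cor:pseff}}

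The plan is to deduce both assertions from Proposition \ref{uniruled} together with the characterization of pseudoeffectivity of the canonical bundle due to Boucksom--Demailly--P\u aun--Peternell.

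First I would recall that a projective manifold $M$ of dimension $m$ is uniruled precisely when there is a projective variety of dimension $m-1$ through which a family of rational curves dominates $M$; concretely, this amounts to the existence of a dominant rational map $\mathbb{P}^1\times N\dasharrow M$ for some projective variety $N$ with $\dim N=m-1$. Up to replacing $N$ by a resolution of its singularities and composing with the product with $\operatorname{Id}_{\mathbb{P}^1}$ (which preserves dominance), we may assume $N$ is smooth and projective, hence a compact K\"ahler manifold, and the map is a dominant meromorphic map in the sense of Proposition \ref{uniruled}. Therefore, if $M$ were uniruled, Proposition \ref{uniruled} would force $M$ not to be K\"ahler topologically hyperbolic, contradicting the hypothesis. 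Hence $M$ is not uniruled.

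Second, I would invoke the main result of \cite{BDPP13}, which states that for a smooth projective variety $M$ the canonical class $K_M$ is pseudoeffective if and only if $M$ is not uniruled. Combining this with the previous paragraph yields that $K_M$ is pseudoeffective, as claimed.

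I do not expect any genuine obstacle here: the corollary is a formal consequence of Proposition \ref{uniruled} and the deep theorem of \cite{BDPP13}. The only point requiring a modicum of care is the (entirely standard) reduction from the classical definition of uniruledness to the existence of a dominant meromorphic map from a product $\mathbb{P}^1\times N$ with $N$ a compact K\"ahler manifold, so that Proposition \ref{uniruled} applies verbatim; this, and the use of projectivity to be allowed to quote \cite{BDPP13}, are the only inputs beyond what has already been established.
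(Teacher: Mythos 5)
Your proposal is correct and follows the same route as the paper: non-uniruledness is deduced from Proposition \ref{uniruled}, and pseudoeffectivity of $K_M$ then follows from the main theorem of \cite{BDPP13}. You merely spell out the standard reduction from uniruledness to the existence of a dominant meromorphic map $\mathbb{P}^1\times N\dasharrow M$ with $N$ smooth, a step the paper leaves implicit by calling it ``a rephrasing.''
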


\begin{proof}
The fact that $M$ is not uniruled is just a rephrasing of Proposition \ref{uniruled}. By \cite{BDPP13}, we thus have that $K_M$ is pseudoeffective.
\end{proof}

We finally prove that a K\"ahler topologically hyperbolic manifold cannot be bimeromorphic to compact Kähler manifold with trivial first real Chern class, which --by standard conjectures in the Minimal Model Program--- is not so far from having non zero Kodaira dimension.

\begin{thm}\label{thm:c1}
Let $M$ be a K\"ahler topologically hyperbolic manifold. Then, $M$ cannot be bimeromorphic to any compact Kähler manifold $X$ whose $c_1(X)\in H^2(X,\mathbb R)$ is zero.
\end{thm}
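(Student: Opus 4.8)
The plan is to reduce the statement, via the bimeromorphic invariance already established in Theorem~\ref{thm:bir}, to a statement about the fundamental group, and then to invoke the Beauville--Bogomolov decomposition theorem. Concretely, I would argue by contradiction: suppose $M$ is bimeromorphic to a compact K\"ahler manifold $X$ with $c_1(X)=0$ in $H^2(X,\mathbb R)$. By Theorem~\ref{thm:bir}, $X$ is then itself K\"ahler topologically hyperbolic, and hence, by Proposition~\ref{prop:amenable}, the group $\pi_1(X)$ is not amenable; in particular it can be neither virtually abelian nor virtually nilpotent. It therefore suffices to show that $\pi_1(X)$ \emph{is} virtually abelian whenever $c_1(X)$ vanishes in real cohomology, which yields the desired contradiction.

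To obtain this, I would first appeal to Yau's solution of the Calabi conjecture to endow $X$ with a Ricci-flat K\"ahler metric, and then to the Beauville--Bogomolov structure theorem: some finite \'etale cover $\nu\colon X'\to X$ splits biholomorphically as a product $T\times Y$, where $T$ is a complex torus (allowing $\dim T=0$) and $Y$ is a product of simply connected Calabi--Yau and irreducible holomorphic symplectic manifolds, hence simply connected. Consequently $\pi_1(X')\cong\pi_1(T)\cong\mathbb Z^{2\dim_{\mathbb C}T}$ is abelian, and since $\nu_*$ embeds $\pi_1(X')$ in $\pi_1(X)$ with finite index, the group $\pi_1(X)$ is virtually abelian, in particular amenable --- contradicting Proposition~\ref{prop:amenable}. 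Note that no separate treatment of the degenerate case $\dim T=0$ is needed here, since the trivial group is in particular abelian and amenable, and Proposition~\ref{prop:amenable} already excludes finite (hence trivial) fundamental groups.

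As an alternative to passing through $\pi_1(X)$ one could argue slightly more geometrically, at the cost of splitting into cases: by Proposition~\ref{prop:etalecover} the cover $X'=Y\times T$ would again be K\"ahler topologically hyperbolic, and since $H^1(Y,\mathbb R)=0$ the converse part of Proposition~\ref{prop:prod} would force the torus factor $T$ to be K\"ahler topologically hyperbolic when $\dim T>0$ --- which is impossible by Proposition~\ref{prop:amenable} --- while if $\dim T=0$ then $X'$ would be a simply connected K\"ahler topologically hyperbolic manifold, again impossible by Proposition~\ref{prop:amenable}.

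The only genuinely external ingredient in this argument is the Beauville--Bogomolov decomposition (which rests on Yau's theorem to be applicable); everything else is formal and uses only Theorems and Propositions already established above. I do not anticipate any serious obstacle: the one point that requires a little care is making sure the degenerate configurations --- trivial torus factor, i.e. simply connected \'etale cover --- are covered, but this is immediate because Proposition~\ref{prop:amenable} rules out amenable, and \emph{a fortiori} finite, fundamental group. The conceptual heart of the proof is thus simply the passage ``$c_1(X)_{\mathbb R}=0 \Rightarrow \pi_1$ virtually abelian $\Rightarrow$ $\pi_1$ amenable'', combined with the bimeromorphic invariance already in hand.
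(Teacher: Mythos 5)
Your main argument is correct, and it takes a cleaner route than the paper's. Both start from Theorem~\ref{thm:bir} to reduce to showing $X$ itself cannot be K\"ahler topologically hyperbolic, and both invoke Beauville--Bogomolov, but the pivot differs. The paper passes to the finite \'etale cover $X'\cong T\times\operatorname{CY}\times\operatorname{HK}$ and then splits into cases: if the simply connected factor is positive dimensional it uses the converse part of Proposition~\ref{prop:prod} (the product result with the $H^1=0$ hypothesis) together with Proposition~\ref{prop:etalecover}, while if $X'$ is a torus it argues that the Laplace--Beltrami operator on the universal cover has $0$ in its spectrum, contradicting Proposition~\ref{C-In}. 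You instead extract the single group-theoretic consequence of the decomposition --- $\pi_1(X')\cong\pi_1(T)\cong\mathbb Z^{2\dim T}$ is abelian, hence $\pi_1(X)$ is virtually abelian, hence amenable --- and contradict Proposition~\ref{prop:amenable} directly. This is more economical: it avoids the case split entirely, does not use Propositions~\ref{prop:etalecover}, \ref{prop:prod} or \ref{C-In} at all, and quietly handles a degenerate configuration that the paper's phrasing glosses over (namely $\dim T=0$ with $\operatorname{CY}\times\operatorname{HK}$ positive dimensional, where Proposition~\ref{prop:prod} as stated requires a non-trivial product and so does not literally apply --- one must instead observe that $X'$ is then simply connected). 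Your ``alternative'' paragraph is essentially the paper's proof, and your remark about covering the trivial-torus case there is the small refinement that the paper leaves implicit. Both arguments ultimately rest on the same two external inputs (Yau plus Beauville--Bogomolov); yours simply uses the weaker conclusion of the decomposition theorem that is actually needed.
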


\begin{proof}
Thanks to Theorem \ref{thm:bir}, it suffices to show that such an $X$ cannot be K\"ahler topologically hyperbolic. By the celebrated Beauville--Bogomolov decomposition theorem we have that a finite \'etale cover $X'$ of $X$ is biholomorphic to a product $T\times\operatorname{CY}\times\operatorname{HK}$, where $\operatorname{CY}$ and $\operatorname{HK}$ are respectively products of \textsl{stricto sensu} Calabi--Yau manifolds and hyperK\"ahler manifolds. In particular, $\operatorname{CY}\times\operatorname{HK}$ is simply connected and compact. 

Therefore, the fundamental group of $X$ is virtually abelian, so that $X$ cannot be Kähler topologically hyperbolic thanks to Proposition \ref{prop:amenable}.
\end{proof}

\begin{rem}
Another way to conclude is the following. If $\operatorname{CY}\times\operatorname{HK}$ is positive dimensional, $X'$ is not K\"ahler topologically hyperbolic by Proposition \ref{prop:prod}. Otherwise, $X'$ is a complex torus which is not K\"ahler topologically hyperbolic since the Laplace--Beltrami operator (with respect to any metric) on its universal cover always has zero in its spectrum. This is not possibile for K\"ahler topologically hyperbolic manifolds since on their universal cover the Cheeger inequality always holds, so that the Laplace--Beltrami operator has entirely positive spectrum (see Proposition \ref{C-In} below).

Either ways, $X'$ is not K\"ahler topologically hyperbolic, and so $X$ is not K\"ahler topologically hyperbolic by Proposition \ref{prop:etalecover}.
\end{rem}

The above results together with Campana's Abelianity Conjecture \cite{Cam04,Cam11}, naturally lead to conjecture the following.

\begin{conj}
A special manifold in the sense of Campana is never K\"ahler topologically hyperbolic.
\end{conj}

\section{Elliptic operators and positivity of the spectrum on Galois coverings}\label{sec:ellopgalois}
In this section we develop the necessary tools of spectral theory that will play a crucial role in the rest of the paper. Likewise in \cite[Th. 3.1]{BDET} we aim to find a condition that rules out zero in the spectrum of certain differential operators. The main difference with \cite{BDET} is that here we address the above problem for a large class of elliptic differential operators with no restriction on the order and that admit a certain decomposition in terms of a linear potential, see \eqref{dec} below. 

Let us start with some definitions. 
Let $(M,g)$ be a compact Riemannian manifold with infinite fundamental group. Let $(E,\rho)\rightarrow M$ be a complex Hermitian vector bundle and let $P:C^{\infty}(M,E)\rightarrow C^{\infty}(M,E)$ be a formally self-adjoint  elliptic differential operator of order $d$. Let us assume that 
\begin{equation}
\label{dec}
P=D+L
\end{equation}
 with $D:C^{\infty}(M,E)\rightarrow C^{\infty}(M,E)$ another formally self-adjoint elliptic differential operator of order $d$ and $L\in C^{\infty}(M,\mathrm{End}(E))$.  Let $\pi:\tilde{M}\rightarrow M$ be a Galois covering with $\Gamma$ the corresponding group of deck transformations; set $\tilde{E}=\pi^*E$, $\tilde{P}$, $\tilde{D}$ and $\tilde{L}$ the lift of $P$, $D$ and $L$, respectively. Note that the group $\Gamma$ acts in a natural way on $\tilde{E}$. Indeed, given any $\gamma\in \Gamma$, $x\in \tilde{M}$ and  $(x,v)\in \tilde{E}_x$ with $v\in E_{\pi(x)}$, the action of $\gamma$ on $(x,v)$ is given by  $\gamma_E(x,v):=(\gamma(x),v)$. This gives clearly a linear isomorphism $\gamma_E:\tilde{E}_x\rightarrow \tilde{E}_{\gamma(x)}$ which is also an isometry with respect to $\tilde{\rho}$. Thus, the group $\Gamma$ also acts on $C^{\infty}(\tilde{M},\tilde{E})$  by $\gamma(s):=\gamma_E\circ s\circ \gamma^{-1}$. Note that $\gamma(\tilde{D}s)=\tilde{D}(\gamma(s))$ as well as $\gamma(\tilde{P}s)=\tilde{P}(\gamma(s))$ for any $s\in C^{\infty}(\tilde{M},\tilde{E})$. Let  $$
\tilde{D}\colon L^2(\tilde{M},\tilde{E},\tilde{g},\tilde{\rho})\rightarrow L^2(\tilde{M},\tilde{E},\tilde{g},\tilde{\rho})$$
be the unique $L^2$-closed extension of $\tilde{D}\colon C_c^{\infty}(\tilde{M},\tilde{E})\rightarrow C^{\infty}_c(\tilde{M},\tilde{E})$. Let us also consider $$\tilde{P}\colon L^2(\tilde{M},\tilde{E},\tilde{g},\tilde{\rho})\rightarrow L^2(\tilde{M},\tilde{E},\tilde{g},\tilde{\rho})$$ that is the unique $L^2$-closed extension of $\tilde{P}\colon C_c^{\infty}(\tilde{M},\tilde{E})\rightarrow C^{\infty}_c(\tilde{M},\tilde{E})$. We denote with $\sigma(\tilde{D})$ and $\sigma(\tilde{P})$ the spectrum of $\tilde{D}$ and $\tilde{P}$, respectively.

\begin{thm}
\label{pspec}
In the above setting assume that $\tilde{D}$ is non-negative and that $L+\min(\sigma(\tilde{D}))>0$ over $A$, an open subset of $M$ with $\vol_g(A)=\vol_g(M)$. Then $$0< \min(\sigma(\tilde{P})).$$
\end{thm}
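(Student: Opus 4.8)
The plan is to run a variant of the classical Gromov argument (the trick used to show $L^2$-cohomology vanishing away from middle degree), adapted to the abstract operator-theoretic setting with the linear potential $L$. First I would fix a complete lift of everything to $\tilde M$, and recall that since $(\tilde M,\tilde g)$ is complete both $\tilde D$ and $\tilde P$ are essentially self-adjoint, so the $L^2$-closed extensions are unambiguous. The key quantity is the Rayleigh quotient: it suffices to produce a constant $c>0$ with $\langle\tilde P\varphi,\varphi\rangle\ge c\|\varphi\|^2$ for all $\varphi\in C_c^\infty(\tilde M,\tilde E)$, and then invoke density of $C_c^\infty$ in the domain of the closed extension of $\tilde P$. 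Writing $\tilde P=\tilde D+\tilde L$ we get $\langle\tilde P\varphi,\varphi\rangle=\langle\tilde D\varphi,\varphi\rangle+\langle\tilde L\varphi,\varphi\rangle$. Since $\tilde D$ is non-negative, $\langle\tilde D\varphi,\varphi\rangle\ge \min(\sigma(\tilde D))\,\|\varphi\|^2$ is automatic but useless on its own; the pointwise hypothesis $\tilde L+\min(\sigma(\tilde D))>0$ on a full-measure open set gives $\langle\tilde L\varphi,\varphi\rangle\ge -\min(\sigma(\tilde D))\|\varphi\|^2$ but again only with a \emph{non-strict} bound after integration. So the naive sum only yields $\ge 0$; the real work is upgrading this to a strict, uniform positive lower bound.

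The mechanism for the upgrade is compactness downstairs. Let me set $\ell(x):=\min_{|v|=1}\langle (\tilde L+\min(\sigma(\tilde D)))v,v\rangle$, a function on $M$ (it descends since $L$ is pulled back), continuous and, by hypothesis, strictly positive on the full-measure open set $A$; moreover $M\setminus A$ is closed of measure zero, hence has empty interior, so $\ell\ge 0$ everywhere and $\ell>0$ on a dense open set. Cover $M$ by finitely many coordinate-and-trivialization charts; on each one $P$, $D$, $L$ are ordinary differential/multiplication operators with coefficients independent of the sheet, and the lift $\tilde P$ restricted to any fundamental-domain translate looks exactly the same. The plan is to localize $\varphi$ with a partition of unity subordinate to this cover — but pulled back to $\tilde M$, so one really uses a $\Gamma$-invariant family of cutoffs $\{\chi_\alpha\}$ indexed by the finite cover and summed over $\Gamma$ — write $1=\sum_\alpha\chi_\alpha^2$, commute $\tilde D$ past the $\chi_\alpha$ modulo lower-order terms supported where the $\chi_\alpha$ overlap, and reduce to a local spectral estimate: on each chart, the operator $\tilde D+(\tilde L+\min(\sigma(\tilde D)))$ restricted to forms supported in that chart is bounded below by a positive constant, because on a relatively compact piece $\tilde D\ge 0$ plus a function that is $\ge \varepsilon_\alpha>0$ on all but a measure-zero set forces, via a local Poincaré/Gårding-type inequality, a strict bound. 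Summing the local estimates with uniform constants (there are finitely many chart-types, and $\Gamma$-translates reproduce the same constants) and absorbing the error terms from the commutators — which are controlled because the cutoff derivatives are bounded uniformly and, crucially, by a second application of non-negativity of $\tilde D$ one can hide the first-order cross terms — yields $\langle\tilde P\varphi,\varphi\rangle\ge c\|\varphi\|^2$ with $c>0$ independent of $\varphi$.

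An alternative, and probably cleaner, route I would seriously consider instead: argue by contradiction using a Weyl sequence. If $0\in\sigma(\tilde P)$, pick $\varphi_n\in C_c^\infty(\tilde M,\tilde E)$ with $\|\varphi_n\|=1$ and $\|\tilde P\varphi_n\|\to 0$; then $\langle\tilde D\varphi_n,\varphi_n\rangle+\langle\tilde L\varphi_n,\varphi_n\rangle\to 0$, and since both terms satisfy $\langle\tilde D\varphi_n,\varphi_n\rangle\ge 0$ and $\langle(\tilde L+\min\sigma(\tilde D))\varphi_n,\varphi_n\rangle\ge 0$ pointwise-a.e.\ hence after integration, we deduce $\langle\tilde D\varphi_n,\varphi_n\rangle\to 0$ and $\langle(\tilde L+\min\sigma(\tilde D))\varphi_n,\varphi_n\rangle\to 0$ (using $\min\sigma(\tilde D)=0$, which is forced: the first convergence plus non-negativity of $\tilde D$ says $0\in\sigma(\tilde D)$). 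But then $\int_M \ell(x)\,|\varphi_n|^2\,\dvol\to 0$ while $\int_M|\varphi_n|^2=1$; since $\ell>0$ a.e., the mass of $|\varphi_n|^2$ must escape to the measure-zero set $M\setminus A$ in a suitable sense — and this is where one needs the extra input that $\tilde D\varphi_n\to 0$, which via elliptic regularity (Gårding on charts) prevents $|\varphi_n|^2$ from concentrating, giving the contradiction. The hard part, in either route, is exactly this last point: turning the a.e.\ positivity of $\ell$ together with the decay of $\langle\tilde D\varphi_n,\varphi_n\rangle$ into genuine uniform control, i.e.\ showing that an $L^2$-normalized sequence with $\tilde D$-energy tending to zero cannot have its mass concentrate on a closed null set — this is where ellipticity of $D$ (not just its non-negativity) is indispensable and where the finiteness of the cover of the compact base $M$ does the heavy lifting.
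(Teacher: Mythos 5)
You have correctly identified both the structure of the argument (split $\tilde P = \tilde D + \tilde L$, use $\tilde D\ge 0$ and the pointwise positivity of $\tilde L+\min\sigma(\tilde D)$ on a full-measure set, and observe that the naive sum only gives $\ge 0$) and, more importantly, where the genuine difficulty lies: one must show that a low-$\tilde D$-energy, $L^2$-normalized state cannot have its mass concentrate on the lift of a set of arbitrarily small measure in $M$. But you then defer exactly that step — ``this is where ellipticity of $D$ \ldots does the heavy lifting'' — rather than proving it, so there is a real gap.

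The paper closes this gap with a quantitative localization lemma (Proposition~\ref{Normcontrol}, proved via Lemma~\ref{Mtool}) which is not a generic Gårding-on-charts statement and uses the Galois covering structure in an essential way. Concretely: fix $\lambda_0>0$ and consider $s\in\im(E(\lambda_0))$ where $E(\cdot)$ is the spectral resolution of $\tilde D$. Then $\|\tilde D^\ell s\|\le \lambda_0^\ell\|s\|$ for all $\ell$. One covers $M$ by finitely many pairs $B_i\Subset A_i$ with $A_i$ a fundamental domain, splits $\Gamma$ into ``good'' translates $\frak S(s)$ (where the local $\tilde D^\ell$-energy is comparable to the local $L^2$-mass) and ``bad'' ones, disposes of the bad ones by a pigeonhole/Chebyshev estimate, and on the good ones applies the $\Gamma$-invariant Sobolev embedding $W^{2\ell d,2}\hookrightarrow L^\infty$ (with $2\ell d>\dim M$) on a single fundamental domain to bound $\|s\|_{L^\infty(\gamma B_i)}$ by $\|s\|_{L^2(\gamma A_i)}$ with a constant independent of $\gamma$. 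The outcome is: there exists $\varepsilon_0(\lambda_0)>0$ such that any open $U\subset M$ with $\vol_g(U)<\varepsilon_0(\lambda_0)$ satisfies $\int_{\tilde U}|s|^2\le\int_{\tilde M\setminus\tilde U}|s|^2$ for all $s\in\im(E(\lambda_0))$. Taking $\lambda_0=1$ and an open $U\supset M\setminus A$ of volume $<\varepsilon_0(1)$, one gets a uniform $C>0$ with $\tilde L+\min\sigma(\tilde D)\ge C$ on the compact set $M\setminus U$, and the chain of inequalities in the paper's proof gives $\langle\tilde P s,s\rangle\ge\tfrac{C}{2}\|s\|^2$ on $\im(E(1))$; the extension to all of $\mathcal D(\tilde P)$ uses the spectral gap $\tilde D\ge 1$ on $\im(E(1))^\perp$.

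Two further remarks on the details of your sketch. In route 2, your claim that $\min\sigma(\tilde D)=0$ ``is forced'' is incorrect and the reasoning is circular: what the Weyl-sequence argument actually gives is $\langle\tilde D\varphi_n,\varphi_n\rangle\to\min\sigma(\tilde D)$ (the spectral measure of $\varphi_n$ concentrates at the bottom of $\sigma(\tilde D)$), not $\to 0$; the argument goes through with this corrected statement but you should not assume $\min\sigma(\tilde D)=0$. In route 1, the partition-of-unity commutator scheme does not obviously close: the cross terms between $\tilde D$ and the cutoffs are of order $d-1$ and cannot be absorbed merely by ``a second application of non-negativity of $\tilde D$'', and a ``local spectral estimate'' on a single chart cannot see the global constraint $s\in\im(E(\lambda_0))$ which is exactly what makes the mass bound true. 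So route 1, as written, would fail; route 2 is the right idea but needs Proposition~\ref{Normcontrol} (or an equivalent) to be turned into a proof.
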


In order to prove this theorem we need some preliminary results. 

\begin{prop}
\label{Normcontrol}
Let $(M,g)$, $(E,\rho)$ and $D\colon C^{\infty}(M,E)\rightarrow C^{\infty}(M,E)$ be as above. Let  
$$
\tilde{D}\colon L^2(\tilde{M},\tilde{E},\tilde{g},\tilde{\rho})\rightarrow L^2(\tilde{M},\tilde{E},\tilde{g},\tilde{\rho})
$$ 
be the unique $L^2$-closed extension of $\tilde{D}\colon C_c^{\infty}(\tilde{M},\tilde{E})\rightarrow C^{\infty}_c(\tilde{M},\tilde{E})$  and let $\{E(\lambda)\}_{\lambda}$ be its the spectral resolution. Fix $\lambda_0>0$. 

Then there exists $\varepsilon_0(\lambda_0)>0$ such that for any  open subset  $U$ of $M$ with 
$$
\vol_g(U)<\varepsilon_0(\lambda_0)
$$
we have:
$$
\int_{\tilde{U}}|s|^2_{\tilde{\rho}}\dvol_{\tilde{g}}\leq \int_{{\tilde{M}\setminus \tilde{U}}}|s|^2_{\tilde{\rho}}\dvol_{\tilde{g}},
$$
for all $s\in \im(E(\lambda_0))$ and where $\tilde{U}$ is the preimage of $U$ through $\pi:\tilde{M}\rightarrow M$.
\end{prop}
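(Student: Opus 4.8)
The plan is to exploit the fact that on $\tilde M$ the space $\im(E(\lambda_0))$ consists of $L^2$-sections with a uniform bound relating their local $L^2$-mass to their global $L^2$-mass, via a heat-kernel (or resolvent) estimate that descends from the compactness of $M$. More precisely, first I would observe that $\tilde D$ being essentially self-adjoint and of order $d$, the spectral projector $E(\lambda_0)$ can be written as a smoothing operator: for any $s\in\im(E(\lambda_0))$ one has $s = \phi(\tilde D)s$ for a suitable Schwartz (or compactly supported smooth) function $\phi$ with $\phi\equiv 1$ on $[0,\lambda_0]$. The operator $\phi(\tilde D)$ has a smooth Schwartz kernel $K_{\phi}(x,y)$ on $\tilde M\times\tilde M$ which is $\Gamma$-invariant; by the cocompactness of the $\Gamma$-action and standard off-diagonal decay/local elliptic estimates, $K_{\phi}$ satisfies a uniform bound $\sup_{x}\int_{\tilde M}|K_{\phi}(x,y)|\,\dvol_{\tilde g}(y)\le C(\lambda_0)$ and similarly with the roles of $x,y$ swapped. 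Equivalently, $\phi(\tilde D)$ is bounded as an operator $L^1\to L^\infty$ and $L^2\to L^\infty$ with norms controlled only by $\lambda_0$ and the geometry of the compact quotient $M$.

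Next, using $s=\phi(\tilde D)s$ I would estimate the pointwise sup-norm: $\sup_{\tilde M}|s|_{\tilde\rho}\le C(\lambda_0)\,\|s\|_{L^2(\tilde M)}$. Then, for an open set $U\subset M$ with $\tilde U=\pi^{-1}(U)$,
\[
\int_{\tilde U}|s|_{\tilde\rho}^2\,\dvol_{\tilde g}\le \Big(\sup_{\tilde M}|s|_{\tilde\rho}^2\Big)\cdot\vol_{\tilde g}(\tilde U).
\]
Here is the point where I must be slightly careful: $\vol_{\tilde g}(\tilde U)$ is typically infinite when $\Gamma$ is infinite, so this crude bound is useless as written. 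Instead I would use the $\Gamma$-invariance of $|s|_{\tilde\rho}^2$ is \emph{not} available (since $s$ need not be $\Gamma$-invariant), so the correct route is to bound the local mass directly by the global one via the kernel: for $x\in\tilde U$,
\[
|s(x)|_{\tilde\rho}\le \int_{\tilde M}|K_\phi(x,y)|\,|s(y)|_{\tilde\rho}\,\dvol_{\tilde g}(y),
\]
and then integrate $|s|^2$ over $\tilde U$. Writing $\tilde U$ as the $\Gamma$-orbit of $U\cap(\text{fundamental domain})$ pieces, and splitting the $y$-integral according to distance from $\tilde U$, the off-diagonal decay of $K_\phi$ together with $\vol_g(U)$ small forces $\int_{\tilde U}|s|_{\tilde\rho}^2$ to be as small a fraction of $\|s\|_{L^2(\tilde M)}^2$ as we like — in particular $\le\tfrac12\|s\|^2_{L^2(\tilde M)}=\tfrac12\big(\int_{\tilde U}+\int_{\tilde M\setminus\tilde U}\big)$, which rearranges to the claimed inequality once $\vol_g(U)<\varepsilon_0(\lambda_0)$.

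The main obstacle is making the last step quantitatively rigorous: one needs that $\int_{\tilde U}|s|_{\tilde\rho}^2\,\dvol_{\tilde g}\le\delta(\vol_g(U))\,\|s\|_{L^2(\tilde M)}^2$ with $\delta\to 0$ as $\vol_g(U)\to0$, \emph{uniformly over all $s\in\im(E(\lambda_0))$ and over the shape, not just the measure, of $U$}. The clean way to package this is to note that the multiplication operator $\mathbf 1_{\tilde U}\colon L^2(\tilde M,\tilde E)\to L^2(\tilde M,\tilde E)$ composed with $\phi(\tilde D)$ is, by the kernel estimate above, Hilbert–Schmidt-type/bounded with operator norm $\le C(\lambda_0)\,\vol_g(U)^{1/2}$ — this uses only that $\tilde U$ meets each $\Gamma$-translate of a fundamental domain in a set of measure $\le\vol_g(U)$, together with the uniform $L^1$-kernel bound. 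Hence $\|\mathbf 1_{\tilde U}s\|_{L^2}=\|\mathbf 1_{\tilde U}\phi(\tilde D)s\|_{L^2}\le C(\lambda_0)\vol_g(U)^{1/2}\|s\|_{L^2}$, and choosing $\varepsilon_0(\lambda_0)$ so that $C(\lambda_0)^2\varepsilon_0(\lambda_0)\le\tfrac12$ yields $\int_{\tilde U}|s|^2\le\tfrac12\int_{\tilde M}|s|^2$, i.e.\ the stated inequality. The remaining routine verifications — the existence and uniform bounds on $K_\phi$, its $\Gamma$-equivariance, and the reduction of the operator-norm bound to the uniform $L^1$-kernel estimate via Schur's test — I would carry out using only completeness of $(\tilde M,\tilde g)$, essential self-adjointness of $\tilde D$, and cocompactness of $\Gamma$.
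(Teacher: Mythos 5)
Your overall strategy — write $s=\phi(\tilde D)s$ with a smoothing function $\phi\equiv 1$ on $[0,\lambda_0]$ and then estimate the operator norm of $\mathbf 1_{\tilde U}\phi(\tilde D)$ — is plausible, but the central quantitative claim, namely
$$
\|\mathbf 1_{\tilde U}\,\phi(\tilde D)\|_{L^2\to L^2}\le C(\lambda_0)\,\vol_g(U)^{1/2},
$$
is \emph{not} a consequence of the uniform $L^1$-kernel bound together with $\vol_{\tilde g}(\tilde U\cap\gamma A)\le\vol_g(U)$, as you claim. The Schur test applied to the kernel $\mathbf 1_{\tilde U}(x)K_\phi(x,y)$ gives one row/column sum of size $O(1)$ from the $L^1$ bound; for the other you must show $\sup_y\int_{\tilde U}|K_\phi(x,y)|\,\dvol_{\tilde g}(x)\le C\,\vol_g(U)$, and this needs a \emph{pointwise} bound on $K_\phi$ together with off-diagonal decay that is summable over all $\Gamma$-translates of a fundamental domain. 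A uniform $L^1$-kernel bound alone is compatible with $\int_{\tilde U}|K_\phi(\cdot,y)|$ being of order one however small $\vol_g(U)$ is (concentrate the mass of the kernel near $y$ and let $U$ contain a tiny neighborhood of $\pi(y)$). Worse, since $\pi_1(M)$ must have exponential growth here (see Proposition~\ref{prop:amenable}), the $\Gamma$-sum that you would have to control grows exponentially, so polynomial off-diagonal decay of $K_\phi$ — which is what one can easily extract for a general order-$d$ formally self-adjoint elliptic $D$ — does not suffice; you would need Gaussian or compactly supported propagation. That is available for Laplace- or Dirac-type $D$ via finite propagation speed, but the proposition is stated for an \emph{arbitrary} order-$d$ elliptic operator, where I do not see how to obtain such decay. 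So either you must restrict the class of $D$, or you must supply a genuinely nontrivial kernel estimate that you currently wave away as ``routine.''

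By contrast, the paper's proof of Proposition~\ref{Normcontrol} (through Lemma~\ref{Mtool}) avoids global kernel estimates entirely: fix a finite family of fundamental domains $A_i$ and relatively compact $B_i\subset A_i$ covering $M$; use the spectral bound $\|\tilde D^\ell s\|\le\lambda_0^\ell\|s\|$ with $2\ell d>\dim M$; split $\Gamma$ into a ``good'' set $\mathfrak S(s)$ on which a local Morrey/Sobolev elliptic estimate on the compact $A_i$ gives a uniform pointwise bound $|\gamma(s)|^2\lesssim\|\gamma(s)\|^2_{L^2(A_i)}$, and a ``bad'' set whose total contribution is already $\le\tfrac1{4N}\|s\|^2$ by Chebyshev-type counting. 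The smallness of $\vol_g(U)$ then only enters through the local sup-norm estimate on the good translates. This is strictly more elementary (only classical Sobolev embedding on a fixed compact domain), requires no off-diagonal decay of any operator kernel, and works for arbitrary order $d$. You should either adopt that local-elliptic/good-bad decomposition, or else make the kernel-decay hypotheses in your route explicit and prove them, being careful about the exponential volume growth of $\tilde M$.
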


The above proposition will follow easily by the next 

\begin{lem}
\label{Mtool}
Let $A_1,\dots,A_N$ and $B_1,\dots,B_N$ be  finite sequences of open subset of $\tilde{M}$ such that
\begin{enumerate}
\item $A_i$ is a fundamental domain of $\pi\colon \tilde{M}\rightarrow M$ for each $i=1,\dots,N$;
\item $B_i$ is a relatively compact open subset with smooth boundary of $\tilde{M}$ such that $\overline{B_i}\subset A_i$, for each $i=1,\dots,N$;
\item $\{\pi(B_i)\}_{1,\dots,N}$ is an open cover of $M$.
\end{enumerate}
Given $\lambda_0 > 0$, there exists $\varepsilon_0(\lambda_0)>0$ such that for any open subset $U$ of  $M$ satisfying  
$$
\vol_g(U)<\varepsilon_0(\lambda_0),
$$
the following inequality holds
$$
\int_{\pi_1(M).B_i\cap \tilde{U}}|s|_{\tilde{\rho}}^2\dvol_{\tilde{g}}\leq \frac{1}{2N}\int_{\tilde{M}}|s|^2_{\tilde{\rho}}\dvol_{\tilde{g}},
$$ 
for any $s\in \im(E(\lambda_0))$ and $i=1,\dots,N$. 
\end{lem}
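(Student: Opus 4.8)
The plan is to reduce the inequality to a uniform interior elliptic estimate for $D$, applied on the single fixed fundamental domain $A_i$, combined with the fact that the $L^2$-masses carried by the $\Gamma$-translates of $A_i$ add up exactly to the total $L^2$-mass on $\tilde{M}$. First I would set up local data: for each $i$ fix an open set $W_i$ with $\overline{B_i}\subset W_i$ and $\overline{W_i}$ compact and contained in $A_i$ (possible since $B_i$ is relatively compact in the open set $A_i$). As $A_i$ is a fundamental domain, $\pi|_{A_i}\colon A_i\to\pi(A_i)$ is an isometric diffeomorphism onto an open subset of $M$ carrying $(\tilde{E},\tilde{\rho})|_{A_i}$ and $\tilde{D}|_{A_i}$ to $(E,\rho)|_{\pi(A_i)}$ and $D|_{\pi(A_i)}$; in particular $\overline{\pi(B_i)}$ is compact and contained in the open set $\pi(W_i)\subset M$. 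Iterating the interior elliptic estimate for the elliptic operator $D$ on $M$ and using the Sobolev embedding into continuous sections once $k_i$ is large enough, there are an integer $k_i$ and a constant $C_i>0$ with
$$
\sup_{\overline{\pi(B_i)}}|u|_\rho^2\ \le\ C_i\Big(\|u\|^2_{L^2(\pi(W_i))}+\sum_{j=1}^{k_i}\|D^j u\|^2_{L^2(\pi(W_i))}\Big)
$$
for every $u\in C^\infty(\pi(W_i),E)$.

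Next comes the spectral input: since $s=E(\lambda_0)s$ (and $\tilde{D}\ge 0$, the situation of Theorem \ref{pspec}), the spectral theorem shows $s$ is smooth, lies in $\mathrm{Dom}(\tilde{D}^j)$ for all $j$, and $\|\tilde{D}^j s\|_{L^2(\tilde{M})}\le\lambda_0^j\|s\|_{L^2(\tilde{M})}$. Then I reorganize the integral. Since $\overline{B_i}\subset A_i$, the sets $\gamma(B_i)$, $\gamma\in\Gamma$, are pairwise disjoint with union $\pi_1(M)\cdot B_i=\pi^{-1}(\pi(B_i))$, and since $\tilde{U}=\pi^{-1}(U)$ is $\Gamma$-invariant we get $\pi^{-1}(\pi(B_i))\cap\tilde{U}=\bigsqcup_{\gamma\in\Gamma}\gamma(B_i\cap\tilde{U})$, hence
$$
\int_{\pi^{-1}(\pi(B_i))\cap\tilde{U}}|s|^2_{\tilde{\rho}}\,\dvol_{\tilde{g}}=\sum_{\gamma\in\Gamma}\int_{B_i\cap\tilde{U}}|s_\gamma|^2_{\tilde{\rho}}\,\dvol_{\tilde{g}},
$$
where $s_\gamma:=\gamma^{-1}(s)$, which via $\pi|_{A_i}$ becomes a smooth section $u_\gamma\in C^\infty(\pi(W_i),E)$. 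As the deck transformations act by isometries and commute with $\tilde{D}$, one has $\|D^j u_\gamma\|^2_{L^2(\pi(W_i))}\le\|\tilde{D}^j s\|^2_{L^2(\gamma(A_i))}$, and summing over the tiling $\tilde{M}=\bigsqcup_\gamma\gamma(A_i)$ (up to null sets) gives $\sum_{\gamma}\|\tilde{D}^j s\|^2_{L^2(\gamma(A_i))}=\|\tilde{D}^j s\|^2_{L^2(\tilde{M})}\le\lambda_0^{2j}\|s\|^2_{L^2(\tilde{M})}$. Finally, $\pi|_{B_i}$ is an injective local isometry with $\pi(B_i\cap\tilde{U})\subset U$, so $\vol_{\tilde{g}}(B_i\cap\tilde{U})\le\vol_g(U)$.

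Putting these together, bound $|s_\gamma|^2$ on $B_i\cap\tilde{U}$ by $\sup_{\overline{\pi(B_i)}}|u_\gamma|^2$, apply the elliptic estimate to each $u_\gamma$, and sum:
$$
\int_{\pi^{-1}(\pi(B_i))\cap\tilde{U}}|s|^2_{\tilde{\rho}}\,\dvol_{\tilde{g}}\ \le\ \vol_g(U)\sum_{\gamma\in\Gamma}\sup_{\overline{\pi(B_i)}}|u_\gamma|^2\ \le\ \vol_g(U)\,C_i\Big(1+\sum_{j=1}^{k_i}\lambda_0^{2j}\Big)\|s\|^2_{L^2(\tilde{M})}.
$$
It then suffices to take $\varepsilon_0(\lambda_0):=\big(2N\max_{1\le i\le N}C_i\,(1+\sum_{j=1}^{k_i}\lambda_0^{2j})\big)^{-1}$, which depends only on $\lambda_0$, $N$ and the fixed geometric data.

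The only genuinely delicate step is the reorganization. A naive estimate bounding $|s|$ on $\pi^{-1}(\pi(B_i))$ by the global $L^2$-norm of $s$ times $\vol_{\tilde{g}}(\tilde{U})$ is hopeless, since $\vol_{\tilde{g}}(\tilde{U})=+\infty$ as soon as $\Gamma$ is infinite; one must fold the integral back onto the single fundamental domain $A_i$ so that $\vol_g(U)$ is paid only once, the infinitely many translates entering solely through the convergent series $\sum_\gamma\|\tilde{D}^j s\|^2_{L^2(\gamma(A_i))}=\|\tilde{D}^j s\|^2_{L^2(\tilde{M})}$. The uniformity of the elliptic estimate across translates is then automatic, because every $\gamma(A_i)$ is isometric to the fixed region $\pi(A_i)\subset M$.
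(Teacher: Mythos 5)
Your proof is correct, and it follows the same underlying strategy as the paper — interior elliptic estimate on a fixed fundamental-domain region plus Sobolev embedding to obtain a pointwise bound, factoring out $\vol_g(U)$ once, and folding the $\Gamma$-translates back so that the $L^2$-norms telescope to the global norm. The one genuine difference is organizational: the paper first splits $\Gamma$ into a ``good'' set $\mathfrak{S}(s)$ (where $\|\tilde{D}^{\ell}s\|^2_{L^2(\gamma A)}\leq 4N\lambda_0^{2\ell}\|s\|^2_{L^2(\gamma A)}$) and its complement, handles the complement directly from the spectral bound $\|\tilde{D}^{\ell}s\|^2\leq\lambda_0^{2\ell}\|s\|^2$, and applies the elliptic estimate only on the good set where the curvature-free constant can be absorbed per translate, aiming for $\tfrac{1}{4N}$ from each half. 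You instead apply the elliptic estimate to \emph{every} translate $u_\gamma$, then observe that $\sum_\gamma\|u_\gamma\|^2_{L^2(\pi(W_i))}\leq\|s\|^2_{L^2(\tilde M)}$ and $\sum_\gamma\|D^ju_\gamma\|^2_{L^2(\pi(W_i))}\leq\|\tilde D^js\|^2_{L^2(\tilde M)}\leq\lambda_0^{2j}\|s\|^2$, so the sum converges for free and the good/bad dichotomy is never needed. This is a cleaner account of the same mechanism: it buys a shorter argument and a more transparent constant $\varepsilon_0(\lambda_0)$, at no extra cost. (The only hypothesis you should make explicit, parallel to the paper's choice of $\ell$ with $2\ell d>m$, is that $k_i$ is taken large enough that $H^{k_i d}$ embeds into $C^0$ on a neighborhood of $\overline{\pi(B_i)}$; your phrase ``once $k_i$ is large enough'' covers this.)
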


\begin{proof}
The proof of this lemma is similar to that of  \cite[Lemma 3.4]{BDET}. However since there are some minor variations we reproduce it for the benefit of the reader. 

Throughout the proof let us fix arbitrarily a pair $B_i\subset A_i$ that for simplicity we will denote with $B\subset A$. Let $s\in \im(E(\lambda_0))$ be arbitrarily fixed. Since $s\in \im(E(\lambda_0))$ we have that $s\in \mathcal{D}(\tilde{D}^n)$ for each positive integer $n$ and 
$$
\int_{\tilde{M}}|\tilde{D}^ns|^2_{\tilde{\rho}}\dvol_{\tilde{\rho}}\leq \lambda_0^{2n}\int_{\tilde{M}}|s|^2_{\tilde{\rho}}\dvol_{\tilde{g}}.
$$
Therefore, 
$$
\sum_{\gamma\in \pi_1(M)}\int_{\gamma A}(|\tilde{D}^ns|^2_{\tilde{\rho}}-\lambda_0^{2n}|s|^2_{\tilde{\rho}})\dvol_{\tilde{g}}\leq 0
$$ 
and thus there exists at least an element $\overline{\gamma}\in \pi_1(M)$ such that 
$$
\int_{\overline{\gamma} A}(|\tilde{D}^ns|^2_{\tilde{\rho}}-\lambda_0^{2n}|s|^2_{\tilde{\rho}})\dvol_{\tilde{g}}\leq 0.
$$ 
Let us fix a positive integer $\ell$ such that $2\ell d>m$. Next, define $\frak{S}(s)\subset \pi_1(M)$ as 
$$
\frak{S}(s)\colon =\left\{\gamma \in \pi_1(M)\colon  \int_{\gamma A}|\tilde{D}^{\ell}s|^2_{\tilde{\rho}}\dvol_{\tilde{g}}\leq 4N\lambda_0^{2\ell}\int_{\gamma A}|s|_{\tilde{\rho}}^2\dvol_{\tilde{g}}\right\}.
$$ 
We have 

\begin{multline*}
\int_{\tilde{M}}|s|^2_{\tilde{\rho}}\dvol_{\tilde{g}}
\geq \lambda_0^{-2\ell}\int_{\tilde{M}}|\tilde{D}^{\ell}s|^2_{\tilde{\rho}}\dvol_{\tilde{g}} \\
\geq \sum_{\gamma \notin \frak{S}(s)} \lambda_0^{-2\ell}\int_{\gamma A}|\tilde{D}^{\ell}s|^2_{\tilde{\rho}}\dvol_{\tilde{g}}\geq 
 \sum_{\gamma \notin \frak{S}(s)} 4N\int_{\gamma A}|s|^2_{\tilde{\rho}}\dvol_{\tilde{g}}.
\end{multline*}
Thus, we can deduce that for any   $U$ and $ \tilde{U}$ as above,

\begin{equation}
\label{notinS}
\sum_{\gamma \notin \frak{S}(s)} \int_{\gamma A\cap \tilde{U}}|s|^2_{\tilde{\rho}}\dvol_{\tilde{g}}\leq \frac{1}{4N}\int_{\tilde{M}}|s|^2_{\tilde{g}}\dvol_{\tilde{g}}.
\end{equation}
Let us now consider any $\gamma\in \frak{S}(s)$.
Since $\gamma^{-1}\colon (\tilde{M},\tilde{g})\rightarrow (\tilde{M},\tilde{g})$ is an isometry and $\gamma_E\colon \tilde{E}\rightarrow \tilde{E}$ is a fiberwise isometry we have

\begin{align}
\label{contro}
\int_A|\tilde{D}^{\ell}\gamma(s)|_{\tilde{\rho}}^2\dvol_{\tilde{g}}
&=\int_{ A}|\gamma(\tilde{D}^{\ell}s)|_{\tilde{\rho}}^2\dvol_{\tilde{g}}=\int_{ A}|\tilde{D}^{\ell}s|_{\tilde{\rho}}^2\circ \gamma^{-1}\dvol_{\tilde{g}}\\
\nonumber &=\int_{ \gamma A}|\tilde{D}^{\ell}s|_{\tilde{\rho}}^2\dvol_{\tilde{g}}\leq 4N\lambda_0^{2\ell}\int_{\gamma A}|s|_{\tilde{\rho}}^2\dvol_{\tilde{g}}\\
\nonumber &=4N\lambda_0^{2\ell}\int_A|\gamma(s)|_{\tilde{\rho}}^2\dvol_{\tilde{g}}.
\end{align}

Thanks to the elliptic estimates and the fact that $2d\ell>m$, see \textsl{e.g.} \cite[Lemma 1.1.17]{Les97}, we know that there exists a positive constant $C$ such that for any $x\in B$ and $\psi\in \mathcal{D}(\tilde{D}^{\ell})$ we have 
$$
|\psi(x)|_{\tilde{\rho}}^2\leq C\left(\|\psi\|^2_{L^2(A,\tilde{E}|_A,\tilde{g}|_A,\tilde{\rho}|_A)}+\|\tilde{D}^{\ell}\psi\|^2_{L^2(A,\tilde{E}|_A,\tilde{g}|_A,\tilde{\rho}|_A)}\right).
$$
Thus, for $\gamma\in \frak{S}(s)$, we get thanks to \eqref{contro} 
\begin{multline*}
|(\gamma(s))|_B|^2_{\tilde{\rho}}
\leq C\left(\|\gamma(s)\|^2_{L^2(A,\tilde{E}|_A,\tilde{g}|_A,\tilde{\rho}|_A)}+\|\tilde{D}^{\ell}(\gamma(s))\|^2_{L^2(A,\tilde{E}|_A,\tilde{g}|_A,\tilde{\rho}|_A)}\right) \\
\leq  C(1+4N\lambda_0^{2\ell})\|\gamma(s)\|^2_{L^2(A,\tilde{E}|_A,\tilde{g}|_A,\tilde{\rho}|_A)}
\end{multline*}
and therefore 
\begin{multline}
\label{Sobolevxz}
\int_{B\cap \tilde{U}}|\gamma(s)|^2_{\tilde{\rho}}\dvol_{\tilde{g}}
\leq \vol_{\tilde{g}}(B\cap \tilde{U})C(1+4N\lambda_0^{2\ell})\|\gamma(s)\|^2_{L^2(A,\tilde{E}|_A,\tilde{g}|_A,\tilde{\rho}|_A)} \\
=\vol_{\tilde{g}}(B\cap \tilde{U})C(1+4N\lambda_0^{2\ell})\int_A|\gamma(s)|^2_{\tilde{\rho}}\dvol_{\tilde{g}}.
\end{multline}
 Finally if we choose $\varepsilon_0(\lambda_0) <\frac{1}{4NC(1+4N\lambda_0^{2\ell})}$, we find 
 $$
 \vol_{\tilde{g}}(B\cap \tilde{U})C(1+4N\lambda_0^{2\ell})<\frac{1}{4N}.
 $$ 
 Then we have:
$$
\begin{aligned}
\int_{\pi_1(M).B\cap \tilde{U}}|s|^2_{\tilde{\rho}}\dvol_{\tilde{g}} 
& =\sum_{\gamma \in \frak{S}(s)}\int_{\gamma B\cap \tilde{U}}|s|^2_{\tilde{\rho}}\dvol_{\tilde{g}}+\sum_{\gamma \notin \frak{S}(s)}\int_{\gamma B\cap \tilde{U}}|s|^2_{\tilde{\rho}}\dvol_{\tilde{g}} \\
\textrm{(by \eqref{notinS})} & \leq  \sum_{\gamma \in \frak{S}(s)}\int_{\gamma B\cap \tilde{U}}|s|^2_{\tilde{\rho}}\dvol_{\tilde{g}}+\frac{1}{4N}\int_{\tilde{M}}|s|^2_{\tilde{\rho}}\dvol_{\tilde{g}} \\
& =\sum_{\gamma \in \frak{S}(s)}\int_{\gamma (B\cap \tilde{U})}|s|^2_{\tilde{\rho}}\dvol_{\tilde{g}}+\frac{1}{4N}\int_{\tilde{M}}|s|^2_{\tilde{\rho}}\dvol_{\tilde{g}} \\
& = \sum_{\gamma \in \frak{S}(s)}\int_{B\cap \tilde{U}}|\gamma(s)|^2_{\tilde{\rho}}\dvol_{\tilde{g}}+\frac{1}{4N}\int_{\tilde{M}}|s|^2_{\tilde{\rho}}\dvol_{\tilde{g}} \\
\textrm{(by \eqref{Sobolevxz})} & \leq \sum_{\gamma \in \frak{S}(s)}\frac{1}{4N}\int_{A}|\gamma(s)|^2_{\tilde{\rho}}\dvol_{\tilde{g}}+\frac{1}{4N}\int_{\tilde{M}}|s|^2_{\tilde{\rho}}\dvol_{\tilde{g}} \\
& =\sum_{\gamma \in \frak{S}(s)}\frac{1}{4N}\int_{\gamma A}|s|^2_{\tilde{\rho}}\dvol_{\tilde{g}}+\frac{1}{4N}\int_{\tilde{M}}|s|^2_{\tilde{\rho}}\dvol_{\tilde{g}} \\
& \leq \frac{1}{4N}\int_{\tilde{M}}|s|^2_{\tilde{\rho}}\dvol_{\tilde{g}}+\frac{1}{4N}\int_{\tilde{M}}|s|^2_{\tilde{\rho}}\dvol_{\tilde{g}} \\
& =\frac{1}{2N}\int_{\tilde{M}}|s|^2_{\tilde{\rho}}\dvol_{\tilde{g}}.
\end{aligned}
$$
If we repeat the above procedure for each pair $B_i\subset A_i$ and we choose $\varepsilon_0(\lambda_0)>0$ in such a way that 
$$
\vol_{\tilde{g}}(B_i\cap \tilde{U})C(1+4N\lambda_0^{2\ell})<\frac{1}{4N},\quad \mathrm{for}\ i=1,\dots,N,
$$ 
we can conclude that for any arbitrarily fixed $\lambda_0>0$ there exists $\varepsilon_0(\lambda_0)>0$ such that if $U$ is an open set in $M$ satisfying $\vol_g(U)<\varepsilon_{0}(\lambda_0)$ we then have
$$\int_{\pi_1(M).B_i\cap \tilde{U}}|s|_{\tilde{\rho}}^2\dvol_{\tilde{g}}\leq \frac{1}{2N}\int_{\tilde{M}}|s|^2_{\tilde{\rho}}\dvol_{\tilde{g}}$$
for any $s\in \im(E(\lambda_0))$ and $i=1,\dots,N$,  as desired.
\end{proof}

Endowed with Lemma \ref{Mtool} we can now prove Proposition \ref{Normcontrol}.

\begin{proof}[Proof of Proposition \ref{Normcontrol}]
We have
\begin{multline*}
\int_{\tilde{U}}|s|^2_{\tilde{\rho}}\dvol_{\tilde{g}}\leq \sum_{i=1}^N\int_{\pi_1(M).B_i\cap \tilde{U}}|s|^2_{\tilde{\rho}}\dvol_{\tilde{g}} \\ 
\textrm{(by Lemma \eqref{Mtool})}\ \leq  \sum_{i=1}^N\frac{1}{2N}\int_{\tilde{M}}|s|^2_{\tilde{\rho}}\dvol_{\tilde{g}}= \frac{1}{2}\int_{\tilde{M}}|s|^2_{\tilde{\rho}}\dvol_{\tilde{g}}.
\end{multline*}
Thus, 
$$
\int_{\tilde{U}}|s|^2_{\tilde{\rho}}\dvol_{\tilde{g}}\leq \frac{1}{2}\int_{\tilde{M}\setminus \tilde{U}}|s|^2_{\tilde{\rho}}\dvol_{\tilde{g}}+\frac{1}{2}\int_{\tilde{U}}|s|^2_{\tilde{\rho}}\dvol_{\tilde{g}}
$$ 
and so we reach the desired conclusion 
$$
\int_{\tilde{U}}|s|^2_{\tilde{\rho}}\dvol_{\tilde{g}}\leq \int_{\tilde{M}\setminus \tilde{U}}|s|^2_{\tilde{\rho}}\dvol_{\tilde{g}},
$$
and the proposition follows.
\end{proof}

We are finally in a good position to prove Theorem \ref{pspec}.

\begin{proof}[Proof of Theorem \ref{pspec}] 
Let $Z:=M\setminus A$. Let $\lambda_0 = 1,$ $\varepsilon_0(1)>0$ and $U\supset Z$  be as in Prop. \ref{Normcontrol}. By the fact that $L+\min(\sigma(\tilde{D}))>0$ over $A$ and $M\setminus U\subset A$, we know that there exists a constant $C>0$ such that  $L+\min(\sigma(\tilde{D}))\geq C$ over
$M\setminus U$. Thus, thanks to Prop. \ref{Normcontrol}, we get
$$
\begin{aligned}
\frac{C}{2}\int_{\tilde{M}}|s|^2_{\tilde{\rho}}\dvol_{\tilde{g}} & \leq C\int_{{\tilde{M}\setminus \tilde{U}}}|s|^2_{\tilde{\rho}}\dvol_{\tilde{g}} \\
& \leq \int_{{\tilde{M}\setminus \tilde{U}}}\left(\tilde{\rho}(Ls,s)+\min(\sigma(\tilde{D}))|s|^2_{\tilde{\rho}}\right)\dvol_{\tilde{g}}\\
&\leq \int_{{\tilde{M}}}\left(\tilde{\rho}(Ls,s)+\min(\sigma(\tilde{D}))|s|^2_{\tilde{\rho}}\right)\dvol_{\tilde{g}}\\
&\leq \int_{{\tilde{M}}}\left(\tilde{\rho}(\tilde{D}s,s)+\tilde{\rho}(Ls,s)\right)\dvol_{\tilde{g}}\\
&=\int_{\tilde{M}} \tilde{\rho}(\tilde{P}s,s)\dvol_{\tilde{g}}
\end{aligned}
$$ 
for each $s\in \im(E(1))$. Put it differently, we have just proved that there exists a positive constant $C$ such that
$$
\langle \tilde{P}s,s\rangle_{L^2(\tilde{M},\tilde{E},\tilde{g},\tilde{\rho})}\geq\frac C2\langle s,s\rangle_{L^2(\tilde{M},\tilde{E},\tilde{g},\tilde{\rho})}
$$ for each $s\in \im(E(1))$. It is now easy to conclude, see \textsl{e.g.} \cite[p. 28]{BDET}, that given  any $w\in \mathcal{D}(\tilde{P})$ we have  

$$ 
\langle\tilde{P}w,w\rangle_{L^2(\tilde{M},\tilde{E},\tilde{g},\tilde{\rho}))} \geq K\|w\|^2_{L^2(\tilde{M},\tilde{E},\tilde{g},\tilde{\rho}))},
$$
with $K:=\min\{C/2,1\}$ and this clearly implies that $0\notin \sigma(\tilde{P})$, as required.
\end{proof}

\section{Homologically nonsingular hyperbolic cohomology classes and first spectral consequences}

Let us now consider the Laplace--Beltrami operator $\tilde{\Delta}\colon C^{\infty}_c(\tilde{N})\rightarrow C^{\infty}_c(\tilde{N})$ and let $$\tilde{\Delta}\colon L^2(\tilde{N},\tilde{h})\rightarrow L^2(\tilde{N},\tilde{h})$$ be its unique closed extension. 
Denote with $\tilde{\lambda}_{0,h}$ the bottom of the spectrum of $\tilde{\Delta}$, that is $$\tilde{\lambda}_{0,h}:=\min(\sigma(\tilde{\Delta})).$$ We have the following

\begin{prop}
\label{C-In}
Let $N$ be a compact  manifold of dimension $n$ such that  
$$
V^{n}_{\mathrm{hyp}}(N)=H^{n}(N,\mathbb R).
$$ 
Then, for any arbitrarily fixed Riemannian metric $h$, we have $$\tilde{\lambda}_{0,h}>0.$$
\end{prop}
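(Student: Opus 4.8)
The plan is to run Gromov's $\tilde d$-boundedness argument applied to the volume form. Since $N$ is compact and oriented, the Riemannian volume form $\dvol_h$ is a closed $n$-form representing a class in $H^n_{\mathrm{dR}}(N)\simeq H^n(N,\mathbb R)$ which is nonzero, because $\int_N\dvol_h=\vol_h(N)>0$. By hypothesis $H^n(N,\mathbb R)=V^n_{\mathrm{hyp}}(N)$, so $[\dvol_h]$ is hyperbolic: there is $\beta\in L^{\infty}\Omega^{n-1}(\tilde N,\tilde h)\cap\Omega^{n-1}(\tilde N)$ with $\tilde d\beta=\pi^*\dvol_h=\dvol_{\tilde h}$, where $\pi\colon\tilde N\to N$ is the universal cover with $\tilde h=\pi^*h$. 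Set $C:=\|\beta\|_{L^{\infty}(\tilde N,\tilde h)}$; since $\dvol_{\tilde h}\not\equiv 0$ one has $0<C<\infty$.

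Next I would integrate by parts on $\tilde N$. For any $f\in C^{\infty}_c(\tilde N)$ the form $f^2\beta$ is a compactly supported smooth $(n-1)$-form, so $\int_{\tilde N}\tilde d(f^2\beta)=0$ by Stokes, and since $\tilde d(f^2\beta)=\tilde d(f^2)\wedge\beta+f^2\,\tilde d\beta=2f\,df\wedge\beta+f^2\dvol_{\tilde h}$ we obtain
$$
\int_{\tilde N}f^2\,\dvol_{\tilde h}=-2\int_{\tilde N}f\,df\wedge\beta .
$$
Because $\star\colon\Omega^{n-1}\to\Omega^1$ is a fibrewise isometry, the pointwise bound $|df\wedge\beta|_{\tilde h}\le|df|_{\tilde h}\,|\beta|_{\tilde h}\le C\,|df|_{\tilde h}$ holds, so applying Cauchy--Schwarz gives
$$
\|f\|_{L^2(\tilde N)}^2\le 2C\int_{\tilde N}|f|\,|df|_{\tilde h}\,\dvol_{\tilde h}\le 2C\,\|f\|_{L^2(\tilde N)}\,\|df\|_{L^2(\tilde N)},
$$
hence $\|df\|_{L^2(\tilde N)}^2\ge (2C)^{-2}\|f\|_{L^2(\tilde N)}^2$ for every $f\in C^{\infty}_c(\tilde N)$.

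Finally I would invoke the variational characterization of the bottom of the spectrum: since $(\tilde N,\tilde h)$ is complete, the Laplace--Beltrami operator is essentially self-adjoint on $C^{\infty}_c(\tilde N)$, and
$$
\tilde\lambda_{0,h}=\min(\sigma(\tilde\Delta))=\inf_{\substack{f\in C^{\infty}_c(\tilde N)\\ f\ne 0}}\frac{\langle\tilde\Delta f,f\rangle_{L^2}}{\|f\|_{L^2}^2}=\inf_{\substack{f\in C^{\infty}_c(\tilde N)\\ f\ne 0}}\frac{\|df\|_{L^2}^2}{\|f\|_{L^2}^2},
$$
using $\tilde\Delta=d^*d$ on functions. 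Combining with the inequality above yields $\tilde\lambda_{0,h}\ge(2C)^{-2}>0$, as claimed. There is no serious obstacle here, as this is the classical Gromov--Brooks argument; the only points deserving a word of care are the identification of $\min(\sigma(\tilde\Delta))$ with the infimum of the Rayleigh quotient over $C^{\infty}_c(\tilde N)$ (which rests on completeness of $\tilde N$, already used in the paper for essential self-adjointness) and the elementary pointwise estimate for $df\wedge\beta$; note also that orientability of $N$ is what makes $\dvol_h$ a global $n$-form, so that $[\dvol_h]\in H^n(N,\mathbb R)$ is a legitimate nonzero hyperbolic class to feed into the argument.
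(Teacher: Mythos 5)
Your argument is correct and it arrives at the same quantitative bound $\tilde\lambda_{0,h}\geq(2\|\beta\|_{L^{\infty}})^{-2}$, but via a different route than the paper. The paper uses the $\tilde d$-bounded primitive $\beta$ of $\dvol_{\tilde h}$ to establish, by Stokes over relatively compact domains $\Omega\subset\tilde N$, a linear isoperimetric inequality $\vol_{\tilde h}(\Omega)\leq\|\beta\|_{L^{\infty}}\vol_{\tilde h}(\partial\Omega)$, and then invokes Cheeger's inequality (so $\tilde\lambda_{0,h}\geq \mathfrak{h}^2/4$ with $\mathfrak{h}\geq\|\beta\|_{L^\infty}^{-1}$) as a black box. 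You instead apply Stokes to the compactly supported $(n-1)$-form $f^2\beta$, which gives directly a global Poincar\'e inequality $\|f\|_{L^2}\leq 2\|\beta\|_{L^\infty}\|df\|_{L^2}$ for all $f\in C^\infty_c(\tilde N)$, and close with the Rayleigh quotient characterization of $\min(\sigma(\tilde\Delta))$. Your version is more self-contained, replacing the Cheeger inequality with an elementary integration by parts plus Cauchy--Schwarz; the paper's is shorter to write and emphasizes the isoperimetric picture. Both are standard renderings of Gromov's $\tilde d$-boundedness trick and yield the same explicit lower bound. The two small points you flag are indeed the ones that deserve a word: the pointwise estimate $|df\wedge\beta|_{\tilde h}\leq|df|_{\tilde h}|\beta|_{\tilde h}$ follows from writing $df\wedge\beta=\pm\langle df,\star\beta\rangle\,\dvol_{\tilde h}$ and using that $\star$ is a pointwise isometry, and the identification of $\min(\sigma(\tilde\Delta))$ with $\inf_{0\neq f\in C^\infty_c(\tilde N)}\|df\|_{L^2}^2/\|f\|_{L^2}^2$ holds by completeness of $\tilde N$ (essential self-adjointness of $\tilde\Delta$ on $C^\infty_c$); and you are right that the tacit orientability of $N$ (stated at the beginning of Section~2) is what makes $[\dvol_h]$ a nonzero class to feed into the hypothesis.
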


\begin{rem}
The hypothesis is fulfilled for instance whenever $\pi_1(N)$ is not amenable, cf. \cite{Sik01}. In particular the proposition holds true if $N$ is Kähler topologically hyperbolic by Proposition \ref{prop:amenable}.
\end{rem}

\begin{proof}
Since $V^{n}_{\mathrm{hyp}}(N)=H^{n}(N,\mathbb R)$, we know that there exists a smooth $(n-1)$-form, $\beta\in L^{\infty}\Omega^{n-1}(\tilde{N},\tilde{h})$, such that $d\beta=\dvol_{\tilde{h}}$, with $\tilde{h}=\pi^*h$. From the existence of such a form $\beta$ we obtain a linear isoperimetric inequality with isoperimetric constant given by $\|\beta\|_{L^{\infty}\Omega^1(\tilde{N},\tilde{h})}$. 

Indeed let $\Omega\subset \tilde{N}$ be a relatively compact open subset with $C^1$-boundary. We have 
$$
\begin{aligned}
\vol_{\tilde{h}}(\Omega)&= \int_{\Omega}\dvol_{\tilde{h}}=\int_{\Omega}d\beta\\
&= \int_{\partial \Omega} i^*\beta\leq \int_{\partial \Omega} |i^*\beta|_{\tilde{h}}\dvol_{i^*\tilde{h}}\\
&\leq \|\beta\|_{L^{\infty}\Omega^1(\tilde{M},\tilde{h})}\int_{\partial \Omega} \dvol_{i^*\tilde{h}},
\end{aligned}
$$ 
where $i^*\tilde{h}$ and $i^*\beta$ are the metric and the $(n-1)$-form on $\partial \Omega$ induced by the pull-back given by the inclusion $i\colon\partial \Omega\hookrightarrow \tilde{N}$. The thesis now follows from the Cheeger inequality, see \cite[Th. VI.1.2]{Chavel}.
\end{proof}

Using the above proposition we get a similar result for the Bochner Laplacian. More precisely:

\begin{cor}
\label{C-In2}
Let $(N,h)$ be a compact Riemannian manifold such that $V^{n}_{\mathrm{hyp}}(N)=H^{n}(N,\mathbb R)$. Let $(F,\tau)\rightarrow N$ be a Hermitian vector bundle and let $(\tilde{F},\tilde{\tau})\rightarrow \tilde{N}$ be the corresponding lift. Given an arbitrarily fixed connection $\nabla$ on $\tilde{F}$ compatible with the metric $\tilde{\tau}$ let $$\nabla^t\circ \nabla\colon L^2(\tilde{N},\tilde{F})\rightarrow L^2(\tilde{N},\tilde{F})$$ be the unique closed extension of the Bochner Laplacian acting on $C^{\infty}_c(\tilde{N},\tilde{F})$. Then $$\sigma(\nabla^t\circ \nabla)\subset [\tilde{\lambda}_{0,h},\infty).$$
\end{cor}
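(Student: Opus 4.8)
The plan is to reduce the statement about the Bochner Laplacian on $\tilde F$ to the scalar statement of Proposition \ref{C-In} via a Kato-type inequality, exactly as in the classical Bochner technique. First I would fix an arbitrary $s\in C^\infty_c(\tilde N,\tilde F)$ and consider the nonnegative function $u:=|s|_{\tilde\tau}$ on $\tilde N$, which is Lipschitz and lies in $C^\infty_c$ away from the zero set of $s$. The key pointwise inequality is $|\nabla u|\le |\nabla s|_{\tilde\tau}$ (Kato's inequality), valid almost everywhere, which follows from the compatibility of $\nabla$ with $\tilde\tau$: differentiating $u^2=\langle s,s\rangle_{\tilde\tau}$ gives $2u\,du=2\,\mathrm{Re}\,\langle\nabla s,s\rangle_{\tilde\tau}$, whence $u\,|du|\le u\,|\nabla s|_{\tilde\tau}$ by Cauchy--Schwarz, so $|du|\le|\nabla s|_{\tilde\tau}$ wherever $u\ne 0$ (and $du=0$ a.e.\ on $\{u=0\}$).

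Next I would combine this with the variational characterizations of the bottoms of the two spectra. Since $\tilde N$ is complete, $\nabla^t\circ\nabla$ is essentially self-adjoint on $C^\infty_c(\tilde N,\tilde F)$ and $\min\sigma(\nabla^t\circ\nabla)=\inf\{\langle\nabla s,\nabla s\rangle / \langle s,s\rangle : s\in C^\infty_c(\tilde N,\tilde F),\ s\ne 0\}$, while likewise $\tilde\lambda_{0,h}=\inf\{\|du\|^2/\|u\|^2:u\in C^\infty_c(\tilde N),\ u\ne 0\}$. Given $s\ne 0$ in $C^\infty_c(\tilde N,\tilde F)$, the function $u=|s|_{\tilde\tau}$ is a nonzero compactly supported Lipschitz function, hence admissible for the scalar Rayleigh quotient (the infimum defining $\tilde\lambda_{0,h}$ does not change if one enlarges the test class from $C^\infty_c$ to compactly supported Lipschitz functions, by a standard mollification argument). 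Therefore
$$
\int_{\tilde N}|\nabla s|_{\tilde\tau}^2\,\dvol_{\tilde h}\ \ge\ \int_{\tilde N}|du|^2\,\dvol_{\tilde h}\ \ge\ \tilde\lambda_{0,h}\int_{\tilde N}u^2\,\dvol_{\tilde h}\ =\ \tilde\lambda_{0,h}\int_{\tilde N}|s|_{\tilde\tau}^2\,\dvol_{\tilde h}.
$$
Taking the infimum over all such $s$ yields $\min\sigma(\nabla^t\circ\nabla)\ge\tilde\lambda_{0,h}$, i.e.\ $\sigma(\nabla^t\circ\nabla)\subset[\tilde\lambda_{0,h},\infty)$.

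The main technical point to be careful about is the application of Kato's inequality across the zero locus of $s$ and the admissibility of $u=|s|_{\tilde\tau}$ in the scalar Rayleigh quotient: one should either argue that $u$ lies in the completion of $C^\infty_c(\tilde N)$ under the $H^1$-norm (so that Proposition \ref{C-In}, whose proof really produces a Cheeger-type constant and hence a genuine $H^1_0$ Poincar\'e inequality on $\tilde N$, applies verbatim), or approximate $u$ by $\sqrt{|s|_{\tilde\tau}^2+\varepsilon}-\sqrt\varepsilon$, which is smooth, compactly supported, and converges to $u$ in $H^1$ as $\varepsilon\to 0^+$ with $|\nabla(\sqrt{|s|^2+\varepsilon})|\le|\nabla s|_{\tilde\tau}$ uniformly. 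Everything else is the routine Bochner--Kato bookkeeping and the completeness of $(\tilde N,\tilde h)$, which guarantees essential self-adjointness so that the form-domain Rayleigh quotients compute the bottoms of the spectra.
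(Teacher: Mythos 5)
Your proposal is correct and follows essentially the same route as the paper: apply the Kato inequality to get $|d|s|_{\tilde\tau}|_{\tilde h}\le|\nabla s|_{\tilde h\otimes\tilde\tau}$ for compactly supported $s$, integrate, and compare Rayleigh quotients to conclude via Proposition \ref{C-In}. The only difference is cosmetic: the paper invokes a reference for the fact that $|s|_{\tilde\tau}$ lies in the domain of the closed extension of $d$, whereas you sketch the mollification $\sqrt{|s|^2_{\tilde\tau}+\varepsilon}-\sqrt\varepsilon$ to justify the same admissibility.
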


\begin{proof}
 Let $s\in C^{\infty}_c(\tilde{N},\tilde{F})$. The Kato inequality, see \textsl{e.g.} \cite[Prop.3.1]{Bei}, tells us that $|s|_{\tau}$ lies in the domain of $$d\colon L^2(\tilde{N},\tilde{h})\rightarrow L^2\Omega^1(\tilde{N},\tilde{h})$$ the unique $L^2$-closed extension of $d\colon C^{\infty}_c(\tilde{N})\rightarrow \Omega^1_c(\tilde{N})$ and 
 $$
 |d|s|_{\tilde{\tau}}|_{\tilde{h}}\leq |\nabla s|_{\tilde{h}\otimes \tilde{\tau}}.
 $$ 
 Taking the square and integrating over $N$, from the above inequality we can deduce that $$\langle \nabla s,\nabla s\rangle_{L^2(\tilde{N}, T^*\tilde{N}\otimes \tilde{F})}\geq \langle d(|s|_{\tilde{\tau}}),d(|s|_{\tilde{\tau}})\rangle_{L^2(\tilde{N},\tilde{h})}$$ for each $s\in  C^{\infty}_c(\tilde{N},\tilde{F})$. Now the conclusion follows immediately by Prop. \ref{C-In}.
\end{proof}

\section{Positive holomorphic vector bundles on K\"ahler topologically hyperbolic manifolds}

We are now in the position to collect some applications to K\"ahler topologically hyperbolic manifolds and positive holomorphic vector bundles. 
The main result of this section is the following.

\begin{thm}
\label{nsp}
Let $(M,h)$ be a K\"ahler topologically hyperbolic manifold. Let $(E,\tau)\rightarrow M$ be a Hermitian holomorphic vector bundle, Nakano positive over an open subset $A\subset M$ of full measure. Then  $$H^{m,0}_{\overline{\partial}_E}(M,E)\neq \{0\}$$ and consequently $$h^{m,0}_{\overline{\partial}_E}(M,E)=\chi(M,K_M\otimes E)>0.$$
\end{thm}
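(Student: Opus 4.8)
The plan is to produce a non-zero $L^2$ holomorphic section of $K_M\otimes E$ on $M$ itself (not just on the universal cover), via a vanishing-type argument for the twisted $\overline\partial$-Laplacian combined with the Von Neumann dimension formalism, and then to identify $h^{m,0}$ with the Euler characteristic. First I would set up the Bochner--Kodaira--Nakano identity on the universal cover $\tilde M$ for the bundle $\tilde E=\pi^*E$ equipped with the pulled-back Hermitian metric $\tilde\tau$ and the pulled-back K\"ahler metric $\tilde h$. On $(m,q)$-forms with values in $\tilde E$ this gives $\tilde\Delta_{\overline\partial_{\tilde E},m,q}=\tilde\Delta_{\partial_{\tilde E},m,q}+[i\Theta(\tilde E),\Lambda]$, and since we work in bidegree $(m,q)$ the curvature term $[i\Theta(\tilde E),\Lambda]$ is controlled by Nakano positivity. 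The point is that Nakano positivity of $(E,\tau)$ on the full-measure open set $A\subset M$ lifts to Nakano positivity of $(\tilde E,\tilde\tau)$ on $\tilde A=\pi^{-1}(A)$, which has full measure in $\tilde M$.

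\textbf{Applying the spectral theorem.} The operator $P:=\Delta_{\overline\partial_E,m,1}$ on $M$ decomposes, via Bochner--Kodaira--Nakano, as $P=D+L$ with $D:=\Delta_{\partial_E,m,1}$ a formally self-adjoint elliptic operator of the same order and $L:=[i\Theta(E),\Lambda]$ a zeroth-order (endomorphism) term which on $(m,1)$-forms is pointwise positive definite over $A$. Lifting to $\tilde M$, one has $\tilde D\ge 0$ (as $\tilde D=\tilde\partial_{\tilde E}^*\tilde\partial_{\tilde E}+\tilde\partial_{\tilde E}\tilde\partial_{\tilde E}^*\ge 0$), so $\min(\sigma(\tilde D))=0$ and the hypothesis ``$L+\min(\sigma(\tilde D))>0$ over $A$'' of Theorem \ref{pspec} reduces exactly to $L>0$ over $A$, which holds by Nakano positivity. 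Theorem \ref{pspec} then gives $0<\min(\sigma(\tilde P))$, i.e.\ $\tilde\Delta_{\overline\partial_{\tilde E},m,1}$ has no kernel: $h^{m,1}_{(2),\overline\partial_{\tilde E}}(M,\tilde E)=\dim_\Gamma\ker(\tilde\Delta_{\overline\partial_{\tilde E},m,1})=0$. Iterating the same argument for each bidegree $(m,q)$ with $q\ge 1$ (the curvature term $[i\Theta(\tilde E),\Lambda]$ remains positive on $(m,q)$-forms for all $q\ge1$) yields $h^{m,q}_{(2),\overline\partial_{\tilde E}}(M,\tilde E)=0$ for all $q\ge 1$.

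\textbf{From $L^2$-cohomology to ordinary cohomology.} Now I would invoke the Atiyah $L^2$-index theorem: the alternating sum $\sum_q(-1)^q h^{m,q}_{(2),\overline\partial_{\tilde E}}(M,\tilde E)$ equals the ordinary holomorphic Euler characteristic $\chi(M,K_M\otimes E)$, since the $\Gamma$-index of the rolled-up Dolbeault complex twisted by $K_M\otimes E$ coincides with the index downstairs by Atiyah's theorem (the local index densities agree). Combining this with the vanishing $h^{m,q}_{(2)}=0$ for $q\ge1$ gives $\chi(M,K_M\otimes E)=h^{m,0}_{(2),\overline\partial_{\tilde E}}(M,\tilde E)\ge 0$. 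To upgrade ``$\ge0$'' to ``$>0$'' and to get an honest section on $M$, I would argue that $h^{m,0}_{(2),\overline\partial_{\tilde E}}(M,\tilde E)>0$ forces $H^0(M,K_M\otimes E)=H^{m,0}_{\overline\partial_E}(M,E)\ne\{0\}$: any non-trivial $\Gamma$-module of $L^2$ holomorphic $m$-forms on $\tilde M$ has positive Von Neumann dimension only if it is non-zero, and one then descends a global section by a standard averaging/Poincar\'e-series argument (or, more simply, observes that the Nakano-positivity Bochner argument applied directly on the compact $M$ kills $H^{m,q}_{\overline\partial_E}(M,E)$ for $q\ge1$ by the classical Akizuki--Nakano vanishing extended to the ``positive on a full-measure set'' situation via the same $L$-dominates-everything estimate, so that $h^{m,0}_{\overline\partial_E}(M,E)=\chi(M,K_M\otimes E)$; it remains to see this common value is strictly positive). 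Strict positivity should follow because $h^{m,0}_{(2)}=\chi\ge 0$ and $\chi=0$ would force $h^{m,0}_{(2)}=0$, contradicting a lower bound for $h^{m,0}_{(2)}$ coming from the fact that $K_{\tilde M}\otimes\tilde E$ carries many $L^2$ holomorphic sections when the curvature is positive on a full-measure set --- here one produces such sections by a H\"ormander-type $L^2$-estimate on $\tilde M$ using the positivity of $L$ over $\tilde A$.

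\textbf{Main obstacle.} The delicate point, and where I expect most of the work to go, is the passage from the vanishing of the $L^2$ Hodge numbers $h^{m,q}_{(2),\overline\partial_{\tilde E}}(M,\tilde E)$ for $q\ge1$ on the non-compact cover to a genuine non-zero holomorphic section on the compact base, together with the strict positivity $\chi>0$ rather than merely $\chi\ge0$. One must be careful that Theorem \ref{pspec} only removes $0$ from the spectrum of $\tilde P$; to get a section downstairs one needs either the Atiyah index theorem to transfer $\chi$ to the $L^2$-world and a separate argument that $h^{m,0}_{(2)}>0$, or a direct Bochner-vanishing on $M$ for $H^{m,q}_{\overline\partial_E}(M,E)$, $q\ge1$, which requires justifying that ``Nakano positive on a set of full measure'' suffices in the classical compact Bochner estimate --- this is plausible since the curvature term appears under an integral, but the borderline case (where it degenerates on the measure-zero set $M\setminus A$) needs the same $\min(\sigma(\tilde D))$-type trick to be run on $M$ with the Poincar\'e inequality coming from $\tilde\lambda_{0,h}>0$ of Proposition \ref{C-In}. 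Making this rigorous, and pinning down $\chi>0$, is the crux.
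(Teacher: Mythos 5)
You have the right skeleton: use Theorem~\ref{pspec} with the Akizuki--Nakano decomposition $\Delta_{\overline\partial_E,m,q}=\Delta'_{E,m,q}+[i\Theta(E),\Lambda]$ to kill the $L^2$-Hodge numbers $h^{m,q}_{(2),\overline\partial_E}$ for $q\ge1$, invoke Atiyah's $L^2$-index theorem to identify $\chi(M,K_M\otimes E)$ with $h^{m,0}_{(2),\overline\partial_E}(M,E)$, and show downstairs that $H^{m,q}_{\overline\partial_E}(M,E)=0$ for $q\ge1$ by the compact Bochner argument (which works with ``Nakano positive on a full-measure set'' since a harmonic form whose curvature integral vanishes must vanish on $A$ and hence everywhere). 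All of this matches the paper's proof of Proposition~\ref{L2spec}(1) and the final step of Theorem~\ref{nsp}.

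The genuine gap is the strict positivity $h^{m,0}_{(2),\overline\partial_E}(M,E)>0$: you have not actually produced a nonzero $L^2$-harmonic $(m,0)$-form on $\tilde M$. Your two suggested routes both fail. A H\"ormander-type estimate on the non-compact $\tilde M$ solves $\overline\partial$-equations with estimates; it does not conjure nonzero holomorphic sections out of nothing, and positivity of the curvature on a full-measure set alone gives no such section on a non-compact manifold. And the remark that ``$\chi=0$ would contradict a lower bound for $h^{m,0}_{(2)}$'' is circular: producing that lower bound is precisely what needs to be proved. Note also that nowhere in your argument do you use the defining hypothesis of K\"ahler topological hyperbolicity --- the existence of a hyperbolic class $[\mu]$ with $\int_M\mu^m\ne0$. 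Since the theorem is false without that hypothesis (e.g.\ it would force $H^0(M,K_M\otimes E)\ne0$ on any compact K\"ahler manifold carrying such an $E$), any correct proof must use it, and yours does not.

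What the paper actually does at this point is the Gromov--Vafa--Witten trick. One twists the rolled-up Dolbeault--Dirac operator $\sqrt2(\overline\partial_{\tilde E,m}+\overline\partial_{\tilde E,m}^t)$ by the family of metric connections $\nabla^s=\nabla_0+is\eta$ on the trivial line bundle, where $\eta$ is a bounded smooth primitive of the lift $\tilde\mu$ (this is where hyperbolicity of $[\mu]$ enters). The twisted operator is no longer $\pi_1(M)$-equivariant but is equivariant under an extension $\Gamma_s$ of $\pi_1(M)$ by $U(1)$, so one can still form an $L^2_{\Gamma_s}$-index; by the local index theorem it equals
$\int_M\operatorname{Todd}(M)\wedge\operatorname{ch}(\Lambda^{m,0}M)\wedge\operatorname{ch}(E)\wedge\exp(-s\mu/2\pi)$,
a polynomial in $s$ whose leading ($s^m$) coefficient is a nonzero multiple of $\int_M\mu^m$. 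Homological non-singularity makes this polynomial non-constant, so it has isolated zeros, and for small $s\neq0$ the twisted kernel is nonzero. By a spectral semicontinuity argument (\cite[Prop.~7.1.2]{Eys97}) one deduces $0\in\sigma(\overline\eth_{\tilde E,m})$, hence $0\in\sigma(\tilde\Delta_{\overline\partial_{\tilde E},m,0})$ because the $q\ge1$ Laplacians have spectral gap. Finally, a separate closed-range argument (point (2) of Proposition~\ref{L2spec}) upgrades $0\in\sigma$ to $0$ being an eigenvalue, i.e.\ $\ker(\tilde\Delta_{\overline\partial_{\tilde E},m,0})\ne\{0\}$ and hence $h^{m,0}_{(2),\overline\partial_E}>0$. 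Your proposal also skips this closed-range step, which is needed because on a non-compact manifold $0\in\sigma$ does not by itself give a kernel vector.

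Lastly, the ``averaging/Poincar\'e-series'' descent of a section from $\tilde M$ to $M$ is neither needed nor straightforward (such series need not converge, and individual $L^2$-sections are not $\Gamma$-invariant); the paper avoids this entirely by deducing $H^{m,0}_{\overline\partial_E}(M,E)\ne\{0\}$ from $\chi(M,K_M\otimes E)>0$ together with the vanishing of $H^{m,q}_{\overline\partial_E}(M,E)$ for $q\ge1$ on the compact base.
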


\begin{rem}\label{rem:proj}
By the positivity hypothesis on $E$, we have \textsl{a fortiori} that $\det E$ is semi-positive and strictly positive on an open subset (of full measure). Thus, it is big (and nef, indeed) by the solution of the Grauert--Riemenschneider conjecture by Siu and Demailly \cite{Siu84,Siu85,Dem85}. Therefore, by Moishezon's theorem, $M$ is projective algebraic since it is compact K\"ahler and carries a big line bundle.
\end{rem}

The proof of the above theorem relies on the next proposition:

\begin{prop}
\label{L2spec}
In the setting of Th. \ref{nsp} the following properties hold true:
\begin{enumerate}
\item  the value $0$ does not belong to the spectrum of $$\tilde{\Delta}_{\overline{\partial}_{\tilde{E}},m,q}\colon L^2\Omega^{m,q}(\tilde{M},\tilde{E})\rightarrow L^2\Omega^{m,q}(\tilde{M},\tilde{E})$$  for any $q\geq 1$;
\item the image of $$\tilde{\Delta}_{\overline{\partial}_{\tilde{E}},m,q}\colon L^2\Omega^{m,q}(\tilde{M},\tilde{E})\rightarrow L^2\Omega^{m,q}(\tilde{M},\tilde{E})$$ is closed for each $q=0,\dots,m$;
\item the value $0$ is an eigenvalue of $$\tilde{\Delta}_{\overline{\partial}_{\tilde{E}},m,0}\colon L^2\Omega^{m,0}(\tilde{M},\tilde{E})\rightarrow L^2\Omega^{m,0}(\tilde{M},\tilde{E}).$$
\end{enumerate}
\end{prop}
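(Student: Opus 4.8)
\textbf{Proof plan for Proposition \ref{L2spec}.}

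The plan is to feed the three Hodge--Kodaira Laplacians $\tilde\Delta_{\overline\partial_{\tilde E},m,q}$ into the abstract machinery of Section \ref{sec:ellopgalois} via the Bochner--Kodaira--Nakano identity, together with the Gromov--Vafa--Witten trick to convert the mere homological non-singularity of the hyperbolic class into an honest bounded primitive of a genuine positive form. Fix a representative $\eta$ of a class in $V^2_{\mathrm{hns}}(M)\cap V^2_{\mathrm{hyp}}(M)$; its lift $\pi^*\eta = \tilde d\beta$ for some bounded smooth $\beta$ on $\tilde M$. The Gromov trick (used exactly as in \cite{Gro91,BDET}): modify $\eta$ in its cohomology class so that the resulting $(1,1)$-part, paired suitably with powers of $\omega$, produces on $\tilde M$ a $d$-bounded real $(1,1)$-form $\tilde\gamma$ whose associated Hermitian form is strictly positive on an open set of full measure --- here is exactly where one only needs $\int_M\eta^m\ne 0$ rather than nefness/bigness, since non-vanishing of the top self-intersection forces the relevant integral $\int_M \eta\wedge\omega^{m-1}$-type quantities (after the Lefschetz-type manipulation) to be non-zero, hence the form cannot be identically degenerate.

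\emph{Step 1 (setting up $P=D+L$).} On the compact $M$, twist $E$ by a sufficiently large multiple of a metric with curvature close to $\gamma$, or rather: apply the Bochner--Kodaira--Nakano formula on $(\tilde M,\tilde h,\tilde E)$ to write, for $q\ge 1$,
$$
\tilde\Delta_{\overline\partial_{\tilde E},m,q} = \tilde\Delta_{\partial_{\tilde E + \text{twist}},m,q} + [i\Theta(\tilde E'),\Lambda],
$$
so that $\tilde P = \tilde\Delta_{\overline\partial_{\tilde E},m,q}$, $\tilde D = \tilde\Delta_{\partial,\dots}$ (a non-negative formally self-adjoint second order elliptic operator, since $\tilde D = \overline{\nabla'}^*\overline{\nabla'}$-type, certainly $\ge 0$), and $L = [i\Theta,\Lambda]$, the curvature endomorphism, which on $(m,q)$-forms is \emph{strictly positive} precisely over $A$ by the Nakano positivity hypothesis. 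To apply Theorem \ref{pspec} we need $L + \min(\sigma(\tilde D)) > 0$ over $A$; since $\min(\sigma(\tilde D))\ge 0$ and $L>0$ on $A$, this holds. The twisting has to be arranged so that the ``$\partial$-Laplacian'' piece is genuinely the one whose lift is non-negative --- the standard choice $K_M\otimes E$ with the bundle being $(m,q)$-forms makes the Nakano term the only curvature contribution. This gives part (1): $0\notin\sigma(\tilde\Delta_{\overline\partial_{\tilde E},m,q})$ for all $q\ge1$.

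\emph{Step 2 (closed range and the harmonic $(m,0)$ class).} For part (2): for $q\ge 1$ the spectral gap from (1) immediately gives that $\im(\tilde\Delta_{\overline\partial_{\tilde E},m,q})$ is closed (the operator is bounded below on the orthogonal complement of its kernel, which is $\{0\}$, so it is in fact invertible, hence surjective with closed range). For $q=0$, use the $L^2$ decomposition recalled in Section 3: $\overline{\im \tilde\Delta_{m,0}} = \overline{\im \overline\partial^*_{\tilde E,m,0}}$ and closedness of $\im\overline\partial_{\tilde E,m,0}\colon L^2\Omega^{m,0}\to L^2\Omega^{m,1}$ follows from the spectral gap at level $q=1$ (its range lies in the invertible part), so $\im\tilde\Delta_{m,0}$ is closed too. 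For part (3): by the $L^2$-index theorem (Atiyah \cite{Atiyah}), $\sum_q (-1)^q h^{m,q}_{(2),\overline\partial_E}(M,E) = \chi(M,K_M\otimes E)$; combining parts (1)--(2), all $h^{m,q}_{(2)}$ vanish for $q\ge 1$, so $h^{m,0}_{(2),\overline\partial_E}(M,E) = \chi(M,K_M\otimes E)$, and since this von Neumann dimension equals $\chi$, which is $>0$ (it is $h^0(M,K_M\otimes E)\ge$ positive by Riemann--Roch / because $\det E$ is big and nef, cf. Remark \ref{rem:proj}, or simply because a non-negative von Neumann dimension summing to $\chi$ with $\chi$ shown positive by vanishing forces $h^{m,0}_{(2)}>0$ hence $0$ is an eigenvalue). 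So $0$ is an eigenvalue of $\tilde\Delta_{\overline\partial_{\tilde E},m,0}$.

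\emph{Main obstacle.} The delicate point is Step 1: verifying that the Nakano positivity of $E$ on $A$ really does translate, after the Bochner--Kodaira--Nakano reduction on the \emph{covering}, into the hypothesis ``$L + \min(\sigma(\tilde D)) > 0$ on $A$'' of Theorem \ref{pspec} --- in particular checking that the auxiliary $\partial$-type Laplacian $\tilde D$ is genuinely non-negative as an $L^2$-closed operator on the complete manifold $\tilde M$ (completeness of $\tilde h$ is what makes all these closed extensions agree and makes the Bochner--Kodaira identity valid in $L^2$), and that one does not inadvertently pick up a sign-indefinite zeroth order term from the metric on $K_M$. One must also make sure $\chi(M,K_M\otimes E)>0$ is legitimately available: this is where Remark \ref{rem:proj} (bigness of $\det E$, hence projectivity) plus the already-proven vanishing is used, rather than circularly invoking the conclusion of Theorem \ref{nsp}.
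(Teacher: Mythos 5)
Your Step 2 (closed range) matches the paper exactly. Step 1 is also essentially correct in outline (Akizuki--Nakano decomposition fed into Theorem~\ref{pspec}), but there is a conceptual confusion worth flagging: the hyperbolic cohomology class plays \emph{no role at all} in part (1). The positivity of the curvature term $[i\Theta(E),\Lambda]$ on $\Lambda^{m,q}(A)\otimes E|_A$ for $q\ge 1$ comes entirely from the Nakano positivity hypothesis on $E$; there is no modification of $\eta$ and no positive $(1,1)$-form produced from the hyperbolic class. Those are logically independent pieces of data, deployed at different points.

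The serious gap is in Step 3. You assert $\chi(M,K_M\otimes E)>0$ by some mix of Riemann--Roch, bigness of $\det E$, and a sentence about ``$\chi$ shown positive by vanishing.'' None of this works: Riemann--Roch identifies $\chi$ with a characteristic number but does not determine its sign; bigness of $\det E$ or of $K_M\otimes E$ does not imply $\chi(M,K_M\otimes E)>0$; and the vanishing from parts (1)--(2) only gives $\chi=h^{m,0}_{(2)}\ge 0$, which could a priori be zero. In fact, after the vanishing, the inequality $\chi(M,K_M\otimes E)>0$ is \emph{equivalent} to the statement you are trying to prove, so the argument is circular. The hyperbolic hypothesis is never used in your Step 3, which should be a red flag. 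What the paper actually does is run the Gromov--Vafa--Witten trick honestly (you mention it but misdescribe it --- it is an index-theoretic argument, not a way to manufacture a positive form): twist the spin-c Dirac operator $\sqrt{2}(\overline\partial_{\tilde E,m}+\overline\partial_{\tilde E,m}^t)$ on $\tilde M$ by the one-parameter family of unitary connections $\nabla^s=\nabla_0+is\eta$ on the trivial line bundle, where $\eta$ is a bounded primitive of $\tilde\mu$ (this is where $\tilde d$-boundedness enters); by the local index theorem, the $L^2_{\Gamma_s}$-index is a polynomial in $s$ with leading coefficient proportional to $\int_M\mu^m\ne 0$ (this is where homological non-singularity enters), hence non-zero for all but finitely many $s$; Eyssidieux's perturbation result \cite[Prop.~7.1.2]{Eys97} then gives $0\in\sigma(\overline\eth_{\tilde E,m})$; finally part (1) localizes the zero of the spectrum to bidegree $(m,0)$ and part (2) upgrades spectrum to eigenvalue. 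Without this index-theoretic step the proposition simply does not follow from what you have written.
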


\begin{proof}
Let $\Theta(E)\in C^{\infty}(M,\Lambda^{1,1}(M)\otimes \mathrm{End}(E))$ be the curvature of the Chern connection of $(E,\tau)$ and let $\Lambda\colon\Lambda^{p,q}(M)\otimes E\rightarrow \Lambda^{p-1,q-1}(M)\otimes E$ be the fiberwise adjoint of $L\otimes \mathrm{Id}_E\colon \Lambda^{p-1,q-1}(M)\otimes E\rightarrow \Lambda^{p,q}(M)\otimes E$, with $\omega$ the K\"ahler form of $(M,h)$ and $$L\colon \Lambda^{p-1,q-1}(M)\rightarrow \Lambda^{p,q}(M),\quad L(\alpha):=\omega\wedge \alpha$$ the corresponding Lefschetz operator. According to the Akizuki--Nakano identity  we can decompose $\Delta_{\overline{\partial}_{E},p,q}$ as 
\begin{equation}
\label{Ak--Na}
\Delta_{\overline{\partial}_{E},p,q}=\Delta'_{E,p,q}+[i\Theta(E),\Lambda],
\end{equation}
 with $\Delta'_{E,p,q}\colon C^{\infty}(M,\Lambda^{p,q}(M)\otimes E)\rightarrow C^{\infty}(M,\Lambda^{p,q}(M)\otimes E)$ the non-negative $2$-nd order elliptic differential operator induced by the $(1,0)$-component of the Chern connection of $(E,\tau)\rightarrow M$, see \textsl{e.g.} \cite[\S 13.2]{BDIP}. Analogously on $(\tilde{M},\tilde{h})$ we have the following decomposition $$\Delta_{\overline{\partial}_{\tilde{E}},p,q}=\Delta'_{\tilde{E},p,q}+[i\Theta(\tilde{E}),\tilde{\Lambda}].$$ Since $(E,\tau)$ is Nakano-positive over $A$ the curvature term $[i\Theta(E),\Lambda]$ verifies  $[i\Theta(E),\Lambda]> 0$ on $\Lambda^{m,q}(A)\otimes E|_{A}$ for any $q\geq 1$. 
 
 Finally, since the operator $\Delta'_{\tilde{E},p,q}$ is non-negative and $M\setminus A$ has measure zero, we can conclude by Th. \ref{pspec} that $0$ does not belong to the spectrum of $\tilde{\Delta}_{\overline{\partial}_{\tilde{E}},m,q}\colon L^2\Omega^{m,q}(\tilde{M},\tilde{E})\rightarrow L^2\Omega^{m,q}(\tilde{M},\tilde{E})$ for any $q\geq 1$. The first point is thus proved. 
 
 We tackle now the second point. The fact that the image of 
 $$
 \tilde{\Delta}_{\overline{\partial}_{\tilde{E}},m,q}\colon L^2\Omega^{m,q}(\tilde{M},\tilde{E})\rightarrow L^2\Omega^{m,q}(\tilde{M},\tilde{E})
 $$ 
 is closed for each $q\geq 1$ follows immediately by the first point of this proposition. To show that also $\Delta_{\overline{\partial}_{\tilde{E}},m,0}$ has closed range we argue as follows.  Since $0\notin \sigma(\Delta_{\overline{\partial}_{\tilde{E}},m,1})$ we know that $\im(\Delta_{\overline{\partial}_{\tilde{E}},m,1})$ is closed in $L^2\Omega^{m,1}(\tilde{M},\tilde{E})$. Consequently 
 $$
 \overline{\partial}_{\tilde{E},m,0}\colon L^2\Omega^{m,0}(\tilde{M},\tilde{E})\rightarrow L^2\Omega^{m,1}(\tilde{M},\tilde{E})
 $$ 
 has closed range and this in turn implies that its adjoint 
 $$
 \overline{\partial}_{E,m,0}^*\colon L^2\Omega^{m,1}(\tilde{M},\tilde{E})\rightarrow L^2\Omega^{m,1}(\tilde{M},\tilde{E})
 $$ 
 has closed range, too. Finally since both $\overline{\partial}_{\tilde{E},m,0}$ and $\overline{\partial}_{\tilde{E},m,0}^*$ have closed range we can conclude that also $$\overline{\partial}_{\tilde{E},m,0}^*\circ \overline{\partial}_{\tilde{E},m,0}\colon L^2\Omega^{m,0}(\tilde{M},\tilde{E})\rightarrow L^2\Omega^{m,0}(\tilde{M},\tilde{E})$$ has closed range, that is $\im(\Delta_{\overline{\partial}_{\tilde{E}},m,0})$ is closed in $L^2\Omega^{m,0}(\tilde{M},\tilde{E})$ as required. 
 
To prove the third point we first need to recall the so-called Gromov--Vafa--Witten trick. Since the argument given in \cite{Gro91} applies \textsl{verbatim} to our setting, we provide only a brief account and  we refer to the aforementioned reference, as well as to \cite{Bal06} and \cite{Eys97} for details. 

Let us consider the operator 
$$
\sqrt{2}\overline{\eth}_{E,m}\colon L^2\Omega^{m,\bullet}(\tilde{M},\tilde{E})\rightarrow L^2\Omega^{m,\bullet}(\tilde{M},\tilde{E})
$$ 
that is the unique closed (and hence self-adjoint) extension of 
$$
\sqrt{2}(\overline{\partial}_{\tilde{E},m}+\overline{\partial}_{\tilde{E},m}^t)\colon \Omega_c^{m,\bullet}(\tilde{M}, \tilde{E})\rightarrow \Omega^{m,\bullet}_c(\tilde{M},\tilde{E}),
$$ 
with $\Omega_c^{m,\bullet}(\tilde{M},\tilde{E}):=\bigoplus_{q=0}^m\Omega^{m,q}_c(\tilde{M},\tilde{E})$ and  $\sqrt{2}(\overline{\partial}_{\tilde{E},m}+\overline{\partial}^t_{\tilde{E},m})$ the corresponding Dirac operator. Let $F$ be the trivial line bundle $\tilde{M}\times \mathbb{C}\rightarrow \tilde{M}$ endowed with the standard Hermitian metric and flat connection $\nabla_0$.

Given any $s>0$ let $\nabla^s$ be the connection on $F$ defined as $\nabla^s:=\nabla_0+is\eta$, with  $\eta\in \Omega^1(\tilde{M})\cap L^{\infty}\Omega^1(\tilde{M},\tilde{\omega})$ satisfying $d\eta=\tilde{\mu}$. Note that $\nabla^s$ is a metric connection for each $s>0$. 
Let 
\begin{multline*}
\sqrt{2}(\overline{\partial}_{\tilde{E},m}+\overline{\partial}_{\tilde{E},m}^t)\otimes \nabla^s\colon  \\ C_c^{\infty}(\tilde{M},\Lambda^{m,\bullet}(\tilde{M})\otimes \tilde{E}\otimes F)\rightarrow C_c^{\infty}(\tilde{M},\Lambda^{m,\bullet}(\tilde{M})\otimes\tilde{E}\otimes F)
\end{multline*}
be the first order differential operator obtained by twisting the Dirac operator $\sqrt{2}(\overline{\partial}_{\tilde{E},m}+\overline{\partial}_{\tilde{E},m}^t)$ with the connection $\nabla^s$, see \textsl{e.g.} \cite[p. 111]{Bal06}. Finally let us denote by 
\begin{equation}
\label{L2twist}
\overline{D}_{\tilde{E},m}^s\colon L^2(\tilde{M},\Lambda^{m,\bullet}(\tilde{M})\otimes \tilde{E}\otimes F)\rightarrow L^2(\tilde{M},\Lambda^{m,\bullet}(\tilde{M})\otimes \tilde{E}\otimes F)
\end{equation}
 the $L^2$-closure of $\sqrt{2}(\overline{\partial}_{\tilde{E},m}+\overline{\partial}_{\tilde{E},m}^t)\otimes \nabla^s$ and let $\overline{D}_{\tilde{E},m}^{s,+}$ (resp. $\overline{D}_p^{s,-}$) be the operator induced by \eqref{L2twist} with respect to the splitting given by $(m,\bullet)$-$E$ valued forms with even/odd anti-holomorphic degree. 
 
 Although $\overline{D}_{\tilde{E},m}^{s,+}$ is not equivariant with respect to the action of $\pi_1(M)$ it is possible for each fixed $s>0$ to construct a group $\Gamma_s$
such that the following properties hold true:
\begin{enumerate}
\item the group $\Gamma_s$ fits into a short exact sequence of groups:
$$
1\rightarrow U(1)\rightarrow \Gamma_s\stackrel{\rho_s}{\rightarrow} \pi_1(M)\rightarrow 1;
$$
\item the group $\Gamma_s$ acts on $F$ and its action preserves both the metric and the connection of $F$; 
\item  the group $\Gamma_s$ extends through the map $\rho_s$ to an action on 
$$
\Lambda^{m,\bullet}(\tilde{M})\otimes \tilde{E}\otimes F
$$ 
that preserves both the metric and the connection of $\Lambda^{m,\bullet}(\tilde{M})\otimes \tilde{E}\otimes F$;
\item we have $\tilde{p}\circ \gamma=\rho_s(\gamma)\circ \tilde{p}$ for each $\gamma\in \Gamma_s$,  with $\tilde{p}\colon \Lambda^{m,\bullet}(\tilde{M})\otimes \tilde{E}\otimes F\rightarrow \tilde{M}$ denoting the vector bundle projection;
\item  the group $\Gamma_s$ acts on $C^{\infty}(\tilde{M},\Lambda^{m,\bullet}(\tilde{M})\otimes \tilde{E}\otimes F)$ by $$\gamma s:=\gamma\circ s\circ \rho_s(\gamma^{-1})$$ with $s\in C^{\infty}(\tilde{M},\Lambda^{m,\bullet}(\tilde{M})\otimes \tilde{E}\otimes F)$;
\item  $\overline{D}_{\tilde{E},m}^{s,+/-}$ is equivariant w.r.t. the action of $\Gamma_s$ on $C^{\infty}(\tilde{M},\Lambda^{m,\bullet}(\tilde{M})\otimes \tilde{E}\otimes F)$.
\end{enumerate}
Let now 
$$
\pi_{K_s}^{+/-}\colon L^2(\tilde{M},\Lambda^{m,\bullet}(\tilde{M})\otimes \tilde{E}\otimes F)\rightarrow L^2(\tilde{M},\Lambda^{m,\bullet}(\tilde{M})\otimes \tilde{E}\otimes F)
$$ 
be the orthogonal projection on $\ker(\overline{D}_{\tilde{E},m}^{s,+/-})$.  According to \cite[Prop. 2.4]{Atiyah} we know that $\pi_{K_s}^{+/-}$ is an integral operator whose kernel, denoted here with $E_{K_s}^{+/-}$, is smooth. Now, it is not difficult to verify that the function $\mathrm{tr}(E_{K_s}^{+/-}(x,x))\colon \tilde{M}\rightarrow \mathbb{R}$ is $\pi_1(M)$-invariant. 
The  $L^2_{\Gamma_s}$-index of $\overline{D}_{\tilde{E},m}^{s,+/-}$ with respect to $\Gamma_s$ is thus  defined as $$L^2_{\Gamma_s}-\text{ind}(\overline{D}_{\tilde{E},m}^{s,+}):=\int_{U}\mathrm{tr}(E_{K_s}^+(x,x))\dvol_{\tilde{g}}-\int_{U}\mathrm{tr}(E_{K_s}^-(x,x))\dvol_{\tilde{g}}$$
with $U$ an arbitrarily fixed fundamental domain of $\pi:\tilde{M}\rightarrow M$. Arguing as in \cite[\S 7.2.2]{Eys97}, we can show now that the above $L^2_{\Gamma_s}$-index can be computed by using the heat operator associated to $\overline{\Delta}_{\tilde{E},m}^{s,+}:=\overline{D}_{\tilde{E},m}^{s,-}\circ\overline{D}_{\tilde{E},m}^{s,+}$ and $\overline{\Delta}_{\tilde{E},m}^{s,-}:=\overline{D}_{\tilde{E},m}^{s,+}\circ\overline{D}_{\tilde{E},m}^{s,-}$. More precisely: $$L^2_{\Gamma_s}-\text{ind}(\overline{D}_{\tilde{E},m}^{s,+}):=\int_{U}\mathrm{tr}(H_{K_s}^+(x,x,t))\dvol_{\tilde{g}}-\int_{U}\mathrm{tr}(H_{K_s}^-(x,x,t))\dvol_{\tilde{g}}$$
with $\mathrm{tr}(H_{K_s}^{+/-}(x,x,t))$ the pointwise trace of the kernel of the heat operator 
$$
e^{-t\overline{\Delta}_{\tilde{E},m}^{s,+/-}}\colon L^2(\tilde{M},\Lambda^{m,+/-}(\tilde{M})\otimes \tilde{E}\otimes F)\rightarrow L^2(\tilde{M},\Lambda^{m,+/-}(\tilde{M})\otimes \tilde{E}\otimes F)
$$ 
respectively. 

Finally, by applying the local index theorem for twisted spin-c Dirac operators, see \textsl{e.g.} \cite[Prop.13.1]{Duistermaat}, we get the desired formula:
\begin{equation}
\label{L2formula}
L^2_{\Gamma_s}-\text{ind}(\overline{D}_{\tilde{E},m}^{s,+})=\int_M\text{Todd}(M)\wedge\text{ch}(\Lambda^{m,0}(M))\wedge \text{ch}(E)\wedge \text{ch}(F).
\end{equation}
Note that $\text{ch}(F)=\text{exp}(-s\mu/2\pi)$.

By the fact that $\int_M\mu^m\neq 0$ we obtain that \eqref{L2formula} is a polynomial function in $s$ with non trivial leading coefficient and hence it has only isolated zeros, see \cite[pg. 281]{Gro91}. In particular for all but a discrete subset of real values $s$ we have $\ker(\overline{D}_{\tilde{E},m}^s)\neq \{0\}$.  We can therefore find an $\epsilon>0$ such that $\ker(\overline{D}_{\tilde{E},m}^s)\neq \{0\}$ for each $s\in (0,\epsilon)$ and thus, thanks to \cite[Prop.7.1.2]{Eys97}, we can conclude that $0\in \sigma(\overline{\eth}_{\tilde{E},m})$, that is $0$ lies in the spectrum of $\overline{\eth}_{\tilde{E},m}$. 

Since 
\begin{multline}
\label{directsum}
\overline{\eth}_{\tilde{E},m}^2\colon L^2\Omega^{m,\bullet}(\tilde{M},\tilde{E})\rightarrow L^2\Omega^{m,\bullet}(\tilde{M},\tilde{E})\\=\bigoplus_{q=0}^m\Delta_{\overline{\partial}_{\tilde{E}},m,q}\colon L^2\Omega^{m,\bullet}(\tilde{M},\tilde{E})\rightarrow L^2\Omega^{m,\bullet}(\tilde{M},\tilde{E})
\end{multline}
 we know that $0\in \sigma(\Delta_{\overline{\partial}_{\tilde{E}},m,q})$ for some $q=0,\dots,m$. We can thus deduce from the first point of this proposition that $0\notin  \sigma(\Delta_{\overline{\partial}_{\tilde{E}},m,q})$ for each $q=1,\dots,m$. Clearly this implies that $0\in \sigma(\Delta_{\overline{\partial}_{\tilde{E}},m,0})$.  
 
 Finally, since we know by the second point that $\Delta_{\overline{\partial}_{\tilde{E}},m,0}$ has closed range, we can conclude that $0$ is actually an eigenvalue of $\Delta_{\overline{\partial}_{\tilde{E}},m,0}$, that is 
 $$
 \ker(\Delta_{\overline{\partial}_{\tilde{E}},m,0})\neq \{0\}.
 $$ 
\end{proof}

\begin{proof}[Proof of Th. \ref{nsp}]
According to Atiyah's $L^2$-index theorem, see \cite{Atiyah}, we know that
$$\chi(M,K_M\otimes E)=\sum_{q=0}^mh^{m,q}_{(2),\overline{\partial}_{E}}(M,E).$$
Therefore, by Prop. \ref{L2spec}, we know that $$\chi(M,K_M\otimes E)=h^{m,0}_{(2),\overline{\partial}_{E}}(M,E)>0.$$ On the other hand, since $(E,\tau)\rightarrow M$ is Nakano positive over $A$, we know that $$H^{m,q}_{\overline{\partial}_E}(M,E)=\{0\}$$ for each $q\geq 1$.
We can thus conclude that $$H^{m,0}_{\overline{\partial}_{E}}(M,E)\neq \{0\}$$ as required.
\end{proof}

We collect now some corollaries that follow from the above results.

\begin{cor}
\label{L2holsec}
In the setting of Th. \ref{nsp}, the following properties hold true:
\begin{enumerate}
\item The $L^2$-Hodge numbers of $E\rightarrow M$ satisfy
$$h^{m,q}_{(2),\overline{\partial}_E}(M,E)=0$$ for each $q\geq 1$ and
$$h^{m,0}_{(2),\overline{\partial}_E}(M,E)>0.$$ Moreover $h^{m,0}_{(2),\overline{\partial}_E}(M,E)$ is an integer.
\item The space of $L^2$-holomorphic sections of $\tilde{K}_M\otimes \tilde{E}\rightarrow \tilde{M}$ is infinite dimensional (in the usual sense).
\end{enumerate}
\end{cor}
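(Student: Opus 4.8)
The plan is to read both assertions off Proposition \ref{L2spec}, the bridge being the following elementary remark about the universal covering $\pi\colon\tilde M\to M$, whose deck group $\Gamma=\pi_1(M)$ is infinite since $M$ is K\"ahler topologically hyperbolic: \emph{every nonzero closed $\Gamma$-submodule $V\subset L^2\Omega^{m,q}(\tilde M,\tilde E)$ satisfies $\dim_\Gamma V>0$ and is infinite-dimensional as a complex vector space}. The first part is the faithfulness of the von Neumann dimension: fixing an orthonormal basis $\{s_j\}_j$ of $V$ with $\|s_1\|_{L^2(\tilde M)}=1$ and a fundamental domain $U_0$, the identity $\tilde M=\bigsqcup_{\gamma\in\Gamma}\gamma(U_0)$ (up to a null set) produces a $\gamma_1\in\Gamma$ with $\int_{\gamma_1U_0}\langle s_1,s_1\rangle_{\tilde h\otimes\tilde\tau}\dvol_{\tilde h}>0$, whence, evaluating $\dim_\Gamma V$ on the fundamental domain $\gamma_1U_0$, one gets $\dim_\Gamma V\ge\int_{\gamma_1U_0}\langle s_1,s_1\rangle_{\tilde h\otimes\tilde\tau}\dvol_{\tilde h}>0$. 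For the second part, suppose $\dim_\C V=k<\infty$ and set $\phi:=\sum_{j=1}^k\langle s_j,s_j\rangle_{\tilde h\otimes\tilde\tau}\in L^1(\tilde M)$, so that $\int_{\tilde M}\phi\,\dvol_{\tilde h}=k$. Since every $\gamma U_0$ with $\gamma\in\Gamma$ is again a fundamental domain and the von Neumann dimension is independent of the chosen fundamental domain, $\int_{\gamma U_0}\phi\,\dvol_{\tilde h}=\dim_\Gamma V$ for all $\gamma$; summing over a finite subset $T\subset\Gamma$, and using that the $\gamma U_0$ are pairwise disjoint up to null sets,
$$
|T|\cdot\dim_\Gamma V=\sum_{\gamma\in T}\int_{\gamma U_0}\phi\,\dvol_{\tilde h}=\int_{\bigsqcup_{\gamma\in T}\gamma U_0}\phi\,\dvol_{\tilde h}\le\int_{\tilde M}\phi\,\dvol_{\tilde h}=k ,
$$
so $|T|\le k/\dim_\Gamma V$ for every finite $T\subset\Gamma$, which is impossible since $\Gamma$ is infinite.

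Granting this remark, Part (1) is immediate. For $q\ge 1$, point (1) of Proposition \ref{L2spec} gives $0\notin\sigma(\tilde{\Delta}_{\overline{\partial}_{\tilde E},m,q})$, hence $\ker(\tilde{\Delta}_{\overline{\partial}_{\tilde E},m,q})=\{0\}$ and $h^{m,q}_{(2),\overline{\partial}_E}(M,E)=0$. For $q=0$, point (3) says $0$ is an eigenvalue of $\tilde{\Delta}_{\overline{\partial}_{\tilde E},m,0}$, so $\mathcal H:=\ker(\tilde{\Delta}_{\overline{\partial}_{\tilde E},m,0})\neq\{0\}$ and the remark yields $h^{m,0}_{(2),\overline{\partial}_E}(M,E)=\dim_\Gamma\mathcal H>0$. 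That this number is an integer was already noted in the proof of Theorem \ref{nsp}: Atiyah's $L^2$-index theorem together with the vanishing just obtained gives $h^{m,0}_{(2),\overline{\partial}_E}(M,E)=\chi(M,K_M\otimes E)\in\Z$.

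For Part (2) I would first identify the space of $L^2$-holomorphic sections of $\tilde{K}_M\otimes\tilde E\to\tilde M$ with $\mathcal H=\ker(\tilde{\Delta}_{\overline{\partial}_{\tilde E},m,0})$: as recalled earlier in the paper, $\tilde h$ being complete, $\ker(\tilde{\Delta}_{\overline{\partial}_{\tilde E},m,0})=\ker(\overline{\partial}_{\tilde E,m,0})$ (the term $\overline{\partial}_{\tilde E,m,-1}\circ\overline{\partial}^*_{\tilde E,m,-1}$ vanishing for degree reasons), and $\ker(\overline{\partial}_{\tilde E,m,0})$ inside $L^2\Omega^{m,0}(\tilde M,\tilde E)$ is precisely the set of $L^2$ sections $s$ of $\tilde{K}_M\otimes\tilde E$ with $\overline{\partial}_{\tilde E}s=0$, all of which are smooth by elliptic regularity for the Hodge--Kodaira Laplacian. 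By point (3) of Proposition \ref{L2spec} one has $\mathcal H\neq\{0\}$, and $\Gamma=\pi_1(M)$ is infinite; the remark above then gives $\dim_\C\mathcal H=\infty$, as claimed.

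All the real content lies in Proposition \ref{L2spec}, which is already proved, so I do not expect a genuine obstacle here; the only points requiring a modicum of care are the identification of $L^2$-holomorphic sections with $\ker(\tilde{\Delta}_{\overline{\partial}_{\tilde E},m,0})$ and the correct use of the two classical features of the von Neumann dimension --- faithfulness and independence of the fundamental domain (see \cite{Atiyah,Luck}) --- entering the remark above.
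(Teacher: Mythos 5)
Your proof is correct, and the overall skeleton --- reading both assertions directly off Proposition \ref{L2spec} --- is exactly the paper's. The only deviation is in how you handle the two facts about von Neumann dimension that make the skeleton work: that $\dim_\Gamma V>0$ whenever $V\ne\{0\}$ (needed to pass from ``$0$ is an eigenvalue of $\tilde\Delta_{\overline\partial_{\tilde E},m,0}$'' to $h^{m,0}_{(2)}>0$), and that a nonzero closed $\Gamma$-invariant subspace is infinite-dimensional when $\Gamma$ is infinite. The paper treats the first as implicit and cites \cite[Lemma 15.10]{Roe98} for the second; you instead give a short self-contained proof of both, using nothing beyond the two defining invariances of $\dim_\Gamma$ (independence of orthonormal basis and of fundamental domain) plus the decomposition $\tilde M=\bigsqcup_\gamma\gamma U_0$. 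Your finite-dimensionality-excluding count, $|T|\cdot\dim_\Gamma V\le\int_{\tilde M}\phi=k$ for all finite $T\subset\Gamma$, is the standard argument and is carried out correctly (note that $\gamma U_0$ is indeed again a fundamental domain, which you use twice). The identification of $L^2$-holomorphic sections of $\tilde K_M\otimes\tilde E$ with $\ker(\tilde\Delta_{\overline\partial_{\tilde E},m,0})$ via completeness, $\ker\tilde\Delta_{m,0}=\ker\overline\partial_{m,0}$ (the term $\overline\partial_{m,-1}\circ\overline\partial^*_{m,-1}$ being absent in degree $0$), and elliptic regularity is also stated explicitly, where the paper only asserts it. So: same route, a bit more detail where the paper cites or leaves implicit; nothing missing and no gaps.
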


\begin{proof}
The first point is an immediate consequence of Prop. \ref{L2spec}. Concerning the second point we know that $$\tilde{\Delta}_{\overline{\partial}_{\tilde{E}},m,0}\colon L^2\Omega^{m,q}(\tilde{M},\tilde{E})\rightarrow L^2\Omega^{m,0}(\tilde{M},\tilde{E})$$ has non-trivial kernel. Since it is also invariant through the action of $\pi_1(M)$, we can conclude that it is infinite dimensional (in the usual sense), see \cite[Lemma 15.10]{Roe98}. Finally, given that the kernel of 
$$
\tilde{\Delta}_{\overline{\partial}_{\tilde{E}},m,0}\colon L^2\Omega^{m,q}(\tilde{M},\tilde{E})\rightarrow L^2\Omega^{m,0}(\tilde{M},\tilde{E})
$$ 
equals the kernel of 
$$
\overline{\partial}_{\tilde{E},m,0}\colon L^2\Omega^{m,0}(\tilde{M},\tilde{E})\rightarrow L^2\Omega^{m,1}(\tilde{M},\tilde{E}),
$$ 
we can conclude that the space of $L^2$-holomorphic sections of $\tilde{K}_M\otimes \tilde{E}\rightarrow \tilde{M}$ is infinite dimensional.
\end{proof}

\begin{cor}
\label{gsp}
Let $(M,h)$ be a K\"ahler topologically hyperbolic manifold. Let $(E,\tau)\rightarrow M$ be a rank $r\ge 2$ Hermitian holomorphic vector bundle, Griffiths positive over an open subset $A\subset M$ of full measure. Then  for each positive integer $\ell$ we have $$
H^{m,0}_{\overline{\partial}_E}\bigl(M,E^{\otimes\ell}\otimes (\det E)^{r\ell}\bigr)\neq \{0\}
$$ 
and 
$$\chi\bigl(M,K_M\otimes E^{\otimes\ell}\otimes (\det E)^{r\ell}\bigr)=h^{m,0}_{\overline{\partial}_{E}}\bigl(M, E^{\otimes\ell}\otimes (\det E)^{r\ell}\bigr)>0.
$$
\end{cor}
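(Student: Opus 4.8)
The plan is to reduce the statement to Theorem \ref{nsp} applied to a suitable Hermitian holomorphic vector bundle built out of $E$. The point is that Griffiths positivity is not stable under tensor products in general, but a classical trick of Demailly--Skoda shows that if $E$ is Griffiths positive then $E\otimes\det E$ is Nakano positive. First I would recall this: denoting by $\Theta(E)$ the Chern curvature of $(E,\tau)$, Griffiths positivity on the open set $A$ means $\Theta(E)$ is positive in the Griffiths sense over $A$, and by the Demailly--Skoda theorem (cf. \cite{Dem85} or the standard references on positivity of vector bundles) the bundle $E\otimes\det E$, endowed with the induced metric, has Nakano positive curvature over $A$. Since $A$ has full measure in $M$, the bundle $E\otimes\det E$ is then Nakano positive over an open subset of full measure.

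Next I would iterate and tensorize. For a fixed positive integer $\ell$, consider the bundle
$$
F_\ell:=E^{\otimes\ell}\otimes(\det E)^{r\ell}.
$$
The claim is that $F_\ell$ carries a Hermitian metric (the one induced by $\tau$) whose Chern curvature is Nakano positive over $A$. To see this, write $F_\ell=(E\otimes\det E)^{\otimes\ell}\otimes(\det E)^{(r-1)\ell}$, and use that: (i) $E\otimes\det E$ is Nakano positive over $A$ by the previous paragraph; (ii) a tensor power of a Nakano positive bundle with the induced metric is Nakano positive (Nakano positivity, unlike Griffiths positivity, is preserved under tensor products of bundles each of which is Nakano positive, by the obvious computation on the curvature of a tensor product with induced metric); (iii) $\det E=\det E$ is a Griffiths (hence, being a line bundle, Nakano) positive line bundle over $A$, so any positive tensor power $(\det E)^{(r-1)\ell}$ is again Nakano positive over $A$; and (iv) a tensor product of two Nakano positive bundles with induced metrics is Nakano positive over $A$ (the curvature of the tensor product is the sum of the curvatures tensored with identities, and the sum of two Nakano positive Hermitian forms on $\Lambda^{m,1}\otimes(\cdot)$ is Nakano positive). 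Combining these, $F_\ell$ with its induced metric is Nakano positive over the full-measure open set $A$.

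Having established this, I would simply apply Theorem \ref{nsp} with $E$ replaced by $F_\ell=E^{\otimes\ell}\otimes(\det E)^{r\ell}$. It yields at once
$$
H^{m,0}_{\overline{\partial}_{F_\ell}}(M,F_\ell)=H^0\bigl(M,K_M\otimes E^{\otimes\ell}\otimes(\det E)^{r\ell}\bigr)\neq\{0\}
$$
and
$$
\chi\bigl(M,K_M\otimes E^{\otimes\ell}\otimes(\det E)^{r\ell}\bigr)=h^{m,0}_{\overline{\partial}_{F_\ell}}\bigl(M,E^{\otimes\ell}\otimes(\det E)^{r\ell}\bigr)>0,
$$
which is exactly the assertion (here I allow myself to keep the $\overline{\partial}_E$ notation in place of $\overline{\partial}_{F_\ell}$, as the paper does, since the Dolbeault cohomology only depends on the holomorphic structure). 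The main obstacle is step (i)--(iv) above, i.e.\ making sure that the passage from Griffiths positivity of $E$ to Nakano positivity of $F_\ell$ is correct and that Nakano positivity genuinely propagates through the tensor operations involved — in particular one must be careful that it is $E\otimes\det E$ (and not $E$ alone) that is Nakano positive, which is precisely why the factor $(\det E)^{r\ell}$, rather than a smaller power, appears in the statement: writing $E^{\otimes\ell}\otimes(\det E)^{r\ell}=(E\otimes(\det E)^{\lceil r/?\rceil})^{\otimes\ell}\otimes\cdots$, one checks that $r\ell=\ell\cdot r$ is exactly enough to assign one copy of $\det E$ "inside" each of the $\ell$ Demailly--Skoda factors $E\otimes\det E$ and still be left with a nonnegative (here, positive) residual power $(\det E)^{(r-1)\ell}$ of $\det E$ to spare. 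Everything else is a formal application of Theorem \ref{nsp}.
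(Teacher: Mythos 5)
Your proof is correct and, in fact, more careful than the one in the paper. Both proofs rest on the same two pillars: the Demailly--Skoda theorem (Griffiths positivity of $E$ implies Nakano positivity of $E\otimes\det E$) followed by an application of Theorem~\ref{nsp}. The difference is in the bookkeeping of the powers of $\det E$. The paper asserts that $E^{\otimes\ell}$ is Griffiths positive (true) and then writes $\det(E^{\otimes\ell})\simeq(\det E)^{r\ell}$, apparently intending to apply Demailly--Skoda directly to $E^{\otimes\ell}$; but since $\operatorname{rk}(E^{\otimes\ell})=r^\ell$, one actually has $\det(E^{\otimes\ell})=(\det E)^{\ell r^{\ell-1}}$, which agrees with $(\det E)^{r\ell}$ only when $\ell=2$ (and is wrong already for $\ell=1$ when $r\geq2$). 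Your decomposition $E^{\otimes\ell}\otimes(\det E)^{r\ell}=(E\otimes\det E)^{\otimes\ell}\otimes(\det E)^{(r-1)\ell}$ circumvents this: since $E\otimes\det E$ is Nakano positive over $A$ and $(\det E)^{(r-1)\ell}$ is a positive line bundle over $A$ (using $r\geq 2$), and since Nakano positivity \emph{is} preserved under tensor products with Nakano semi-positive factors (by the usual identity $\Theta(F\otimes G)=\Theta(F)\otimes I_G+I_F\otimes\Theta(G)$, whose Nakano form is a sum of a positive definite and a positive semi-definite Hermitian form), the whole bundle $E^{\otimes\ell}\otimes(\det E)^{r\ell}$ is Nakano positive over $A$, and Theorem~\ref{nsp} applies verbatim. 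You should state (or cite) the stability of Nakano positivity under tensor products explicitly rather than calling it \lq\lq the obvious computation\rq\rq, since this is precisely the step where Griffiths and Nakano positivity behave differently; and your closing sentence containing \lq\lq$(E\otimes(\det E)^{\lceil r/?\rceil})$\rq\rq{} is garbled and should be deleted — the clean identity $F_\ell=(E\otimes\det E)^{\otimes\ell}\otimes(\det E)^{(r-1)\ell}$ stated earlier is all that is needed.
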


\begin{proof}
According to \cite{Dema-Skoda} the holomorphic vector bundle $E\otimes \det E$ endowed with the Hermitian metric induced by $h$ is Nakano positive over $A$. Moreover, the tensor product of Griffiths positive vector bundles is still Griffiths positive, and $\det E^{\otimes\ell}\simeq(\det E)^{r\ell}$. Now the conclusion follows immediately by Th. \ref{nsp}.
\end{proof}

\begin{rem}
More generally, one can look at irreducible $\operatorname{GL}(E)$-represen\-tations of $E^{\otimes\ell}$, in the spirit of \cite{Dem88a,Dem88p,Man97}. For a non-increasing weight $\underline a=(a_1,\dots,a_r)\in \mathbb N^r$, let $\Gamma^{\underline a} E$ be the irreducible tensor power representation of $\operatorname{GL}(E)$ of highest weight $\underline a$. We always have a canonical $\operatorname{GL}(E)$-isomorphism
$$
E^{\otimes\ell}\simeq\bigoplus_{\substack{a_1+\cdots+a_r=\ell \\ a_1\ge\cdots\ge a_r\ge 0}}\mu(\underline a,\ell)\,\Gamma^{\underline a} E,
$$
where $\mu(\underline a,\ell) > 0$ is the multiplicity of the isotypical factor $\Gamma^{\underline a} E$ in $E^{\otimes \ell}$. Since each $\Gamma^{\underline a} E$ is a quotient of $E^{\otimes\ell}$, Griffiths positivity propagates and we also have an analogous non-vanishing for $H^{m,0}_{\overline{\partial}_E}\bigl(M,\Gamma^{\underline a} E\otimes (\det E)^{\operatorname{rk}(\Gamma^{\underline a} E)}\bigr)$.  Morevoer, there is no need to twist by $\det E$ if $\operatorname{rk}\Gamma^{\underline a} E=1$, see the comment right below this remark. Thus, we have the more precise information that not only $H^0\bigl(M,K_M\otimes E^{\otimes\ell}\otimes (\det E)^{r\ell}\bigr)$ does not vanish but each of its direct summand 
$$
\begin{aligned}
& H^0\bigl(M,K_M\otimes \Gamma^{\underline a} E\otimes (\det E)^{\operatorname{rk}\Gamma^{\underline a} E)}\bigr)\ne\{0\},\quad \operatorname{rk}\Gamma^{\underline a} E\ge 2, \\
& H^0\bigl(M,K_M\otimes \Gamma^{\underline a} E\bigr)\ne\{0\},\quad \operatorname{rk}\Gamma^{\underline a} E= 1
\end{aligned}
$$ 
as well.
\end{rem}

We look now more closely at the case $r=1$, where there is no need of twisting by $\det E$ since Griffiths and Nakano positivity do coincide in this case.

\begin{cor}\label{line}
In the setting of Th. \ref{nsp}, assume additionally that $E$ is a line bundle. Then, for each positive integer $p$, we have 
$$
H^{m,0}_{\overline{\partial}_{E}}(M,E^p)\neq \{0\}.
$$ 
Moreover, $K_M\otimes E^p$ is a big line bundle for each positive integer $p$.
\end{cor}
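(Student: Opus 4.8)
The plan is to deduce the statement from Theorem~\ref{nsp} applied to the line bundle $E^p$, together with the birational and positivity results already established above.

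First I would endow $E^p$ with the Hermitian metric $\tau^{\otimes p}$ induced by $\tau$. Its Chern curvature is $i\Theta(E^p,\tau^{\otimes p})=p\,i\Theta(E,\tau)$, and since for a line bundle Nakano positivity simply means positivity of the curvature $(1,1)$-form, $(E^p,\tau^{\otimes p})$ is again a Hermitian holomorphic line bundle which is Nakano positive over the full-measure open set $A$. Applying Theorem~\ref{nsp} to $(E^p,\tau^{\otimes p})$ over $(M,h)$ then yields at once
$$
H^{m,0}_{\overline{\partial}_{E^p}}(M,E^p)=H^0(M,K_M\otimes E^p)\neq\{0\}
$$
and $h^{m,0}_{\overline{\partial}_{E^p}}(M,E^p)=\chi(M,K_M\otimes E^p)>0$, for every positive integer $p$; this settles the non-vanishing part.

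It remains to prove that $K_M\otimes E^p$ is big. By Remark~\ref{rem:proj}, in which for a line bundle one has $\det E=E$, the manifold $M$ is projective algebraic and $E$ is big and nef; hence so is $E^p$. Being projective and K\"ahler topologically hyperbolic, $M$ has $K_M$ pseudoeffective by Corollary~\ref{cor:pseff}. One then invokes the elementary fact that on a projective manifold the sum of a big class and a pseudoeffective one is big: by Kodaira's lemma we may write $c_1(E^p)=\alpha+[D]$ with $\alpha$ an ample $\Q$-class and $D$ an effective $\Q$-divisor, whence $c_1(K_M\otimes E^p)=\alpha+\bigl(c_1(K_M)+[D]\bigr)$ is the sum of an ample class and a pseudoeffective one, and therefore big. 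Hence $K_M\otimes E^p$ is a big line bundle for every positive integer $p$.

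No genuine difficulty arises here: the corollary is a formal consequence of the already established Theorem~\ref{nsp}, Remark~\ref{rem:proj} and Corollary~\ref{cor:pseff}. The only points deserving a word are the (immediate, for line bundles) stability of Nakano positivity under tensor powers, and the standard fact that ``big $+$ pseudoeffective $=$ big''.
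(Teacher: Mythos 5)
Your proof is correct and follows essentially the same path as the paper's: apply Theorem~\ref{nsp} to $(E^p,\tau^{\otimes p})$ for the non-vanishing, then combine Remark~\ref{rem:proj} (projectivity and bigness/nefness of $E$, hence of $E^p$) with Corollary~\ref{cor:pseff} ($K_M$ pseudoeffective) and the standard fact that big $+$ pseudoeffective is big. The only difference is that you spell out the last step via Kodaira's lemma, whereas the paper simply invokes it, so there is no genuine divergence in approach.
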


\begin{proof}
For each positive integer $p$, the line bundle $E^p$ is still positive over $A$. Therefore by Theorem \ref{nsp} we know that $H^{m,0}_{\overline{\partial}_{E}}(M,E^p)\neq \{0\}.$ 

Now, $E$ is nef since it is semi-positive and big by the solution of the Grauert--Riemenschneider conjecture by Siu and Demailly \cite{Siu84,Siu85,Dem85}, being positive on an open set of full measure. Since $M$ is projective (cf. Remark \ref{rem:proj}), we have that $K_M$ is pseudoeffective by Corollary \ref{cor:pseff} and therefore $K_M\otimes E^p$ is big as a tensor product of a pseudoeffective and a big line bundle.
\end{proof}

\begin{rem}\label{rem:aikconj}
The above corollary is a confirmation of the Kawamata--Ambro--Ionescu effective non vanishing conjecture in this (very) special case. 

Indeed, (one of the form of) this conjecture states that on a projective manifold $X$, for any given big and nef line bundle $L\to X$ such that $K_X+L$ is nef one should have $H^0(X,K_X+L)\ne 0$. We have here somehow a weaker positivity on the adjoint bundle $K_X+L$ which is big (and hence pseudoeffective, but nefness is stronger than pseudoeffectivity) and not necessarily nef, but stronger positivity assumption on $L$ as well as restrictions on the topology of $X$.  

Beside the case of curves and surfaces, the only case where the conjecture is fully known is for threefolds \cite{Hor12}. On the other hand, the conjecture is proved in all dimensions in \cite[Théorème 1]{Eys99} (see also \cite{BH10,Cha07,Tak99}) under the topological hypothesis of large fundamental group in the sense of Koll\'ar and just bigness for $L$. It is thus somehow difficult to compare our situation with that of \cite{Eys99}. 
\end{rem}

Here is a special case about the canonical bundle under the stronger hypothesis of weak Kähler hyperbolicity.

\begin{cor}
Let $M$ be a weakly K\"ahler hyperbolic manifold without rational curves. Then 
\begin{equation}
\label{wkhcan}
H^{m,0}_{\overline{\partial}_{E}}(M,K_{M}^p)\neq \{0\}
\end{equation}
for each positive integer $p$. 
In particular, \eqref{wkhcan} holds true whenever $M$ is K\"ahler hyperbolic.
\end{cor}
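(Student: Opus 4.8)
The plan is to deduce the statement from Corollary \ref{line} applied with $E=K_M$; for this it suffices to exhibit a smooth Hermitian metric on $K_M$ whose Chern curvature is semi-positive on all of $M$ and strictly positive on an open subset of full measure.

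First, recall from \cite{BDET} that a weakly K\"ahler hyperbolic manifold $M$ is of general type, so $K_M$ is a big line bundle; being compact K\"ahler and carrying a big line bundle, $M$ is projective by Moishezon's theorem. Moreover $M$ is K\"ahler topologically hyperbolic, as observed right after Definition \ref{def:thm}, so the ambient hypothesis of Theorem \ref{nsp} is in force.

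Next I would upgrade the positivity of $K_M$ through the Minimal Model Program. Since $M$ is projective of general type and has no rational curve, the Cone Theorem leaves no $K_M$-negative extremal ray in $\overline{NE}(M)$ (each such ray would be spanned by a rational curve), so $K_M$ is nef; as it is also big, the Base-Point-Free Theorem implies that $K_M$ is semi-ample. Hence for $p_0$ large enough $|K_M^{p_0}|$ is base-point-free and defines a morphism $\phi\colon M\to\mathbb{P}^N$ with $\phi^*\mathcal{O}(1)\simeq K_M^{p_0}$, which is birational onto its image because $K_M$ is big. Pulling back the Fubini--Study metric and taking a $p_0$-th root yields a smooth metric $\tau$ on $K_M$ with curvature $\tfrac{1}{p_0}\phi^*\omega_{\mathrm{FS}}\geq 0$ on $M$ and strictly positive exactly on the open set $A:=\{x\in M\colon d\phi_x\ \text{injective}\}$; since $\phi$ is generically an embedding, $M\setminus A$ is a proper analytic subset, hence of measure zero, so $(K_M,\tau)$ is Nakano (equivalently Griffiths, for a line bundle) positive over the full-measure open set $A$.

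We are then exactly in the setting of Corollary \ref{line} with $E=K_M$, which gives $H^{m,0}_{\overline{\partial}_{K_M}}(M,K_M^p)\neq\{0\}$ for every positive integer $p$. The final assertion follows because a K\"ahler hyperbolic manifold is in particular weakly K\"ahler hyperbolic (a K\"ahler class lying in $V^{2}_{\mathrm{hyp}}$ is a big and nef $(1,1)$-class) and has no rational curves: a rational curve $C\subset M$ lifts to a compact copy $\tilde C\cong\mathbb{P}^1$ in the universal cover, so $\int_C\omega=\int_{\tilde C}\tilde\omega=\int_{\tilde C}d\beta=0$ by Stokes, contradicting that $\omega$ is K\"ahler. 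The main obstacle is the passage from bigness to semi-ampleness of $K_M$ --- that is, the combined use of projectivity, the Cone Theorem (where the hypothesis of no rational curves enters) and the Base-Point-Free Theorem; once the metric $\tau$ is produced, the rest is a direct invocation of Corollary \ref{line}.
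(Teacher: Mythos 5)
Your argument is correct, and it follows the same overall strategy as the paper: use the positivity of $K_M$ known from \cite{BDET} together with the absence of rational curves to upgrade $K_M$ to a situation where Corollary \ref{line} applies with $E=K_M$. The one genuine difference is in how much positivity of $K_M$ you extract. The paper's proof goes all the way to ampleness of $K_M$, citing Debarre's exercise; the full solution of that exercise requires not only the Cone Theorem and Base-Point-Free Theorem (which get you nef, then semi-ample) but also the nontrivial fact that curves contracted by the resulting birational morphism are rational, which is an additional MMP input. You deliberately stop at semi-ampleness, and then exploit the flexibility of Theorem \ref{nsp}/Corollary \ref{line}, which only ask for positivity on an open set of \emph{full measure}: the pullback of the Fubini--Study form under the semi-ample (birational) map is semi-positive everywhere and positive precisely off a proper analytic subset. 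This is a marginally more economical route that avoids one non-elementary step, and it also illustrates exactly why the ``full-measure positivity'' formulation of Theorem \ref{nsp} is the natural hypothesis. The remaining ingredients you supply --- bigness of $K_M$ from \cite[Th.\ 4.1]{BDET}, projectivity via Moishezon, the Stokes-theorem argument showing K\"ahler hyperbolic manifolds have no rational curves (applied to the normalization $\mathbb{P}^1\to C$, which lifts to the universal cover by simple connectedness of $\mathbb{P}^1$) --- are all standard and correct.
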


\begin{proof} According to \cite[Th. 4.1]{BDET} we know that $K_M$ is big and thus, since we assumed additionally that $M$ has no rational curves, we can conclude that $K_M$ is ample by standard arguments in birational geometry, see \textsl{e.g.} \cite[Exercise 8 on p. 219]{Deb01}. 
If $M$ is K\"ahler hyperbolic then the absence of rational curves is automatic. 
The conclusion now follows in both cases by Cor. \ref{line}.
\end{proof}

\section{Curvature bounds on K\"ahler topologically hyperbolic manifolds}\label{sec:curv}

Th. \ref{nsp} can also be used to provide upper estimates of the negative part of the curvature of certain Hermitian homolorphic vector bundles. This is the contents of the next corollaries.

\begin{cor}
\label{up-es}
Let $(M,h)$ and $(E,\tau)\rightarrow M$ be as in Th. \ref{nsp}. 
Let us denote with $\mu_1:M\rightarrow \mathbb{R}$ the continuous function given by the lowest eigenvalues of $[i\Theta(E), \Lambda]$ acting on $(E,\tau)$ and let $c>1$ be the refined Kato constant of $K_M\otimes E$ w.r.t. $\overline{\partial}_{E,m,0}$. Then the following inequality holds true:
\begin{equation}
\label{min}
\min_M\left(\mathrm{scal}_h+2\mu_1\right)\leq -2c\tilde{\lambda}_{0,h},
\end{equation}
 and the equality occurs if and only if 
\begin{equation}
\label{min2}
\mathrm{scal}_h+2\mu_1\equiv -2c\tilde{\lambda}_{0,h}.
\end{equation}
\end{cor}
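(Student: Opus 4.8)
The plan is to run a Bochner--Kodaira--Nakano argument on the universal cover $\pi\colon\tilde{M}\to M$ against the bottom $\tilde{\lambda}_{0,h}$ of the Laplace--Beltrami spectrum. First I would invoke Corollary \ref{L2holsec} to get a nonzero $L^2$-holomorphic section $\tilde{u}$ of $\tilde{K}_M\otimes\tilde{E}\to\tilde{M}$, that is $0\neq\tilde{u}\in L^2\Omega^{m,0}(\tilde{M},\tilde{E})$ with $\overline{\partial}_{\tilde{E},m,0}\tilde{u}=0$; viewed as a section of $\tilde{K}_M\otimes\tilde{E}$ it is $L^2$-harmonic for $\Delta_{\overline{\partial}_{\tilde{K}_M\otimes\tilde{E}},0,0}$. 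Applying the Akizuki--Nakano identity \eqref{Ak--Na} to the bundle $K_M\otimes E$ on $(0,0)$-forms and lifting to $\tilde{M}$, one has $\Delta_{\overline{\partial}_{\tilde{K}_M\otimes\tilde{E}},0,0}=(\tilde{\nabla}')^{*}\tilde{\nabla}'+[i\Theta(\tilde{K}_M\otimes\tilde{E}),\tilde{\Lambda}]$, where $\tilde{\nabla}'$ is the lift of the $(1,0)$-part $\nabla'$ of the Chern connection of $K_M\otimes E$, so the first summand is the non-negative operator $\Delta'_{\tilde{K}_M\otimes\tilde{E},0,0}=(\tilde{\nabla}')^{*}\tilde{\nabla}'$ of \eqref{Ak--Na}, and the zeroth order term, acting on sections, is $-\mathrm{tr}_{\tilde{\omega}}i\Theta(\tilde{K}_M\otimes\tilde{E})$. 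By additivity of curvature under tensor products and since $K_M$ is a line bundle with $-\mathrm{tr}_{\omega}i\Theta(K_M)=\mathrm{tr}_{\omega}\mathrm{Ric}(h)=\tfrac12\mathrm{scal}_h$, this zeroth order term splits as $\tfrac12\mathrm{scal}_{\tilde{h}}\cdot\mathrm{Id}_{\tilde{E}}+[i\Theta(\tilde{E}),\tilde{\Lambda}]$, the last summand being precisely the endomorphism of $\tilde{E}$ obtained by letting the curvature term act on $(E,\tau)$; hence $\langle[i\Theta(\tilde{E}),\tilde{\Lambda}]\tilde{u},\tilde{u}\rangle\ge\tilde{\mu}_1|\tilde{u}|^2$ pointwise, $\tilde{\mu}_1$ being the pullback of $\mu_1$.

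Then I would integrate $\Delta_{\overline{\partial}_{\tilde{K}_M\otimes\tilde{E}},0,0}\tilde{u}=0$ over $\tilde{M}$ against $\tilde{u}$: since $(\tilde{M},\tilde{h})$ is complete, $\tilde{u}$ smooth and $L^2$, and the zeroth order term is bounded (being pulled back from the compact $M$), integration by parts is legitimate along compactly supported cutoffs, as in the proof of Proposition \ref{C-In}, and yields $\tilde{\nabla}'\tilde{u}\in L^2$ together with $\int_{\tilde{M}}|\tilde{\nabla}'\tilde{u}|^2\,\dvol_{\tilde{h}}=-\int_{\tilde{M}}\langle(\text{zeroth order term})\tilde{u},\tilde{u}\rangle\,\dvol_{\tilde{h}}$. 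Because $\tilde{u}$ is holomorphic, $\tilde{\nabla}'\tilde{u}$ equals the full covariant derivative of $\tilde{u}$, so the refined Kato inequality for $\overline{\partial}_{E,m,0}$ gives $|\tilde{\nabla}'\tilde{u}|^2\ge c\,\bigl|\,d|\tilde{u}|\,\bigr|^2$ pointwise, while the ordinary Kato inequality places $|\tilde{u}|$ in the form domain of $\tilde{\Delta}$, whence $\int_{\tilde{M}}\bigl|\,d|\tilde{u}|\,\bigr|^2\,\dvol_{\tilde{h}}\ge\tilde{\lambda}_{0,h}\int_{\tilde{M}}|\tilde{u}|^2\,\dvol_{\tilde{h}}$. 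Chaining these gives
$$
c\,\tilde{\lambda}_{0,h}\int_{\tilde{M}}|\tilde{u}|^2\,\dvol_{\tilde{h}}\le\int_{\tilde{M}}|\tilde{\nabla}'\tilde{u}|^2\,\dvol_{\tilde{h}}=-\int_{\tilde{M}}\Bigl(\tfrac12\mathrm{scal}_{\tilde{h}}|\tilde{u}|^2+\langle[i\Theta(\tilde{E}),\tilde{\Lambda}]\tilde{u},\tilde{u}\rangle\Bigr)\dvol_{\tilde{h}}\le-\int_{\tilde{M}}\Bigl(\tfrac12\mathrm{scal}_{\tilde{h}}+\tilde{\mu}_1\Bigr)|\tilde{u}|^2\,\dvol_{\tilde{h}},
$$
so that $\int_{\tilde{M}}\bigl(c\,\tilde{\lambda}_{0,h}+\tfrac12\mathrm{scal}_{\tilde{h}}+\tilde{\mu}_1\bigr)|\tilde{u}|^2\,\dvol_{\tilde{h}}\le 0$. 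Since $\tilde{u}$ is holomorphic and not identically zero its zero set is $\tilde{h}$-negligible, hence $\tilde{u}\neq 0$ almost everywhere; as the continuous function $c\,\tilde{\lambda}_{0,h}+\tfrac12\mathrm{scal}_h+\mu_1$ descends to the compact $M$, it must take a non-positive value, which after multiplying by $2$ is exactly \eqref{min}.

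For the equality statement, if equality holds in \eqref{min} then $\psi:=\tfrac12\mathrm{scal}_h+\mu_1+c\,\tilde{\lambda}_{0,h}\ge 0$ everywhere on $M$, so the non-negative function $\tilde{\psi}|\tilde{u}|^2$ has non-positive integral over $\tilde{M}$, forcing $\tilde{\psi}|\tilde{u}|^2\equiv 0$ a.e., hence $\psi\equiv 0$ on $M$ by continuity and $\tilde{u}\neq 0$ a.e.; this gives \eqref{min2}, and the reverse implication is immediate. I expect the main obstacle to be the careful Bochner--Kodaira--Nakano bookkeeping on the noncompact $\tilde{M}$ --- correctly isolating the zeroth order term of $\Delta_{\overline{\partial}_{\tilde{K}_M\otimes\tilde{E}},0,0}$ acting on sections and splitting it into the $\tfrac12\mathrm{scal}_{\tilde{h}}$ and the $[i\Theta(\tilde{E}),\tilde{\Lambda}]$ pieces, verifying $\tilde{\nabla}'\tilde{u}\in L^2$ and justifying the integration by parts --- together with invoking the refined Kato inequality with the prescribed constant $c$ for $\overline{\partial}_{E,m,0}$ and locating $|\tilde{u}|$ in the form domain of $\tilde{\Delta}$; once these are in place the density and positivity manipulations above are routine.
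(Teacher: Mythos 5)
Your proof is correct and follows essentially the same route as the paper: you apply the Akizuki--Nakano decomposition to $(m,0)$-forms viewed as sections of $K_M\otimes E$, integrate against a nonzero $L^2$-holomorphic section guaranteed by Theorem \ref{nsp} (equivalently Corollary \ref{L2holsec}), and chain the refined Kato inequality with the Cheeger bound from Proposition \ref{C-In} to obtain $\int_{\tilde M}\bigl(c\tilde\lambda_{0,h}+\tfrac12\mathrm{scal}_{\tilde h}+\tilde\mu_1\bigr)|\tilde u|^2\le 0$, exactly as the paper does. The only difference is organizational: the paper frames this as a proof by contradiction (assume $\tfrac12\mathrm{scal}_h+\mu_1+c\tilde\lambda_{0,h}\ge 0$ with strict inequality somewhere and force $\tilde u\equiv 0$, contradicting Theorem \ref{nsp}), while you derive the same integral inequality directly and read off both \eqref{min} and the equality case from it.
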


\begin{proof}
We argue by contradiction and thus we assume that $\mathrm{scal}_h+2\mu_1+2c\tilde{\lambda}_{0,h}\geq 0$ on $M$ and  $\mathrm{scal}_h(x)+2\mu_1(x)+2c\tilde{\lambda}_{0,h}>0$ for some $x\in M$. Let us consider the operator $$\tilde{\Delta}_{\overline{\partial}_E,m,0}\colon L^2\Omega^{m,0}(\tilde{M},\tilde{E})\rightarrow L^2\Omega^{m,0}(\tilde{M},\tilde{E}).$$ According to the Akizuki--Nakano identity with respect to the Chern connection of $K_{\tilde{M}}\otimes \tilde{E}$ we have 
$$
\begin{aligned}
\tilde{\Delta}_{\overline{\partial}_{\tilde{E}},m,0}&=\Delta'_{K_{\tilde{M}}\otimes\tilde{E}}+[i\Theta(K_{\tilde{M}}\otimes\tilde{E}),\tilde{\Lambda}]\\
&= \Delta'_{K_{\tilde{M}}\otimes\tilde{E}}+[i\Theta(K_{\tilde{M}}),\tilde{\Lambda}]+[i\Theta(\tilde{E}),\tilde{\Lambda}]\\
&=\Delta'_{K_{\tilde{M}}\otimes\tilde{E}}+\frac{\mathrm{scal}_{\tilde{h}}}{2}+[i\Theta(\tilde{E}),\tilde{\Lambda}],
\end{aligned}
$$
with $\Delta'_{K_{\tilde{M}}\otimes\tilde{E}}$ the second order elliptic operator acting on $C_c^{\infty}(\tilde{M},K_{\tilde{M}}\otimes \tilde{E})$ induced by the $(1,0)$ component of the Chern connection of $K_{\tilde{M}}\otimes \tilde{E}$. Let $\eta \in \ker(\tilde{\Delta}_{\overline{\partial}_{\tilde{E}},m,0})$. By denoting with $\tilde{\nabla}$ the Chern connection of $K_{\tilde{M}}\otimes \tilde{E}$ and keeping in mind the refined Kato inequality \cite{Ref-Kato} we have 
$$
\begin{aligned}
0&=\langle 2\tilde{\Delta}_{\overline{\partial}_{\tilde{E}},m,0}\eta,\eta\rangle_{L^2\Omega^{m,0}(\tilde{M},\tilde{E})}\\
&=\int_{\tilde{M}}\left(|\tilde{\nabla} \eta|^2_{\tilde{h}\otimes \tilde{\tau}}+\frac{\mathrm{scal}_{\tilde{h}}}{2}|\eta|^2_{\tilde{h}\otimes \tilde{\tau}}+\langle     [i\Theta(\tilde{E}),\tilde{\Lambda}]\eta,\eta\rangle_{\tilde{h}\otimes \tilde{\tau}}\right)\dvol_{\tilde{h}}\\
&\geq \int_{\tilde{M}}\left(c|d|\eta|_{\tilde{h}\otimes \tilde{\tau}}|^2_{\tilde{h}}+\frac{\mathrm{scal}_{\tilde{h}}}{2}|\eta|^2_{\tilde{h}\otimes \tilde{\tau}}+\mu_1|\eta|^2_{\tilde{h}\otimes \tilde{\tau}}\right)\dvol_{\tilde{h}}\\
(\mathrm{by\ Prop.}\ \ref{C-In}) &\geq \int_{\tilde{M}}\left(\left(c\tilde{\lambda}_{0,h}+\frac{\mathrm{scal}_{\tilde{h}}}{2}+\tilde{\mu}_1\right)|\eta|^2_{\tilde{h}\otimes \tilde{\tau}}\right)\dvol_{\tilde{h}}\\
& \geq 0,
\end{aligned}
$$
where $\tilde{\mu}_1=\mu_1\circ\pi$ denotes the smallest eigenvalues of $[i\Theta(\tilde{E}), \tilde{\Lambda}]$ acting on $(\tilde{E},\tilde{\tau})$.
Since the function $c\tilde{\lambda}_{0,h}+\frac{\mathrm{scal}_{\tilde{h}}}{2}+\tilde{\mu}_1$ is non-negative and positive somewhere on $\tilde{M}$ and $\eta$ is holomorphic we can conclude that $\eta$ vanishes identically. Thus $\ker(\tilde{\Delta}_{\overline{\partial}_{\tilde{E}},m,0})=\{0\}$. 

On the other hand by Th. \ref{nsp} we know that $\ker(\tilde{\Delta}_{\overline{\partial}_{\tilde{E}},m,0})\neq \{0\}$. We can therefore conclude that \eqref{min}-\eqref{min2} hold true as we have reached the desired contradiction.
\end{proof}

\begin{rem}
Note that \eqref{min}-\eqref{min2} are stronger than what one could expect by only knowing that $H^{m,0}_{\overline{\partial}_E,m,0}(M,E)\neq \{0\}$. Indeed this latter inequality implies only that $\min_M\left(\mathrm{scal}_h+2\mu_1\right)\leq 0$ whereas in our framework the presence of a homologically non-singular hyperbolic cohomology class of degree two allows us to estimates $\min_M\left(\mathrm{scal}_h+2\mu_1\right)$ with the negative constant $-2c\tilde{\lambda}_{0,h}$.
\end{rem}

We continue with the next

\begin{cor}
In the setting of Th. \ref{nsp}, for each $p\in \{1,\dots,m-1\}$ there exists at least one $q\in\{0,\dots,m\}$ such that, for any open subset $W\subset M$ with $\vol_h(W)=\vol_h(M)$, the curvature term $$[i\Theta(E),\Lambda]$$ is not positive definite on $\Lambda^{p,q}(W)\otimes E|_W$.
\end{cor}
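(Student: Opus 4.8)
The plan is to argue by contradiction, by playing off against each other the two opposite spectral facts about the Hodge--Kodaira Laplacian which, for the top degree $p=m$, already enter the proof of Proposition~\ref{L2spec}. Fix $p\in\{1,\dots,m-1\}$ and suppose the assertion fails: then for \emph{every} $q\in\{0,\dots,m\}$ there is an open subset $W_q\subset M$ with $\vol_h(W_q)=\vol_h(M)$ such that $[i\Theta(E),\Lambda]$ is positive definite on $\Lambda^{p,q}(W_q)\otimes E|_{W_q}$. As there are only finitely many indices $q$, the open set $W:=\bigcap_{q=0}^{m}W_q$ still satisfies $\vol_h(W)=\vol_h(M)$, and on it $[i\Theta(E),\Lambda]$ is positive definite on $\Lambda^{p,q}(W)\otimes E|_W$ simultaneously for every $q=0,\dots,m$.

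First I would repeat, for each such $q$, the argument of the first part of the proof of Proposition~\ref{L2spec}. The Akizuki--Nakano identity \eqref{Ak--Na} gives on the universal cover $\tilde{\Delta}_{\overline{\partial}_{\tilde{E}},p,q}=\Delta'_{\tilde{E},p,q}+[i\Theta(\tilde{E}),\tilde{\Lambda}]$ with $\Delta'_{\tilde{E},p,q}$ non-negative, so $\tilde{\Delta}_{\overline{\partial}_{\tilde{E}},p,q}$ is the lift of $\Delta_{\overline{\partial}_{E},p,q}=D+L$ with $D:=\Delta'_{E,p,q}$ formally self-adjoint, non-negative, elliptic of order two and $L:=[i\Theta(E),\Lambda]\in C^{\infty}(M,\mathrm{End}(\Lambda^{p,q}(M)\otimes E))$ of order zero. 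Since $\min(\sigma(\tilde{D}))\ge 0$, we have $L+\min(\sigma(\tilde{D}))\ge L>0$ over $W$, an open subset of $M$ of full measure; hence Theorem~\ref{pspec} yields $0<\min(\sigma(\tilde{\Delta}_{\overline{\partial}_{\tilde{E}},p,q}))$. In particular $0\notin\sigma(\tilde{\Delta}_{\overline{\partial}_{\tilde{E}},p,q})$ for \emph{every} $q=0,\dots,m$.

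Next I would run the Gromov--Vafa--Witten trick of Proposition~\ref{L2spec}(3) \emph{verbatim}, but with the antiholomorphic truncation degree $m$ replaced throughout by $p$; that is, applied to the Dirac-type operator $\overline{\eth}_{\tilde{E},p}$ on $L^2\Omega^{p,\bullet}(\tilde{M},\tilde{E})$, twisted by the flat family $(F,\nabla^s)$ built from a bounded primitive of (a representative of) a hyperbolic, homologically non singular class $[\mu]\in H^2(M,\mathbb R)$. The local index theorem for twisted spin-c Dirac operators then gives $L^2_{\Gamma_s}-\text{ind}(\overline{D}_{\tilde{E},p}^{s,+})=\int_M\text{Todd}(M)\wedge\text{ch}(\Lambda^{p,0}(M))\wedge\text{ch}(E)\wedge\text{ch}(F)$ with $\text{ch}(F)=\exp(-s\mu/2\pi)$. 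The coefficient of $s^m$ on the right-hand side is produced only by the top-degree term $\frac{1}{m!}(-1/2\pi)^m\mu^m$ of $\text{ch}(F)$ paired with the degree-zero parts $1$, $\binom{m}{p}=\operatorname{rk}\Lambda^{p,0}(M)$ and $\operatorname{rk}(E)$ of the remaining factors, hence it equals $\frac{1}{m!}(-1/2\pi)^m\binom{m}{p}\operatorname{rk}(E)\int_M\mu^m$, which is nonzero since $\binom{m}{p}\ge 1$, $\operatorname{rk}(E)\ge 1$ and $\int_M\mu^m\neq 0$ by K\"ahler topological hyperbolicity. Thus $s\mapsto L^2_{\Gamma_s}-\text{ind}(\overline{D}_{\tilde{E},p}^{s,+})$ is a non constant polynomial, with isolated zeros, so $\ker(\overline{D}_{\tilde{E},p}^{s})\neq\{0\}$ for all sufficiently small $s>0$ and therefore, by \cite[Prop. 7.1.2]{Eys97}, $0\in\sigma(\overline{\eth}_{\tilde{E},p})$. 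Since $\overline{\eth}_{\tilde{E},p}^2=\bigoplus_{q=0}^m\tilde{\Delta}_{\overline{\partial}_{\tilde{E}},p,q}$, we get $0\in\sigma(\tilde{\Delta}_{\overline{\partial}_{\tilde{E}},p,q})$ for some $q$, contradicting the previous paragraph. This contradiction proves the statement.

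The main (and essentially the only nontrivial) point to verify is that the whole Gromov--Vafa--Witten apparatus of Proposition~\ref{L2spec}(3) --- the construction of the central extension $1\to U(1)\to\Gamma_s\to\pi_1(M)\to 1$, the $\Gamma_s$-equivariance, the heat-kernel expression for the $L^2_{\Gamma_s}$-index, and the application of the spin-c local index theorem --- nowhere uses the specific value $p=m$, so that it goes through with $\Lambda^{p,0}(M)$ in place of $\Lambda^{m,0}(M)=K_M$, and that the resulting index polynomial retains a nonzero leading coefficient. The only delicate arithmetic is then the nonvanishing of $\binom{m}{p}\operatorname{rk}(E)\int_M\mu^m$, which in fact holds for every $p\in\{0,\dots,m\}$, the excluded endpoint $p=m$ being anyway trivial since $[i\Theta(E),\Lambda]$ vanishes identically on $\Lambda^{m,0}(M)\otimes E$.
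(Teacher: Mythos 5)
Your proposal is correct and follows essentially the same route as the paper: argue by contradiction, intersect the sets $W_q$, use Akizuki--Nakano together with Theorem~\ref{pspec} to kill $0$ from the spectrum of every $\tilde{\Delta}_{\overline{\partial}_{\tilde{E}},p,q}$ (hence of $\overline{\eth}_{\tilde{E},p}$), and then rerun the Gromov--Vafa--Witten index argument of Proposition~\ref{L2spec}(3) at truncation degree $p$ to force $0$ back into $\sigma(\overline{\eth}_{\tilde{E},p})$. The only place you go beyond the paper's wording is in spelling out why the index polynomial $s\mapsto L^2_{\Gamma_s}\text{-}\mathrm{ind}(\overline{D}_{\tilde{E},p}^{s,+})$ has nonzero leading coefficient, namely $\frac{1}{m!}(-1/2\pi)^m\binom{m}{p}\operatorname{rk}(E)\int_M\mu^m\neq 0$; the paper just invokes the analogous computation in \cite{Gro91} for $p=m$. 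That detail is worth making explicit, since it is exactly the point where one must check that nothing in the argument is special to $p=m$.
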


\begin{proof}
The proof is carried out by contradiction. Assume that the statement does not hold true. Then there exists an integer $p\in \{1,\dots,m-1\}$ such that for each $q\in \{0,\dots,m\}$ there exists an open subset $W_q\subset M$ with $\vol_h(W_q)=\vol_h(M)$ such that the curvature term $[i\Theta(E),\Lambda]$ is positive definite on $\Lambda^{p,q}(W_q)\otimes E|_{W_q}$. Let $W=W_1\cap \dots\cap W_m$. Then $W$ is open, still verifies $\vol_h(W)=\vol_h(M)$ and now $[i\Theta(E),\Lambda]$ is positive definite on $\Lambda^{p,q}(W_q)\otimes E|_{W_q}$ for each $q=0,\dots,m$. According to the Akizuki--Nakano identity \eqref{Ak--Na} and Th. \ref{pspec} we can conclude that $0$ does not belong to the spectrum of $$\tilde{\Delta}_{\overline{\partial}_{\tilde{E}},p,q}\colon L^2\Omega^{p,q}(\tilde{M},\tilde{E})\rightarrow L^2\Omega^{p,q}(\tilde{M},\tilde{E})$$ for each $q=0,\dots,m$. Let us consider the operator $$\overline{\eth}_{E,p}\colon L^2\Omega^{p,\bullet}(\tilde{M},\tilde{E})\rightarrow L^2\Omega^{p,\bullet}(\tilde{M},\tilde{E})$$ that is  the unique closed (and hence self-adjoint) extension of 
$$
\overline{\partial}_{\tilde{E},p}+\overline{\partial}_{\tilde{E},p}^t\colon\Omega_c^{p,\bullet}(\tilde{M}, \tilde{E})\rightarrow \Omega^{p,\bullet}_c(\tilde{M},\tilde{E}),
$$ 
with $\Omega_c^{p,\bullet}(\tilde{M},\tilde{E}):=\bigoplus_{q=0}^m\Omega^{p,q}_c(\tilde{M},\tilde{E})$ and  $\overline{\partial}_{\tilde{E},p}+\overline{\partial}^t_{\tilde{E},p}$ the corresponding Dirac operator. Since 
\begin{multline}
\overline{\eth}_{\tilde{E},p}^2\colon L^2\Omega^{p,\bullet}(\tilde{M},\tilde{E})\rightarrow L^2\Omega^{p,\bullet}(\tilde{M},\tilde{E})\\=\bigoplus_{q=0}^m\Delta_{\overline{\partial}_{\tilde{E}},p,q}\colon L^2\Omega^{p,\bullet}(\tilde{M},\tilde{E})\rightarrow L^2\Omega^{p,\bullet}(\tilde{M},\tilde{E}),
\end{multline}
 we can conclude that  $0\notin \sigma(\overline{\eth}_{\tilde{E},p})$. On the other hand, arguing again as in the proof of Prop. \ref{L2spec}, let us consider the operator  
 $$
 \sqrt 2(\overline{\partial}_{\tilde{E},p}+\overline{\partial}_{\tilde{E},p}^t)\otimes \nabla^s\colon C_c^{\infty}(\tilde{M},\Lambda^{p,\bullet}(\tilde{M})\otimes \tilde{E}\otimes \mathbb{C})\rightarrow C_c^{\infty}(\tilde{M},\Lambda^{p,\bullet}(\tilde{M})\otimes\tilde{E}\otimes \mathbb{C})
 $$ 
 which is  obtained by twisting the Dirac operator $\sqrt 2(\overline{\partial}_{\tilde{E},p}+\overline{\partial}_{\tilde{E},p}^t)$ with the connection $\nabla^s$. Let us denote with 
\begin{equation}
\label{L2twistx}
\overline{D}_{\tilde{E},p}^s\colon L^2(\tilde{M},\Lambda^{p,\bullet}(\tilde{M})\otimes \tilde{E}\otimes\mathbb{C})\rightarrow L^2(\tilde{M},\Lambda^{p,\bullet}(\tilde{M})\otimes \tilde{E}\otimes \mathbb{C})
\end{equation}
 the $L^2$-closure of $\sqrt 2(\overline{\partial}_{\tilde{E},p}+\overline{\partial}_{\tilde{E},p}^t)\otimes \nabla^s$. Then the same argument recalled in Prop. \ref{L2spec} shows that $0$ lies in the spectrum of $\overline{D}_{\tilde{E},p}^s$ and consequently, by \cite[Prop. 7.1.2]{Eys97}, we can conclude that $0\in \sigma(\overline{\eth}_{\tilde{E},p})$. The proof  is thus complete as we reached the desired contradiction. 
\end{proof}

The same ideas that we have used so far can also be applied to estimate from above the negative part of the Ricci curvature on a K\"ahler topologically hyperbolic manifold. This is the content of the next results.\\

 Let $\mathrm{Ric}_h$ be the Ricci tensor of the metric $h$ and  let $\mathrm{ric}_h$ be the unique endomorphism of $TM$ such that $\mathrm{Ric}_h(\cdot,\cdot)=h(\mathrm{ric}_h \cdot, \cdot)$. Since $\mathrm{ric}_h$ commutes with $J$, the complex structure of $M$, each eigenspace of $\mathrm{ric}_h$ is preserved by $J$ and thus has even dimension. There exist therefore $m$ real-valued continuous functions $r_j:M\rightarrow \mathbb{R}$, $j=1,\dots,m$, such that 
$$
r_1\leq r_2\leq\dots\leq r_{m}
$$
 and for each $p\in M$ 
\begin{equation}
\label{eigenricci}
\{r_1(p),r_1(p),r_2(p),r_2(p),\dots,r_{m}(p),r_{m}(p)\}
\end{equation}
 is the set of eigenvalues of $\mathrm{ric}_{h,p}:T_pM\rightarrow T_pM$. Since $\mathrm{ric}_h$ commutes with $J$ his $\mathbb{C}$-linear extension to $TM\otimes \mathbb{C}$ preserves both $T^{1,0}M$ and $T^{0,1}M$. Let us denote with $\mathrm{ric}^{1,0}$ the restriction of $\mathrm{ric}_h$ to $T^{1,0}M$. Then for each $p\in M$ the $m$ eigenvalues of 
\begin{equation}
\label{ric10}
\mathrm{ric}_p^{1,0}\colon T_p^{1,0}M\rightarrow T_p^{1,0}M
\end{equation}
 are given by $$\{r_1(p),r_2(p),\dots,r_{m-1}(p),r_{m}(p)\}.$$ Finally, with a little abuse of notation, we still denote with $\mathrm{ric}^{1,0}$ the endomorphism of $\Lambda^{1,0}(M)$ induced by $\mathrm{ric}^{1,0}\colon T^{1,0}M\rightarrow T^{1,0}M$. We have all the ingredients for the next

\begin{prop}
\label{curvature1}
Let $M$ be a K\"ahler topologically hyperbolic manifold. Then, for any arbitrarily fixed K\"ahler metric $h$, there exists $p\in \{1,\dots,m\}$ such that the inequality $$r_1+r_2+\cdots+r_p\leq -\tilde{\lambda}_{0,h}$$ holds true over a closed subset of $M$ of positive measure. 
\end{prop}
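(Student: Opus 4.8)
The plan is to argue by contradiction, running on the \emph{untwisted} Dolbeault complex the Gromov--Vafa--Witten machinery used in Proposition~\ref{L2spec} together with a Bochner--Kodaira identity for holomorphic forms and a compactness argument on $M$. Suppose the statement fails. Since each $r_1+\cdots+r_p$ is continuous on the compact manifold $M$, the locus $\{r_1+\cdots+r_p\le-\tilde\lambda_{0,h}\}$ is closed, and the negation says exactly that it has zero measure for every $p\in\{1,\dots,m\}$, that is
\[
r_1+\cdots+r_p>-\tilde\lambda_{0,h}\qquad\text{almost everywhere on }M,\qquad p=1,\dots,m.
\]
The Gromov--Vafa--Witten argument recalled in the proof of Proposition~\ref{L2spec}(3) uses only the $\tilde d$-boundedness of a homologically non singular class $[\mu]$ and the fact that $\int_M\mu^m\ne0$; it therefore applies verbatim with trivial coefficient bundle and gives $0\in\sigma(\overline{\eth}_m)$, where $\overline{\eth}_m$ is the $L^2$-closure of $\sqrt2(\overline\partial_m+\overline\partial_m^t)$ on $\tilde M$. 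Since $\overline{\eth}_m^2=\bigoplus_{q=0}^m\tilde\Delta_{\overline\partial,m,q}$, there is $q_0$ with $0\in\sigma(\tilde\Delta_{\overline\partial,m,q_0})$; on the complete K\"ahler manifold $\tilde M$ the Hodge $*$-operator is an $L^2$-isometry carrying $(m,q_0)$-forms onto $(m-q_0,0)$-forms and commuting with $\Delta_d=2\Delta_{\overline\partial}$, so $0\in\sigma(\tilde\Delta_{\overline\partial,p_0,0})$ with $p_0:=m-q_0$. Moreover $p_0\ge1$: otherwise $0\in\sigma(\tilde\Delta_{\overline\partial,0,0})$, i.e.\ $0\in\sigma(\tilde\Delta)$ since $\tilde\Delta_{\overline\partial,0,0}=\tfrac12\tilde\Delta$ on functions, which would force $\tilde\lambda_{0,h}=0$, impossible by Proposition~\ref{C-In} (applicable because $V^{2m}_{\mathrm{hyp}}(M)=H^{2m}(M,\mathbb R)$).

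Now regard $(p_0,0)$-forms on $\tilde M$ as sections of the holomorphic Hermitian bundle $\Omega^{p_0}_{\tilde M}=\Lambda^{p_0}\Omega^1_{\tilde M}$ and let $\tilde\nabla$ be its Chern connection. Exactly as in the proof of Corollary~\ref{up-es} (Akizuki--Nakano identity on $(0,0)$-forms with values in $\Omega^{p_0}_{\tilde M}$), one obtains, for $\eta\in\Omega^{p_0,0}_c(\tilde M)$,
\[
\langle 2\,\tilde\Delta_{\overline\partial,p_0,0}\eta,\eta\rangle=\|\tilde\nabla\eta\|^2+\int_{\tilde M}\bigl\langle[i\Theta(\Omega^{p_0}_{\tilde M}),\tilde\Lambda]\eta,\eta\bigr\rangle\,\dvol_{\tilde h}.
\]
A pointwise linear-algebra computation identifies the eigenvalues of the Hermitian endomorphism $[i\Theta(\Omega^{p_0}_{\tilde M}),\tilde\Lambda]$ with the sums $r_{i_1}+\cdots+r_{i_{p_0}}$ over strictly increasing multi-indices $i_1<\cdots<i_{p_0}$ (for $p_0=m$ this reduces to $[i\Theta(K_{\tilde M}),\tilde\Lambda]=\mathrm{scal}_{\tilde h}/2$), so its lowest eigenvalue at each point is $(r_1+\cdots+r_{p_0})\circ\pi$; hence $[i\Theta(\Omega^{p_0}_{\tilde M}),\tilde\Lambda]\ge(r_1+\cdots+r_{p_0})\circ\pi$ as Hermitian forms. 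Since also $\tilde\nabla^t\tilde\nabla\ge\tilde\lambda_{0,h}$ by Corollary~\ref{C-In2}, we get
\[
\langle 2\,\tilde\Delta_{\overline\partial,p_0,0}\eta,\eta\rangle\ \ge\ \|\tilde\nabla\eta\|^2+\int_{\tilde M}(r_1+\cdots+r_{p_0})\!\circ\!\pi\,|\eta|^2\,\dvol_{\tilde h}.
\]

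Finally, choose $v_n\in\Omega^{p_0,0}_c(\tilde M)$ with $\|v_n\|_{L^2}=1$ and $\langle\tilde\Delta_{\overline\partial,p_0,0}v_n,v_n\rangle\to0$ (possible since $0\in\sigma$ and $\Omega^{p_0,0}_c(\tilde M)$ is a core). Combining the last inequality with $\|\tilde\nabla v_n\|^2\ge\tilde\lambda_{0,h}$ yields
\[
\int_{\tilde M}\bigl((r_1+\cdots+r_{p_0})\!\circ\!\pi+\tilde\lambda_{0,h}\bigr)|v_n|^2\,\dvol_{\tilde h}\le o(1).
\]
Let $w_n$ be the $\Gamma$-invariant function $\tilde x\mapsto\sum_{\gamma\in\Gamma}|v_n(\gamma\tilde x)|^2$, viewed as a density on $M$. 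Then $\int_M w_n\,\dvol_h=1$, $\int_M(r_1+\cdots+r_{p_0})\,w_n\,\dvol_h=\int_{\tilde M}(r_1+\cdots+r_{p_0})\!\circ\!\pi\,|v_n|^2\,\dvol_{\tilde h}$, and a Kato inequality together with the Cauchy--Schwarz inequality gives $\int_M|d\sqrt{w_n}|^2_h\,\dvol_h\le\|\tilde\nabla v_n\|^2$; moreover $\|\tilde\nabla v_n\|^2$ is bounded, since $\int_M(r_1+\cdots+r_{p_0})\,w_n\ge\min_M(r_1+\cdots+r_{p_0})>-\infty$. Thus $\{\sqrt{w_n}\}$ is bounded in $W^{1,2}(M)$, and by Rellich--Kondrachov a subsequence converges in $L^2(M)$ to some $g$ with $\int_M g^2\,\dvol_h=1$, whence $w_n\to g^2$ in $L^1(M)$. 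Passing to the limit in the displayed inequality gives $\int_M\bigl((r_1+\cdots+r_{p_0})+\tilde\lambda_{0,h}\bigr)g^2\,\dvol_h\le0$, while by the first step the integrand is non-negative almost everywhere; this forces $g\equiv0$, contradicting $\int_M g^2=1$. This contradiction proves the proposition.

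The main obstacle is that, working with trivial coefficients, no positivity is available, so the Gromov--Vafa--Witten trick only places $0$ in the spectrum of $\tilde\Delta_{\overline\partial,p_0,0}$ and does \emph{not} produce an honest $L^2$-harmonic holomorphic $p_0$-form (the closed-range argument of Proposition~\ref{L2spec}(2) is unavailable). Consequently the merely almost-everywhere lower bound coming from the negated hypothesis does not rule out a Weyl sequence whose mass escapes to infinity in $\tilde M$; the point that makes the argument go through is the descent to the compact quotient, where the shadow density $w_n$ has uniformly bounded Dirichlet energy by the Kato inequality and Rellich--Kondrachov forces a non-trivial limit.
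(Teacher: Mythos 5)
Your argument is correct, but it is organized quite differently from the paper's proof, and in a genuinely interesting way. The paper runs Theorem~\ref{pspec} (its general spectral-gap lemma, whose proof rests on Proposition~\ref{Normcontrol} and Lemma~\ref{Mtool}) on each $\tilde\Delta_{\overline\partial,p,0}$ for $p=1,\dots,m$ via the Weitzenb\"ock decomposition $\Delta_{\overline\partial,p,0}=\tfrac12(\nabla^t\nabla+\mathrm{ric}^{p,0})$, adds the $p=0$ case via Proposition~\ref{C-In}, then passes by Hodge duality to $0\notin\sigma(\tilde\Delta_{\overline\partial,m,q})$ for all $q$, hence $0\notin\sigma(\overline\eth_m)$, which the Gromov--Vafa--Witten trick then contradicts. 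You run the contradiction in the opposite direction: GVW first produces $0\in\sigma(\tilde\Delta_{\overline\partial,p_0,0})$ for a single $p_0\ge1$, and you then refute it \emph{without} invoking Theorem~\ref{pspec}, by normalizing a Weyl sequence $v_n$, bounding its Dirichlet energy via the Weitzenb\"ock identity, pushing down to the compact quotient through the shadow density $w_n(\tilde x)=\sum_\gamma|v_n(\gamma\tilde x)|^2$, and invoking Rellich--Kondrachov. In effect you give a self-contained reproof of the fragment of Theorem~\ref{pspec} that is actually needed here; your version is more elementary (Kato/Cauchy--Schwarz plus compact Sobolev embedding rather than the equidistribution-type argument with the spectral resolution and elliptic estimates), but it is tied to the specific second-order structure $\tilde D=\nabla^t\nabla$ and the fact that $\|\tilde\nabla v_n\|^2$ controls $\|d\sqrt{w_n}\|^2_{L^2(M)}$, so unlike Theorem~\ref{pspec} it would not extend to elliptic operators of arbitrary order. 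You also correctly note that, with trivial coefficients, the closed-range argument of Proposition~\ref{L2spec}(2) is unavailable, so GVW only gives $0\in\sigma$ and not an honest harmonic form; your descent argument is precisely what makes the proof go through despite this, whereas the paper sidesteps the issue entirely by proving the spectral gap directly. Both routes are valid; the paper's is shorter given the tools already in place, yours is a nice alternative that isolates a more elementary mechanism for ruling out mass escape of the Weyl sequence.
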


\begin{proof}
We proceed by contradiction and so we assume that the above conclusion  does not hold. Let us denote with $\mathrm{ric}^{p,0}\colon\Lambda^{p,0}(M)\rightarrow \Lambda^{p,0}(M)$ the vector bundle endomorphism obtained by $\mathrm{ric}^{1,0}\colon\Lambda^{1,0}(M)\rightarrow \Lambda^{1,0}(M)$ extended as a derivation. Then for each $p\in \{1,\dots.,m\}$ there exists an open set $U_p\subset M$ such that $\vol_h(U_p)=\vol_h(M)$ and $\mathrm{ric}^{p,0}>-\tilde{\lambda}_{0,h}$ on $U_q$. Let $U:=U_1\cap\cdots\cap U_m$. Obviously we have $\vol_h(U)=\vol_h(M)$ and $\mathrm{ric}^{p,0}>-\tilde{\lambda}_{0,h}$ on $U$ for each $p=1,\dots,m$. Since the Weitzenb\"ock formula reads as 
$$
\Delta_{\overline{\partial},p,0}=\frac{1}{2}\left(\nabla^t\circ \nabla +\mathrm{ric}^{p,0}\right),
$$ 
we can apply Th. \ref{pspec} and Cor. \ref{C-In2} to conclude that $0$ does not lie in $\sigma(\tilde{\Delta}_{\overline{\partial},p,0})$, the spectrum of $$\tilde{\Delta}_{\overline{\partial},p,0}\colon L^2\Omega^{p,0}(\tilde{M},\tilde{h})\rightarrow L^2\Omega^{p,0}(\tilde{M},\tilde{h})$$ for each $p=1,\dots,m$. Furthermore, thanks to Prop. \ref{C-In} we also know that $0$ does not lie in $\sigma(\tilde{\Delta}_{\overline{\partial},0,0})$, the spectrum of $$\tilde{\Delta}_{\overline{\partial},0,0}\colon L^2(\tilde{M},\tilde{h})\rightarrow L^2(\tilde{M},\tilde{h})$$ given that it does not lie in the spectrum of $$\tilde{\Delta}\colon L^2(\tilde{M},\tilde{h})\rightarrow L^2(\tilde{M},\tilde{h})$$ and $(M,h)$ is K\"ahler.  Now, using the conjugation and the Hodge star operator, we can conclude that $0$ does not lie in the spectrum of 
$$
\tilde{\Delta}_{\overline{\partial},m,q}\colon L^2\Omega^{m,q}(\tilde{M},\tilde{h})\rightarrow L^2\Omega^{m,q}(\tilde{M},\tilde{h})
$$ 
for each $q=0,\dots,m$. Consequently $0$ does not lie in the spectrum of $$\overline{\eth}_{E,m}\colon L^2\Omega^{m,\bullet}(\tilde{M},\tilde{h})\rightarrow L^2\Omega^{m,\bullet}(\tilde{M},\tilde{h})$$ and thus, by \cite[Prop.7.1.2]{Eys97}, we conclude that there exists a constant  $\epsilon>0$ such that $0$ does not lie in the spectrum of the operator 
\begin{equation}
\label{zaq}
\overline{D}_{m}^s\colon L^2(\tilde{M},\Lambda^{m,\bullet}(\tilde{M})\otimes \mathbb{C})\rightarrow L^2(\tilde{M},\Lambda^{m,\bullet}(\tilde{M})\otimes \mathbb{C})
\end{equation}
 for each $s\in (0,\epsilon)$, see \eqref{L2twist} for a precise definition of the above operator. On the other hand, arguing as in the proof of Th. \ref{nsp}, we can conclude that there exists a constant $\delta>0$ such that $0$ is in the spectrum of \eqref{zaq} for each $s\in (0,\delta)$. We have thus reached a contradiction, as desired. 
\end{proof}

In the case $M$ is a  weakly K\"ahler hyperbolic manifold we get a stronger estimates for its scalar curvature. More precisely:

\begin{thm} 
\label{scalcurv}
Let $M$ be a weakly K\"ahler hyperbolic manifold. Then, for any arbitrarily fixed K\"ahler metric $h$, we have $$\min_{M}(\mathrm{scal}_h)\leq -4\tilde{\lambda}_{0,h}$$ and the equality holds if and only if  $$\mathrm{scal}_h\equiv -4\tilde{\lambda}_{0,h}.$$
\end{thm}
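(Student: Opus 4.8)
The plan is to run the argument of Corollary \ref{up-es} with the \emph{trivial} line bundle in place of $E$: then the curvature quantity $\mu_1$ vanishes identically, while the refined Kato constant relevant to $\overline{\partial}$ on $(m,0)$-forms is optimal and equal to $2$, so that the estimate $\min_M(\mathrm{scal}_h+2\mu_1)\le-2c\,\tilde\lambda_{0,h}$ becomes exactly $\min_M\mathrm{scal}_h\le-4\tilde\lambda_{0,h}$. The only ingredient of Corollary \ref{up-es} that does not specialize verbatim is the nonvanishing $\ker(\tilde\Delta_{\overline\partial,m,0})\ne\{0\}$ on the universal cover $\tilde M$: there it came from Theorem \ref{nsp}, and supplying it here is precisely where the stronger hypothesis of weak K\"ahler hyperbolicity is used.

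First I would produce a nonzero $L^2$ holomorphic $m$-form on $\tilde M$, i.e.\ show $h^{m,0}_{(2)}(M)>0$. If $[\mu]\in V^2_{\mathrm{hyp}}(M)\cap H^{1,1}(M,\mathbb R)$ is big and nef then $\int_M\mu^m>0$, so the Gromov--Vafa--Witten trick exactly as in the proof of Proposition \ref{L2spec}(3) --- which uses only $\int_M\mu^m\ne0$, no positivity of an auxiliary bundle --- yields $0\in\sigma(\tilde\Delta_{\overline\partial,m,q})$ for some $q\in\{0,\dots,m\}$. The spectral gap theorem for weakly K\"ahler hyperbolic manifolds (\cite{BDET}, in the spirit of \cite{Gro91}) excludes $0$ from $\sigma(\tilde\Delta_{\overline\partial,m,q})$ whenever $q\ge1$ (the total degree $m+q$ being then off the middle one), and then the closed-range argument of Proposition \ref{L2spec}(2) upgrades $0\in\sigma(\tilde\Delta_{\overline\partial,m,0})$ to $\ker(\tilde\Delta_{\overline\partial,m,0})\ne\{0\}$. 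Fix $0\ne\eta\in\ker(\tilde\Delta_{\overline\partial,m,0})\subset L^2\Omega^{m,0}(\tilde M)$; by elliptic regularity $\eta$ is smooth, and since $\ker(\tilde\Delta_{\overline\partial,m,0})=\ker(\overline\partial_{m,0})$ it is a nonzero holomorphic $m$-form on $\tilde M$.

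Next I would apply the Bochner--Kodaira--Nakano identity for the Chern connection $\tilde\nabla$ of $K_{\tilde M}$ to $\eta$, as in the proof of Corollary \ref{up-es} but with trivial coefficients (so the curvature term $[i\Theta(\tilde E),\tilde\Lambda]$ is absent), and integrate by parts on the complete manifold $\tilde M$ --- legitimate since $\eta\in L^2$ is harmonic and $\mathrm{scal}_{\tilde h}$ is bounded --- to get
$$
0=\langle 2\tilde\Delta_{\overline\partial,m,0}\eta,\eta\rangle_{L^2}=\int_{\tilde M}\Bigl(|\tilde\nabla\eta|^2_{\tilde h}+\tfrac{\mathrm{scal}_{\tilde h}}{2}\,|\eta|^2_{\tilde h}\Bigr)\dvol_{\tilde h}.
$$
Since $\eta$ is holomorphic, $\tilde\nabla\eta=\tilde\nabla^{1,0}\eta$; writing $u:=|\eta|_{\tilde h}$, from $\partial|\eta|^2=\langle\tilde\nabla^{1,0}\eta,\eta\rangle$ one gets $|du|^2=|\langle\tilde\nabla^{1,0}\eta,\eta\rangle|^2/(2u^2)\le|\tilde\nabla^{1,0}\eta|^2/2$ by Cauchy--Schwarz, i.e.\ the refined Kato inequality \cite{Ref-Kato} with the optimal constant $2$ in top bidegree, so $|\tilde\nabla\eta|^2_{\tilde h}\ge2\,|du|^2_{\tilde h}$ pointwise, with $u$ in the domain of the $L^2$-closed extension of $d$ by the Kato inequality as in the proof of Corollary \ref{C-In2}. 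By Proposition \ref{C-In}, $\tilde\lambda_{0,h}>0$ and $\|du\|^2_{L^2}\ge\tilde\lambda_{0,h}\|u\|^2_{L^2}$, so
$$
0\ \ge\ 2\tilde\lambda_{0,h}\|u\|^2_{L^2}+\tfrac12\int_{\tilde M}\mathrm{scal}_{\tilde h}\,u^2\,\dvol_{\tilde h}=\tfrac12\int_{\tilde M}\bigl(4\tilde\lambda_{0,h}+\mathrm{scal}_{\tilde h}\bigr)\,u^2\,\dvol_{\tilde h}.
$$

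Finally, the zero set of the nonzero holomorphic form $\eta$ is a proper analytic subset of $\tilde M$, so $u^2>0$ almost everywhere. If $\mathrm{scal}_h>-4\tilde\lambda_{0,h}$ on all of $M$ then the last integrand is $>0$ a.e., contradicting the displayed inequality; hence $\min_M\mathrm{scal}_h\le-4\tilde\lambda_{0,h}$. If moreover equality holds, then $4\tilde\lambda_{0,h}+\mathrm{scal}_{\tilde h}\ge0$ everywhere while its integral against $u^2$ is $\le0$, which forces $4\tilde\lambda_{0,h}+\mathrm{scal}_{\tilde h}\equiv0$ and hence, by continuity, $\mathrm{scal}_h\equiv-4\tilde\lambda_{0,h}$; the converse implication is trivial. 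I expect the main obstacle to be the first step --- exhibiting a nonzero $L^2$ holomorphic $m$-form on $\tilde M$ for an \emph{arbitrary} weakly K\"ahler hyperbolic $M$, since in general $K_M$ is only big and nef (not ample), so one cannot simply regard it as a positive line bundle and invoke Theorem \ref{nsp} directly; the secondary, more technical point is verifying that the refined Kato constant in top bidegree equals $2$, which is what produces the sharp factor $4$.
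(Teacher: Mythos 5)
Your argument is correct and follows essentially the same route as the paper: a nonzero $L^2$ holomorphic $(m,0)$-form on $\tilde M$, the Weitzenb\"ock formula for $\tilde\Delta_{\overline\partial,m,0}$, the refined Kato inequality with constant $c=2$ for holomorphic sections (which is the sharp value in top bidegree, as you derive by Cauchy--Schwarz and the paper obtains from \cite{refKato}), and the Cheeger-type lower bound $\tilde\lambda_{0,h}$ from Proposition \ref{C-In}, with the integration by parts on the complete cover justified along the lines of \cite{VanishingDod}. The only place you differ from the paper is the first step: the paper simply invokes \cite[Cor.~3.8]{BDET}, which already asserts that the universal cover of a weakly K\"ahler hyperbolic manifold carries an infinite-dimensional space of $L^2$ holomorphic $(m,0)$-forms, whereas you re-derive that nonvanishing from scratch (Gromov--Vafa--Witten trick applied to a big nef $[\mu]$, the off-middle-degree spectral gap of \cite{BDET}, and the closed-range upgrade). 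Your re-derivation is sound and is precisely the content of the cited BDET result, so this is a matter of exposition rather than of substance; the concern you flag about not being able to invoke Theorem \ref{nsp} directly is well taken and is exactly why the paper appeals to \cite{BDET} instead.
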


\begin{proof}
Also in this case we proceed by contradiction. We thus assume that the above statement does not hold true and so there exists a K\"ahler metric on $M$ such that $\mathrm{scal}_h\geq -4\tilde{\lambda}_{0,h}$ and $\mathrm{scal}_h(x)> -4\tilde{\lambda}_{0,h}$ for some $x\in M$. Let $\eta$ be an $L^2$ holomorphic $(m,0)$-form on $(\tilde{M},\tilde{h})$. Note that  the Weitzenb\"ock formula on $(\tilde{M},\tilde{h})$ for $\tilde{\Delta}_{\overline{\partial},m,0}$ reads as $$\tilde{\Delta}_{\overline{\partial},m,0}=\frac{1}{2}\left(\nabla^t\circ \nabla+\frac{\mathrm{scal}_{\tilde{h}}}{2}\right)$$ 
with $\nabla$ the connection on $K_{\tilde{M}}$ induced by the Chern connection of $(\tilde{M},\tilde{h})$. Moreover by the fact that $\mathrm{scal}_{\tilde{h}}$ is bounded from below we get that $\nabla \eta\in L^2(\tilde{M},T^*M\otimes K_M)$ and $$\langle \nabla^t(\nabla \eta),\eta \rangle_{L^2\Omega^{m,0}(\tilde{M},\tilde{h})}=\langle\nabla\eta,\nabla\eta\rangle_{L^2(\tilde{M},T^*M\otimes K_M)},$$ see \cite[Th. 2]{VanishingDod}. In this way, keeping in mind the refined Kato inequality \cite[Cor. 4.3]{refKato},  we get 
$$
\begin{aligned}
0&=\langle 2\tilde{\Delta}_{\overline{\partial},m,0}\eta,\eta\rangle_{L^2\Omega^{m,0}(\tilde{M},\tilde{h})}\\
&=\int_{\tilde{M}}\left(|\nabla \eta|^2_{\tilde{h}}+\frac{\mathrm{scal}_{\tilde{h}}}{2}|\eta|^2_{\tilde{h}}\right)\dvol_{\tilde{h}}\\
&\geq \int_{\tilde{M}}\left(2|d|\eta|_{\tilde{h}}|^2_{\tilde{h}}+\frac{\mathrm{scal}_{\tilde{h}}}{2}|\eta|^2_{\tilde{h}}\right)\dvol_{\tilde{h}}\\
(\mathrm{by\ Prop.}\ \ref{C-In}) &\geq \int_{\tilde{M}}\left((2\tilde{\lambda}_{0,h}+\frac{\mathrm{scal}_{\tilde{h}}}{2})|\eta|^2_{\tilde{h}}\right)\dvol_{\tilde{h}}\\
& \geq 0,
\end{aligned}
$$
where, with a little abuse of notation, we have denoted with $|\ |_{\tilde{h}}$ all the Hermitian metrics induced by $\tilde{h}$.

Since $\eta$ is holomorphic and $2\tilde{\lambda}_{0,h}+\frac{\mathrm{scal}_{\tilde{h}}}{2}$ is non-negative and somewhere positive, we can conclude that $\eta$ vanishes on $\tilde{M}$ and consequently $(\tilde{M},\tilde{h})$ carries no non-trivial $L^2$-$(m,0)$ holomorphic forms. On the other hand by \cite[Cor. 3.8]{BDET} we know that $(\tilde{M},\tilde{h})$ carries an infinite dimensional vector space of $L^2$ holomorphic $(m,0)$-forms. We have thus reached a contradiction, as required.
\end{proof}

\begin{rem}
Note that Th. \ref{scalcurv} provides a negative upper bound for the minimum of the scalar curvature of $(M,h)$ that depends only on the isospectral class (in the realm of weakly hyperbolic K\"ahler manifold) of $(M,h)$.
\end{rem}

Arguing as in Cor. \ref{up-es} we have also the following estimates.

\begin{cor}
\label{up-es2}
Let $(M,h)$ be a Kähler topologically hyperbolic manifold and let 
 $c>1$ be the refined Kato constant of $K_M\otimes \Lambda^{1,0}(M)$ w.r.t. $\overline{\partial}_{\Lambda^{1,0}(M),m,0}$. Then 
\begin{enumerate}
\item If $(\Lambda^{1,0}(M),h)$ is Nakano positive over an open subset $A\subset M$ of full measure we have
$$
\min_M\left(\mathrm{scal}_h+2\mathrm{Ric}_h\right)\leq -2c\tilde{\lambda}_{0,h}
$$
 and the equality occurs if and only if 
$$
\mathrm{scal}_h+2r_1\equiv -2c\tilde{\lambda}_{0,h}.
$$
\item If $(\Lambda^{1,0}(M),h)$ is Griffiths positive over an open subset $A\subset M$ of full measure we have
$$
\min_M\left(\mathrm{scal}_h+\mathrm{Ric}_h\right)\leq -c\tilde{\lambda}_{0,h}
$$
 and the equality occurs if and only if 
$$
\mathrm{scal}_h+r_1\equiv -c\tilde{\lambda}_{0,h}.
$$
\end{enumerate}
\end{cor}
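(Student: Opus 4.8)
The plan is to repeat, with a suitable choice of positive bundle, the contradiction argument of Corollary \ref{up-es}. For part (1), set $E=\Lambda^{1,0}(M)$ with the Hermitian metric induced by $h$; by hypothesis $E$ is Nakano positive over the full-measure open set $A$, so Theorem \ref{nsp} applies and gives $\ker(\tilde\Delta_{\overline\partial_{\tilde E},m,0})\neq\{0\}$. Suppose, for a contradiction, that $\mathrm{scal}_h+2\mathrm{Ric}_h+2c\tilde\lambda_{0,h}\ge 0$ on $M$ with strict inequality somewhere. As in Corollary \ref{up-es}, the Akizuki--Nakano identity for the Chern connection of $K_{\tilde M}\otimes\tilde E$ reads, on $(m,0)$-forms,
$$
\tilde\Delta_{\overline\partial_{\tilde E},m,0}=\Delta'_{K_{\tilde M}\otimes\tilde E}+\frac{\mathrm{scal}_{\tilde h}}{2}+[i\Theta(\tilde E),\tilde\Lambda],
$$
and a direct computation with the K\"ahler identities --- consistent with the normalization $[i\Theta(K_M),\Lambda]=\mathrm{scal}_h/2$ together with $\mathrm{scal}_h=2(r_1+\cdots+r_m)$ upon tracing over the bundle --- identifies $[i\Theta(\Lambda^{1,0}(\tilde M)),\tilde\Lambda]$, acting on the $\Lambda^{1,0}$-factor of $\Lambda^{m,0}(\tilde M)\otimes\Lambda^{1,0}(\tilde M)$, with the Ricci endomorphism $\mathrm{ric}^{1,0}_{\tilde h}$, whose lowest eigenvalue is $\tilde r_1$. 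Pairing the identity with a holomorphic $\eta\in\ker(\tilde\Delta_{\overline\partial_{\tilde E},m,0})$, using the refined Kato inequality $c\,|d|\eta||^2_{\tilde h}\le|\tilde\nabla\eta|^2_{\tilde h\otimes\tilde\tau}$ and then Proposition \ref{C-In} exactly as in Corollary \ref{up-es}, we obtain
$$
0=\langle 2\tilde\Delta_{\overline\partial_{\tilde E},m,0}\eta,\eta\rangle_{L^2\Omega^{m,0}(\tilde M,\tilde E)}\ \ge\ \int_{\tilde M}\Bigl(c\tilde\lambda_{0,h}+\tfrac{\mathrm{scal}_{\tilde h}}{2}+\tilde r_1\Bigr)|\eta|^2_{\tilde h\otimes\tilde\tau}\,\dvol_{\tilde h}\ \ge\ 0,
$$
whose integrand is non-negative and positive somewhere, forcing $\eta\equiv 0$: a contradiction. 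Hence $\min_M(\mathrm{scal}_h+2\mathrm{Ric}_h)\le -2c\tilde\lambda_{0,h}$. In the equality case, if $\min_M(\mathrm{scal}_h+2r_1)=-2c\tilde\lambda_{0,h}$ while $\mathrm{scal}_h+2r_1\not\equiv-2c\tilde\lambda_{0,h}$, then $\mathrm{scal}_h+2r_1+2c\tilde\lambda_{0,h}$ is non-negative and positive somewhere and the same computation forces $\ker(\tilde\Delta_{\overline\partial_{\tilde E},m,0})=\{0\}$, again contradicting Theorem \ref{nsp}; the reverse implication is trivial.

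For part (2), the hypothesis only gives Griffiths positivity of $\Lambda^{1,0}(M)$ over $A$, hence possibly not Nakano positivity, so Theorem \ref{nsp} cannot be applied to $\Lambda^{1,0}(M)$ directly. Instead, by the Demailly--Skoda theorem \cite{Dema-Skoda} (used already in Corollary \ref{gsp}) the bundle $E':=\Lambda^{1,0}(M)\otimes\det\Lambda^{1,0}(M)=\Lambda^{1,0}(M)\otimes K_M$, with the induced metric, is Nakano positive over $A$; Theorem \ref{nsp} applied to $E'$ gives $\ker(\tilde\Delta_{\overline\partial_{\tilde E'},m,0})\neq\{0\}$, and we run the argument of part (1) verbatim with $E'$ in place of $E$. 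The only change is the curvature bookkeeping: since $K_{\tilde M}\otimes E'=K_{\tilde M}^{\otimes 2}\otimes\Lambda^{1,0}(\tilde M)$, the curvature term in the Akizuki--Nakano identity now splits as $2\,[i\Theta(K_{\tilde M}),\tilde\Lambda]+[i\Theta(\Lambda^{1,0}(\tilde M)),\tilde\Lambda]=\mathrm{scal}_{\tilde h}+\mathrm{ric}^{1,0}_{\tilde h}$, so the Bochner integrand becomes $c\tilde\lambda_{0,h}+\mathrm{scal}_{\tilde h}+\tilde r_1$ (one copy of $\tfrac{\mathrm{scal}_{\tilde h}}{2}$ from the $(m,0)$-twist, one from the $K_M$-factor of $E'$, and $\tilde r_1$ from $\mathrm{ric}^{1,0}_{\tilde h}$). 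Negating the conclusion then forces $\eta\equiv 0$ and yields $\min_M(2\mathrm{scal}_h+2r_1)\le -2c\tilde\lambda_{0,h}$, i.e. $\min_M(\mathrm{scal}_h+\mathrm{Ric}_h)\le -c\tilde\lambda_{0,h}$, with the equality case handled as before. One also uses here that the refined Kato constant is still $c$: both $K_M\otimes\Lambda^{1,0}(M)$ and $K_M^{\otimes 2}\otimes\Lambda^{1,0}(M)$ have rank $m$, and the refined Kato constant of $\overline\partial$ on $(m,0)$-forms with coefficients in a Hermitian holomorphic bundle depends only on that rank (and on $m$), not on the bundle itself.

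The main obstacle will be the two curvature identifications --- checking that $[i\Theta(\Lambda^{1,0}(M)),\Lambda]$ on $\Lambda^{m,0}(M)\otimes\Lambda^{1,0}(M)$ really is $\mathrm{ric}^{1,0}$ with lowest eigenvalue $r_1$, and the clean splitting of the $K_M^{\otimes 2}$ contribution in part (2) --- together with the (standard) fact that the refined Kato constant is insensitive to the extra $K_M$-twist; everything else is a line-by-line transcription of the proof of Corollary \ref{up-es}.
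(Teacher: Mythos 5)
Your proof is correct and follows the paper's route: both parts reduce to Corollary \ref{up-es} via the identities $[i\Theta(\Lambda^{1,0}(M)),\Lambda]=\mathrm{ric}^{1,0}$ (Weitzenb\"ock vs.\ Akizuki--Nakano) and, for part (2), Demailly--Skoda applied to $K_M\otimes\Lambda^{1,0}(M)$. The only differences are that you unwind the contradiction argument of Corollary \ref{up-es} rather than cite it, and that you explicitly justify (where the paper leaves it implicit) why the refined Kato constant for $K_M^{\otimes 2}\otimes\Lambda^{1,0}(M)$ agrees with that for $K_M\otimes\Lambda^{1,0}(M)$.
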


\begin{proof}
By comparing the Weitzenb\"ock formula for the Hodge Laplacian acting on one form with the Akizuki--Nakano identity we get $$[i\Theta(\Lambda^{1,0}(M)),\Lambda]=\mathrm{ric}^{1,0}$$ with $\mathrm{ric}^{1,0}$ defined in \eqref{ric10}. The conclusion follows now by Cor. \ref{up-es}. 

If $(\Lambda^{1,0}(M),h)$ is Griffiths positive over $A\subset M$ then $(K_M\otimes\Lambda^{1,0}(M))$ is Nakano positive over $A$, see \cite{Dema-Skoda}. In this case we have 
$$
\bigl[i\Theta\bigl(K_M\otimes\Lambda^{1,0}(M)\bigr),\Lambda\bigr]=\frac{\mathrm{scal}_h}{2}+\mathrm{ric}_h
$$ 
and now the conclusion follows again by  Cor. \ref{up-es}.
\end{proof}

We collect now some consequences that follow easily from Prop. \ref{curvature1} and Th. \ref{scalcurv}.

\begin{cor}\label{cor:noKm}
Let $M$ be a weakly K\"ahler hyperbolic manifold. Then, $M$ carries no K\"ahler metric with $\mathrm{scal}_h>-4\tilde{\lambda}_{0,h}$.
\end{cor}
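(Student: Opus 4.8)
The plan is to deduce this immediately from Theorem \ref{scalcurv}, which already carries all the analytic content (the $L^2$-Weitzenb\"ock estimate on the universal cover combined with the existence of $L^2$ holomorphic $(m,0)$-forms). I would argue by contradiction. Suppose $M$ admits a K\"ahler metric $h$ with $\mathrm{scal}_h(x)>-4\tilde{\lambda}_{0,h}$ for every $x\in M$. Since $M$ is compact and $\mathrm{scal}_h\colon M\to\mathbb{R}$ is continuous, the minimum $\min_M(\mathrm{scal}_h)$ is attained, so the strict pointwise inequality forces $\min_M(\mathrm{scal}_h)>-4\tilde{\lambda}_{0,h}$.

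This contradicts the inequality $\min_M(\mathrm{scal}_h)\le-4\tilde{\lambda}_{0,h}$ furnished by Theorem \ref{scalcurv}, so no such metric can exist. Observe that one does not even need to invoke the rigidity (equality) part of Theorem \ref{scalcurv}: the strict pointwise bound already contradicts the plain inequality.

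There is essentially no obstacle here, as the corollary is just a reformulation of Theorem \ref{scalcurv}. The point worth emphasizing for the reader is the quantitative content: since a weakly K\"ahler hyperbolic $M$ is in particular K\"ahler topologically hyperbolic, one has $V^{2m}_{\mathrm{hyp}}(M)=H^{2m}(M,\mathbb{R})$, whence $\tilde{\lambda}_{0,h}>0$ by Proposition \ref{C-In}; thus $-4\tilde{\lambda}_{0,h}<0$, and in particular $M$ carries no K\"ahler metric of everywhere nonnegative scalar curvature, sharpening the qualitative obstruction coming from \cite{BDPP13}.
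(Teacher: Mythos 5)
Your proof is correct and is exactly the route the paper intends: the corollary is stated as an immediate consequence of Theorem \ref{scalcurv}, and your contradiction argument (compactness turns the strict pointwise bound into a strict bound on the minimum, contradicting $\min_M(\mathrm{scal}_h)\le -4\tilde{\lambda}_{0,h}$) spells out precisely that deduction. Your closing remark that $\tilde{\lambda}_{0,h}>0$ via Proposition \ref{C-In} is also the right justification for why the bound is strictly negative.
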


\begin{rem}\label{rem:bdpp}
We know by \cite{BDET} that a weakly K\"ahler hyperbolic manifold is projective of general type. In particular, it is not uniruled nor Calabi--Yau. As remarked in \cite[Theorem 1.4]{GW12}, a consequence of \cite{BDPP13} is that every K\"ahler metric on $M$ needs to have negative total scalar curvature. We can thus interpret the above corollary as a quantitative information of this fact in the special case of weakly K\"ahler hyperbolic manifolds: not only one cannot have K\"ahler metrics whose scalar curvature average is non-negative but also point-wise the scalar curvature has to become somewhere negative enough in a precise sense. 
\end{rem}

\begin{cor}
Let $M$ be a K\"ahler topologically hyperbolic manifold of complex dimension $m$, and let $h$ be an arbitrarily fixed K\"ahler metric on $M$. If $a\in \mathbb{R}$ verifies $\mathrm{Ric}_h\geq a$ then $$a\leq -\frac{\tilde{\lambda}_{0,h}}{m}.$$
\end{cor}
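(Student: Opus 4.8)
The plan is to deduce this directly from Proposition \ref{curvature1}, so there is almost nothing to do beyond unwinding the hypotheses and handling the worst case of the index produced there. First I would observe that the assumption $\mathrm{Ric}_h\geq a$ means precisely that the endomorphism $\mathrm{ric}_h-a\,\mathrm{Id}_{TM}$ is non-negative at every point, equivalently that the smallest eigenvalue function $r_1$ of $\mathrm{ric}_h$ (in the notation of \eqref{eigenricci}) satisfies $r_1\geq a$ on all of $M$; since $r_1\leq r_2\leq\cdots\leq r_m$, this forces $r_j\geq a$ pointwise for every $j=1,\dots,m$.

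Next I would invoke Proposition \ref{curvature1}: there exist an index $p\in\{1,\dots,m\}$ and a closed subset $S\subset M$ of positive measure (hence in particular $S\neq\emptyset$) on which $r_1+r_2+\cdots+r_p\leq-\tilde{\lambda}_{0,h}$. Picking any point $x_0\in S$ and combining with the pointwise bound $r_j(x_0)\geq a$ obtained above, one gets $p\,a\leq\sum_{j=1}^{p}r_j(x_0)\leq-\tilde{\lambda}_{0,h}$, that is, $a\leq-\tilde{\lambda}_{0,h}/p$.

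To conclude, I would use that a K\"ahler topologically hyperbolic manifold satisfies $V^{2m}_{\mathrm{hyp}}(M)=H^{2m}(M,\mathbb{R})$ --- this is exactly the third bullet of Section \ref{sec:bimer}'s preliminaries (stated right after Definition \ref{def:thm}), applied with $k=2$ and $\ell=m$ --- so that Proposition \ref{C-In} yields $\tilde{\lambda}_{0,h}>0$. Since $1\leq p\leq m$ and $\tilde{\lambda}_{0,h}>0$, we have $-\tilde{\lambda}_{0,h}/p\leq-\tilde{\lambda}_{0,h}/m$, and therefore $a\leq-\tilde{\lambda}_{0,h}/m$, as claimed.

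There is no genuine obstacle here: the analytic content has already been packaged into Proposition \ref{curvature1}, and the only point requiring a modicum of care is that the index $p$ furnished by that proposition is a priori unknown, so one must pass to the worst possible case $p=m$ --- which is legitimate precisely because $\tilde{\lambda}_{0,h}$ is strictly positive on a K\"ahler topologically hyperbolic manifold.
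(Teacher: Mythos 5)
Your proof is correct and follows essentially the same route as the paper: both deduce the bound from Proposition \ref{curvature1} by comparing the guaranteed inequality $r_1+\cdots+r_p\leq-\tilde{\lambda}_{0,h}$ with the pointwise lower bound $r_j\geq a$, and both implicitly or explicitly use $\tilde{\lambda}_{0,h}>0$ (via Proposition \ref{C-In}) to pass to the worst case $p=m$. The only cosmetic difference is that the paper phrases the argument as a proof by contradiction while you argue directly.
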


\begin{proof}
If $a>-\frac{\tilde{\lambda}_{0,h}}{m}$ then $\mathrm{Ric}_h> -\frac{\tilde{\lambda}_{0,h}}{m}$ and consequently $$\mathrm{ric}^{p,0}>-\frac{p}{m}\tilde{\lambda}_{0,h}\geq -\tilde{\lambda}_{0,h}$$ for each $p=1,\dots,m$. This contradicts Prop. \ref{curvature1}.
\end{proof}

If the Ricci curvature is suitably negative we get further interesting geometric consequences. More precisely:

\begin{thm}
\label{Ricci2}
Let $M$ be a Kähler topologically hyperbolic manifold of complex dimension $m$. If there exists a K\"ahler metric $h$ such that for each $p\in\{1,\dots,m-1\}$ the inequality $$\mathrm{ric}^{p,0}> -\tilde{\lambda}_{0,h}$$ holds true over an open subset of $M$ of full measure  then $$\chi(M,K_M)=h^{m,0}_{(2),\overline{\partial}}(M)>0.$$ If in addition $M$ has generically large fundamental group then $K_M$ is big and thus $M$ is projective. 
\end{thm}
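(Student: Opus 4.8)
The plan is to rerun, with trivial coefficients, the combination of arguments behind Proposition \ref{L2spec} and Proposition \ref{curvature1}; the only real novelty is that the present hypothesis gaps the Laplacians $\tilde{\Delta}_{\overline{\partial},m,q}$ exactly for $q=1,\dots,m$ and \emph{not} for $q=0$, which is precisely what lets the Gromov--Vafa--Witten trick \emph{produce} a nonzero $L^2$-harmonic $(m,0)$-form rather than yield a contradiction. Note first that, $M$ being K\"ahler topologically hyperbolic, one has $V^{2m}_{\mathrm{hyp}}(M)=H^{2m}(M,\R)$, so Proposition \ref{C-In} gives $\tilde{\lambda}_{0,h}>0$ and Corollary \ref{C-In2} applies to the (metric) Chern connection $\nabla$ on any $\Lambda^{p,0}$, yielding $\sigma(\nabla^t\circ\nabla)\subset[\tilde{\lambda}_{0,h},\infty)$ for the lifted Bochner Laplacian. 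I would then use the Weitzenb\"ock identity $\tilde{\Delta}_{\overline{\partial},p,0}=\tfrac{1}{2}(\nabla^t\circ\nabla+\mathrm{ric}^{p,0})$ on $\tilde{M}$: intersecting the finitely many open sets of full measure furnished by the hypothesis gives a single open $U\subset M$ with $\vol_h(U)=\vol_h(M)$ on which $\mathrm{ric}^{p,0}>-\tilde{\lambda}_{0,h}$ holds simultaneously for all $p\in\{1,\dots,m-1\}$. Theorem \ref{pspec} applied with $P=2\Delta_{\overline{\partial},p,0}=\nabla^t\circ\nabla+\mathrm{ric}^{p,0}$, $D=\nabla^t\circ\nabla$ and $L=\mathrm{ric}^{p,0}$ --- so that $L+\min\sigma(\tilde{D})\geq\mathrm{ric}^{p,0}+\tilde{\lambda}_{0,h}>0$ on $U$ --- gives $0\notin\sigma(\tilde{\Delta}_{\overline{\partial},p,0})$ for $p=1,\dots,m-1$, while the same holds for $p=0$ because $\tilde{\Delta}_{\overline{\partial},0,0}=\tfrac{1}{2}\tilde{\Delta}$ (as $(M,h)$ is K\"ahler) and $\tilde{\lambda}_{0,h}>0$. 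Exactly as in the proof of Proposition \ref{curvature1}, conjugation together with the Hodge star operator then transports this to $0\notin\sigma(\tilde{\Delta}_{\overline{\partial},m,q})$ for every $q=1,\dots,m$ (the index $p\in\{0,\dots,m-1\}$ becoming $q=m-p\in\{1,\dots,m\}$), so that $h^{m,q}_{(2),\overline{\partial}}(M)=\dim_{\Gamma}\ker(\tilde{\Delta}_{\overline{\partial},m,q})=0$ for every $q\geq 1$.

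To conclude the first assertion I would argue exactly as in Proposition \ref{L2spec}. Closedness of $\im(\tilde{\Delta}_{\overline{\partial},m,1})$ forces $\overline{\partial}_{m,0}$, hence $\overline{\partial}_{m,0}^{*}$, hence $\tilde{\Delta}_{\overline{\partial},m,0}=\overline{\partial}_{m,0}^{*}\circ\overline{\partial}_{m,0}$, to have closed range. Meanwhile the Gromov--Vafa--Witten trick, run verbatim with the smooth closed $\tilde{d}$-bounded real $2$-form $\mu$ furnished by K\"ahler topological hyperbolicity (for which $\int_M\mu^m\neq 0$), makes the twisted $L^2_{\Gamma_s}$-index of the operator $\overline{D}^{s}_{m}$ (built, as in the proof of Proposition \ref{curvature1}, by twisting $\overline{\eth}_{m}$ with the connection $\nabla^s$) a non-constant polynomial in $s$ whose leading coefficient involves $\int_M\mu^m$; hence $\ker(\overline{D}^{s}_{m})\neq\{0\}$ for all small $s>0$, so $0\in\sigma(\overline{\eth}_{m})$, and since $\overline{\eth}_{m}^{2}=\bigoplus_{q=0}^{m}\tilde{\Delta}_{\overline{\partial},m,q}$ we get $0\in\sigma(\tilde{\Delta}_{\overline{\partial},m,q})$ for some $q$. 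By the previous paragraph this forces $q=0$, and together with the closed range just established $0$ is an eigenvalue of $\tilde{\Delta}_{\overline{\partial},m,0}$, i.e.\ $h^{m,0}_{(2),\overline{\partial}}(M)>0$. Atiyah's $L^2$-index theorem then gives $\chi(M,K_M)=\sum_{q=0}^{m}(-1)^q h^{m,q}_{(2),\overline{\partial}}(M)=h^{m,0}_{(2),\overline{\partial}}(M)>0$, a positive integer.

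For the last statement I would feed $\chi(M,K_M)>0$ into Koll\'ar's theory of Shafarevich maps (cf.\ \cite{Kollarbook}): a compact K\"ahler manifold with generically large fundamental group and $\chi(K_M)>0$ has $K_M$ big, and a compact K\"ahler manifold carrying a big line bundle is Moishezon, hence projective. This final step is where I expect the main obstacle to lie: everything up to $\chi(M,K_M)>0$ is a faithful adaptation of the proofs of Propositions \ref{L2spec} and \ref{curvature1}, the only delicate point there being the index bookkeeping --- the hypothesis gaps $\tilde{\Delta}_{\overline{\partial},m,q}$ for $q=1,\dots,m$ but leaves $q=0$ untouched, so that it is consistent with, and in fact detected by, the nontriviality of the GVW index polynomial --- whereas passing from $\chi(M,K_M)>0$ to bigness of $K_M$ genuinely needs the topological hypothesis ($\chi(K_M)>0$ alone does not force bigness) together with the form of Koll\'ar's theorem valid in the compact K\"ahler, not a priori projective, category.
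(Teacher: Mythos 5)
Your proposal is correct and follows essentially the same path as the paper's proof of Theorem \ref{Ricci2}, which is stated rather tersely as a combination of the arguments in Proposition \ref{curvature1} (Weitzenb\"ock plus Theorem \ref{pspec} plus Corollary \ref{C-In2} to kill $\sigma(\tilde{\Delta}_{\overline{\partial},m,q})$ for $q\ge 1$) and Theorem \ref{nsp} / Proposition \ref{L2spec} (the Gromov--Vafa--Witten index trick to put $0$ into $\sigma(\overline{\eth}_m)$, then the closed-range argument to upgrade $0\in\sigma(\tilde{\Delta}_{\overline{\partial},m,0})$ to an actual eigenvalue), followed by Atiyah's $L^2$-index theorem and Koll\'ar's \cite[Cor.~13.10]{Kollarbook}; you spell out exactly these steps, including the index bookkeeping $p\mapsto q=m-p$ and the application of Theorem \ref{pspec} with $D=\nabla^t\nabla$, $L=\mathrm{ric}^{p,0}$.
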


\begin{proof}
 Arguing as in the proof of Th. \ref{curvature1} we can conclude that $0$ does not lie in the spectrum of $\tilde{\Delta}_{\overline{\partial},m,q}\colon L^2\Omega^{m,q}(\tilde{M},\tilde{h})\rightarrow L^2\Omega^{m,q}(\tilde{M},\tilde{h})$ for each $q=1,\dots,m$. On the other hand, by adopting the same strategy used in the proof of Th. \ref{nsp}, we know that $0$ lies in the spectrum of 
 $$
 \overline{\eth}_{m}\colon L^2\Omega^{m,\bullet}(\tilde{M},\tilde{h})\rightarrow L^2\Omega^{m,\bullet}(\tilde{M},\tilde{h})
 $$ 
and consequently $0$ lies in the spectrum of 
$$
\tilde{\Delta}_{\overline{\partial},m,0}\colon L^2\Omega^{m,0}(\tilde{M},\tilde{h})\rightarrow L^2\Omega^{m,0}(\tilde{M},\tilde{h}).
$$ 
Note now that since $0\notin \sigma(\tilde{\Delta}_{\overline{\partial},m,1})$ we know that $\im(\tilde{\Delta}_{\overline{\partial},m,1})$ is closed in $L^2\Omega^{m,1}(\tilde{M},\tilde{h})$. Consequently $\overline{\partial}_{m,0}\colon L^2\Omega^{m,0}(\tilde{M},\tilde{h})\rightarrow L^2\Omega^{m,1}(\tilde{M},\tilde{h})$ has closed range and this in turn implies that its adjoint $\overline{\partial}_{m,0}^*\colon L^2\Omega^{m,1}(\tilde{M},\tilde{h})\rightarrow L^2\Omega^{m,1}(\tilde{M},\tilde{h})$ has closed range, too. 

Finally since both $\overline{\partial}_{m,0}$ and $\overline{\partial}_{m,0}^*$ have closed range we can conclude that $\overline{\partial}_{m,0}^*\circ \overline{\partial}_{m,0}\colon L^2\Omega^{m,0}(\tilde{M},\tilde{h})\rightarrow L^2\Omega^{m,0}(\tilde{M},\tilde{h})$ has closed range, that is $\im(\tilde{\Delta}_{\overline{\partial},m,0})$ is closed in $L^2\Omega^{m,0}(\tilde{M},\tilde{h})$. Now, since  $\im(\tilde{\Delta}_{\overline{\partial},m,0})$ is closed and $0\in \sigma(\tilde{\Delta}_{\overline{\partial},m,0})$, we can deduce eventually that $\ker(\tilde{\Delta}_{\overline{\partial},m,0})\neq \{0\}$ and thus $h^{m,0}_{(2),\overline{\partial}}(M)>0$. The conclusion now follows by Atiyah's $L^2$-index theorem. Namely $$h^{m,0}_{(2),\overline{\partial}}(M)=\chi(M,K_M)>0$$ as required. 

Assume now that $M$ has generically large fundamental group. Since we showed above that $(\tilde{M},\tilde{h})$ carries non trivial $L^2$-holomorphic $(m,0)$-forms we can apply \cite[Cor. 13.10]{Kollarbook} to conclude that $K_M$ is big. 

Finally since $K_M$ is big and $(M,h)$ is K\"ahler we can conclude that $M$ is projective.
\end{proof}

\begin{cor}
Let $M$ be Kähler topologically hyperbolic manifold of complex dimension $m>1$. If there exists a K\"ahler metric $h$ such that $$\mathrm{Ric}_h> -\frac{\tilde{\lambda}_{0,h}}{m-1}$$  then $$\chi(M,K_M)=h^{m,0}_{(2),\overline{\partial}}(M)>0.$$ If in addition $M$ has generically large fundamental group then $K_M$ is big and thus $M$ is projective. 
\end{cor}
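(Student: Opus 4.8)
The plan is to derive this corollary directly from Theorem \ref{Ricci2}, by showing that the single curvature hypothesis $\mathrm{Ric}_h>-\tilde\lambda_{0,h}/(m-1)$ forces the whole family of pointwise bounds $\mathrm{ric}^{p,0}>-\tilde\lambda_{0,h}$ for $p\in\{1,\dots,m-1\}$ that Theorem \ref{Ricci2} requires.

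First I would recall, following the discussion preceding Proposition \ref{curvature1}, that at each point the eigenvalues of $\mathrm{ric}^{1,0}\colon T^{1,0}M\to T^{1,0}M$ are $r_1\le\cdots\le r_m$, and that $\mathrm{ric}^{p,0}$ on $\Lambda^{p,0}(M)$ is the derivation extension of $\mathrm{ric}^{1,0}$; working in a unitary coframe that diagonalises the Ricci endomorphism, $\mathrm{ric}^{p,0}$ is diagonal on the induced basis of decomposable $p$-covectors, with eigenvalues $r_{i_1}+\cdots+r_{i_p}$ for $i_1<\cdots<i_p$, the smallest being $r_1+\cdots+r_p$. I would also note that $\tilde\lambda_{0,h}>0$ in this situation: since $M$ is K\"ahler topologically hyperbolic of complex dimension $m$, the third bullet following the definition of $V^k_{\mathrm{hns}}$ (applied with $k=2$, $\ell=m$) gives $V^{2m}_{\mathrm{hyp}}(M)=H^{2m}(M,\mathbb R)$, so Proposition \ref{C-In} applies.

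The assumption $\mathrm{Ric}_h>-\tilde\lambda_{0,h}/(m-1)$ means exactly $r_1>-\tilde\lambda_{0,h}/(m-1)$ on all of $M$, hence $r_j>-\tilde\lambda_{0,h}/(m-1)$ for every $j$; summing any $p\le m-1$ of these strict inequalities yields
$$
r_1+\cdots+r_p>-\tfrac{p}{m-1}\,\tilde\lambda_{0,h}\ge -\tilde\lambda_{0,h},
$$
using $p\le m-1$ and $\tilde\lambda_{0,h}\ge 0$. Therefore $\mathrm{ric}^{p,0}>-\tilde\lambda_{0,h}$ holds over all of $M$ --- in particular over an open subset of full measure --- for each $p\in\{1,\dots,m-1\}$, which is precisely the hypothesis of Theorem \ref{Ricci2}. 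Applying that theorem gives $\chi(M,K_M)=h^{m,0}_{(2),\overline\partial}(M)>0$, and, under the additional assumption that $M$ has generically large fundamental group, that $K_M$ is big and hence $M$ is projective.

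No genuine obstacle is expected: this is the exact analogue, one power lower, of the proof of the corollary bounding $a$ by $-\tilde\lambda_{0,h}/m$, and the only mildly delicate point is the elementary spectral computation identifying $r_1+\cdots+r_p$ as the smallest eigenvalue of $\mathrm{ric}^{p,0}$, which is immediate from diagonalising in a unitary coframe.
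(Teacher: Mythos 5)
Your proof is correct and follows essentially the same route as the paper: both reduce the hypothesis $\mathrm{Ric}_h>-\tilde\lambda_{0,h}/(m-1)$ to the pointwise bounds $\mathrm{ric}^{p,0}>-\tilde\lambda_{0,h}$ for $p=1,\dots,m-1$ by computing the eigenvalues of the derivation extension of $\mathrm{ric}^{1,0}$ as sums $r_{j_1}+\cdots+r_{j_p}$, and then invoke Theorem \ref{Ricci2}. Your write-up is slightly more explicit (spelling out the diagonalisation argument and the positivity of $\tilde\lambda_{0,h}$), but there is no substantive difference.
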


\begin{proof}
As in \eqref{eigenricci} let us denote with  $\{r_1(p),r_1(p),\dots,r_{m}(p),r_{m}(p)\}$ the eigenvalues of $\mathrm{ric}_p\colon T_pM\rightarrow T_pM$. Then, as previously observed, the eigenvalues of $\mathrm{ric}_p^{1,0}\colon T_p^{1,0}M\rightarrow T_p^{1,0}M$ are $\{r_1(p),r_2(p),\dots,r_{m}(p)\}$. 

Since $\mathrm{ric}_p^{s,0}\colon \Lambda^{s,0}(M)\rightarrow \Lambda^{s,0}(M)$ is the extension of $\mathrm{ric}_p^{1,0}$ as derivation,we know that the eigenvalues of $\mathrm{ric}^{s,0}_p$ are given by all the combinations $$r_{j_1}(p)+\cdots+r_{j_s}(p)$$ with $j_1,\dots,j_s\in \{1,\dots,m\}$ and $j_1<j_2<\cdots<j_s$. Now it is clear that if  $\mathrm{Ric}_h> -\tilde{\lambda}_{0,h}/(m-1)$ then $\mathrm{ric}_p^{s,0}>-\tilde{\lambda}_{0,h}$ for each $s=1,\dots,m-1$. The conclusion follows by Th. \ref{Ricci2}.  
\end{proof}

\newpage
\appendix

\section{An intrinsic characterization of hyperbolic classes, by Benoît Claudon\protect\footnote{Benoît Claudon, Univ. Rennes, CNRS, IRMAR - UMR 6625, F-35000 Rennes, France. \emph{E-mail}: \texttt{benoit.claudon@univ-rennes.fr}. \newline The author benefits from the support of the French government \lq\lq Investissements d'Avenir\rq\rq{} program integrated to France 2030, bearing the following reference ANR-11-LABX-0020-01.}}\label{sec:appendix}

\subsection*{A layman realization of the Hurewicz morphism}
	
Let $G$ be a finitely presented group. We are interested in the second cohomology group of $G$, namely
\[
H^2(G,\R):=H^2\left(C^\bullet(G,\R),d\right)\quad \text{where}\ C^k(G,\R)=\{c\colon G^k\to \R\}
\]
together with the differential
\begin{multline}\label{eq:differential}
\begin{gathered}
(d\,c)(g_1,\ldots,g_{k+1}):=c(g_2,\ldots,g_{k+1}) \\ + \sum_{i=1}^k (-1)^ic(g_1,\ldots,g_{i-1},g_ig_{i+1},g_{i+2},\ldots,g_{k+1}) + (-1)^{k+1}c(g_1,\ldots,g_k).
\end{gathered}
\end{multline}
In degree~$2$, we have thus
\[
H^2(G,\R)=\frac{Z^2(G,\R)}{dC^1(G,\R)}
\]
where
\begin{align*}
Z^2(G,\R)&:=\big\{c\colon G^2\to \R\mid \forall\,(g,h,k)\in G^3,\\
&\qquad\ c(g,h)=c(h,k)-c(gh,k)+c(g,hk)\big\}
\end{align*}
and
\begin{align*}
dC^1(G,\R)&:=\big\{c\colon G^2\to \R\mid \exists\,f\colon G\to \R,\\
&\qquad \forall\, (g,h)\in\R^2,\ c(g,h)=f(gh)-f(g)-f(h)\big\}.
\end{align*}

 It is well known that $G$ can be realized as the fundamental group of a closed manifold. Let $X$ be such a manifold and $\pi_X:\tilde{X}\to X$ its universal covering. According to a result of Hopf \cite{Hopf42}, the vector space $H^2(G,\R)$ is finite dimensional and it is canonically identified \emph{via} the Hurewicz morphism with the kernel of the map induced by $\pi_X$. In other words, the sequence
\[
0 \To H^2(G,\R) \To H^2(X,\R) \overset{\pi_X^*}{\To} H^2(\tilde{X},\R)
\]
is exact.

Let us first explain how a $2$-form in the kernel of $\pi_X^*$ gives rise to a $2$-cocyle of the group $G$. To do so, let us consider $\omega$ a closed $2$-form on $X$ such that its pull-back to the universal covering $\tilde{X}$ of $X$ can be written $\tilde{\omega}=d\alpha$ (with $\alpha$ a $1$-form on $\tilde{X}$). Using the action of $G=\pi_1(X)$ onto $\tilde{X}$, we have
$$
\forall\,g\in G,\ d(g^*\alpha)=g^*d\alpha=g^*\omega=\omega=d(\alpha)
$$
and the $1$-form $g^*\alpha-\alpha$ is closed on $\tilde{X}$. The latter being simply connected, we can write $g^*\alpha-\alpha=df_g$ where $f_g$ is a smooth function, normalized by the condition $f_g(x_0)=0$ (where $x_0$ is any fixed base-point on $\tilde{X}$). Using the group law, we can also write
$$
\forall\, (g,h)\in G^2,\ df_{gh}=(gh)^*\alpha-\alpha=h^*\left(g^*\alpha-\alpha\right)+h^*\alpha-\alpha=h^*df_g+df_h.
$$
The function $c(g,h):=f_{gh}-h^*f_g-f_h$ is thus a real constant: $c(g,h)\in\R$. Since it will be useful in the sequel, we can give a more concise formula for this constant by evaluating the previous equality at $x_0$ (the base-point); it yields
\begin{equation}\label{eq:class in terms of the primitive}
\forall\,(g,h)\in G^2,\ c(g,h)=-f_g(h^{-1}(x_0)).
\end{equation}
Using the formal expression $c(g,h)=f_{gh}-h^*f_g-f_h$, it is easy to see that the cochain $c$ is indeed a cocyle: it defines thus a degree~$2$ cohomology class $[c]\in H^2(G,\R)$.

\medskip

Let us now describe how the degree~$2$ classes coming from the group $G$ can be realized as differential forms. Let $c\in C^2(G,\R)$ satisfying the cocycle identity:
$$
\forall\, (g,h,k)\in G^3,\ c(g,h)=c(h,k)-c(gh,k)+c(g,hk).
$$
We can rewrite it in the following form:
\begin{equation}\label{eq:cocyle}
\forall\,  (g,h,\gamma)\in G^3,\ c(\gamma g,g^{-1}h)=c(g,g^{-1}h)-c(\gamma^{-1},\gamma h)+c(\gamma^{-1},\gamma g).
\end{equation}
We can cook up an exact $2$-form $\widetilde{\omega}$ on $\tilde{X}$ that is invariant under the action of $G=\pi_1(X)$. In this way, we shall get that $\widetilde{\omega}=\pi_X^*(\omega)$ and $\omega$ (its de Rahm cohomology class) will be the sought closed $2$-form on $X$.

To do so, let us fix a finite open cover $\{V_i\}_{i\in I}$ of $X$ with $V_i$ simply connected and let us pick one connected component $U_i$ of $p^{-1}(V_i)$ ($V_i$ being simply connected $p$ induces an isomorphism between $U_i$ and $V_i$) such that
$$
p^{-1}(V_i)=\bigcup_{g\in G}gU_i.
$$
Finally, let us denote by $\{\chi_i\}_{i\in I}$ a partition of unity subordinate to the open cover $\{V_i\}_{i\in I}$; it induces a partition of unity on $\tilde{X}$, to be denoted by $\{\chi_{i,g}\}_{(i,g)\in I\times G}$, subordinated to the open cover $\{gU_i\}_{(i,g)\in I\times G}$. The function $\chi_{i,g}$ is supported in $gU_i$ and is nothing but $\chi_{i}\circ p|_{gU_i}$. This family enjoys the obvious equivariance property
\begin{equation}\label{eq:equivariance-partition}
\forall\, (\gamma,g)\in G^2,\ \forall\,i\in I,\ \gamma^*\chi_{i,g}=\chi_{i,\gamma^{-1} g}.
\end{equation}
With this at hand, let us consider
\begin{equation}\label{eq:from defined by cocyle}
\begin{gathered}
\tilde{\omega}:=\sum_{\substack{(i,j)\in I^2 \\ (g,h)\in G^2}} c(g,g^{-1}h)\,d\chi_{i,g}\wedge d\chi_{j,h}\\
=d\biggl(\sum_{\substack{(i,j)\in I^2 \\ (g,h)\in G^2}} c(g,g^{-1}h)\,\chi_{i,g}\wedge d\chi_{j,h}\biggr)=d\alpha.
\end{gathered}
\end{equation}
From the last equality we see that $\tilde{\omega}$ is exact. Now, let us check that $\tilde{\omega}$ is invariant under the action of $\gamma\in G$. We have
\begin{align}
\gamma^*\tilde{\omega}&=\sum_{\substack{(i,j)\in I^2 \\ (g,h)\in G^2}} c(g,g^{-1}h)\,\gamma^*\left(d\chi_{i,g}\wedge d\chi_{j,h}\right) \nonumber \\
&=\sum_{\substack{(i,j)\in I^2 \\ (g,h)\in G^2}} c(g,g^{-1}h)\,d\chi_{i,\gamma^{-1} g}\wedge d\chi_{j,\gamma^{-1} h} \label{eq:computation0} \\
&=\sum_{\substack{(i,j)\in I^2 \\ (g,h)\in G^2}} c(\gamma g,g^{-1}h)\,d\chi_{i,g}\wedge d\chi_{j,h} \label{eq:computation1} \\
&=\sum_{\substack{(i,j)\in I^2 \\ (g,h)\in G^2}} \bigl(c(g,g^{-1}h)-c(\gamma^{-1},\gamma h)+c(\gamma^{-1},\gamma g)\bigr)\,d\chi_{i,g}\wedge d\chi_{j,h} \label{eq:computation2} \\
&=\tilde{\omega}. \nonumber
\end{align}
Indeed, the equality \eqref{eq:computation0} is just the equivariance property \eqref{eq:equivariance-partition}, the equality \eqref{eq:computation1} is a change of variables, and the equality \eqref{eq:computation2} is nothing but the cocycle property \eqref{eq:cocyle}. To see that the remaining terms of the sum vanish, it suffices to observe that we can rewrite the corresponding sum as
\begin{multline*}
\sum_{\substack{(i,j)\in I^2 \\ (g,h)\in G^2}} c(\gamma^{-1},\gamma h)\,d\chi_{i,g}\wedge d\chi_{j,h} \\
=-\sum_{(j,h)\in I\times G}c(\gamma^{-1},\gamma h)\,d\chi_{j,h}\wedge d\biggl(\sum_{(i,g)\in I\times G} \chi_{i,g}\biggr)=0
\end{multline*}
since the sum in parenthesis is identically equal to $1$.

\subsection*{Hyperbolic classes}

We now come to the notion of hyperbolic classes. Let us first recall their definition.

\begin{defn}\label{def:hyperbolic class}
Let $X$ be a compact manifold and $[\omega]\in H^2(X,\R)$. The class $[\omega]$ is said to be \emph{hyperbolic} if $\pi_X^*\omega$ admits a bounded primitive on $\tilde{X}$. We denote by $H^2_{\hyp}(X,\R)$ the corresponding subspace of $H^2(X,\R)$.
\end{defn}

\begin{rem}
By compactness of $X$, this definition is independent of the representative $\omega$ of the class and of the implicit Riemannian metric chosen to measure the norm of the differential forms (provided it comes from the compact base).
\end{rem}

By definition, a hyperbolic class comes from the group: $\omega\in H^2(G,\R)$ where $G=\pi_1(X)$. Since the condition above refers to a bounded primitive on $\tilde{X}$ it is not obvious that the subspace it defines is independent of the realization of the group $G$ as the fundamental group of a compact manifold. It was proven by Brunnbauer, Kotschick, and Schönlinner in \cite[Theorem 2.4]{BKS24} (see also \cite[Theorem~5.1]{Kedra09}). Here we give an explicit description of the subspace $H^2_{\hyp}(G,\R)$.

\begin{prop}\label{prop:explicit description}
Let us consider the following subspace defined in terms of cochains $C^2(G,\R)$:
\[
H^2_{\hyp}(G,\R):=\left\{[c] \in H^2(G,\R) \mid \forall\, h\in G,\ c(\bullet,h)\in\Linf{G,\R}\right\}.
\]
If $X$ is any compact manifold having $G$ as fundamental group, the Hurewicz morphism induces an isomorphism
\[
H^2_{\hyp}(G,\R)\simeq H^2_{\hyp}(X,\R).
\]
\end{prop}

\begin{proof}
Let $[\omega]\in H^2_{\hyp}(X,\R)$. With the notation used above, we get that $\tilde{\omega}=d\alpha$ with $\alpha$ a bounded $1$-form. In particular, the family of functions $f_g$ ($g\in G$) such that $g^*\alpha-\alpha=df_g$ is uniformly Lipschitz: $\Lip(f_g)\le 2|\alpha|_{\infty}$. From \eqref{eq:class in terms of the primitive}, we have
\[
| c(g,h)|=|f_g(h^{-1}(x_0))|=|f_g(h^{-1}(x_0))-f_g(x_0)|\le 2|\alpha|_{\infty}d(x_0,h(x_0))
\]
where $d$ is the distance induced on $\tilde{X}$ by any Riemannian metric on $X$ (the elements of $G$ act then by isometries on $(\tilde{X},d)$). We thus get the sought boundedness condition.

\medskip
In the reverse way, let $c=(c(g,h))_{(g,h)\in G^2}$ be a cocycle with $c(\bullet,h)\in\Linf{G,\R}$ for every $h\in G$. As in the proof of \cite[Lemma~2.10]{BCDT}, we just have to check that the form $\tilde{\omega}$ cooked up in~\eqref{eq:from defined by cocyle} has a bounded primitive. The primitive given by

\begin{align*}
\alpha &:=\sum_{\substack{(i,j)\in I^2 \\ (g,h)\in G^2}} c(g,g^{-1}h)\,\chi_{i,g}\wedge d\chi_{j,h} \\
 &=\sum_{\substack{(i,j)\in I^2 \\ (g,k)\in G^2}} c(g,k)\,\chi_{i,g}\wedge d\chi_{j,gk}
\end{align*}
has the desired property. Indeed, since $\Supp{\chi_{i,g}}=g(U_i)$, the product $\chi_{i,g}\wedge d\chi_{j,gk}$ does not vanish if $U_i\cap k(U_j)\neq\emptyset$. The action of $G$ on $\tilde{X}$ being proper, the set
\[
K:=\{k\in G\mid \exists (i,j)\in I^2,\ U_i\cap k(U_j)\neq\emptyset\}
\]
is finite and we can consider
\[
C:=\max(|c(\bullet,k)|_{\infty}\mid k\in K)<+\infty.
\]
With this notation, we have
\[
|\alpha|_{\infty}\le C |I| |K| \max(|d\chi_{i}|,\, i\in I)
\]
and the form $\tilde{\omega}$ has a bounded primitive.
\end{proof}

\subsection*{A cohomological reformulation}

The condition of being bounded when the second variable is fixed as been recently studied in relation with the so-called bounded and $\linf$-cohomology\footnote{These classes are called \emph{weakly bounded} in \cite{FS23,AM24}. It seems that this terminology originates in the article \cite{NR97} where degree~2 classes with integral coefficients were studied in terms of central extensions.}. Let us give a brief account of these theories. The bounded cohomology of a (discrete) group $G$ is the cohomology of the complex
\[C^\bullet_b(G,\R):=\left\{c\in C^\bullet(G,\R)\mid c\ \text{is bounded}\right\}\]
endowed with the same differential~\eqref{eq:differential}. The inclusion of sub-complex
\[C^\bullet_b(G,\R)\subset C^\bullet(G,\R)\]
gives rise to a comparison morphism:
\[\compb^\bullet\colon H^\bullet_b(G,\R)\To H^\bullet (G,\R).\]
In the sequel, we will denote its image as
\begin{equation}\label{eq:image-bounded-coh}
bH^\bullet(G,\R):=\compb^\bullet\left( H^\bullet_b(G,\R) \right)\subset H^\bullet (G,\R).
\end{equation}
For a comprehensive introduction to bounded cohomology, the reader is referred to the book \cite{FrigerioBook}.

The $\linf$-cohomology has been introduced by Gersten in an unpublished preprint entitled \emph{Bounded cocycles and combings of groups}; hopefully, some parts of the content of this preprint can be found in \cite{Gersten95}.\footnote{It is a surprising fact but the published paper \cite{Gersten92} has the same title as the above-mentioned preprint but it does not deal with $\linf$-cohomology.} The preprints \cite{Milizia1,Milizia2} give also a nice exposition of this theory from a geometric viewpoint. The $\linf$-cohomology is defined as the cohomology with values in a non-trivial module (see \cite{brown-book} for this general framework). Given a discrete group $G$, the Banach space $\Linf{G,\R}$ is a left $G$-module under the action:
\[\forall\,(g,h)\in G^2,\ \forall\, \alpha\in\Linf{G,\R},\ g\cdot\alpha (h):=\alpha(g^{-1}h).\]
The $\linf$-cohomology is then defined as
\[H^\bullet_{(\infty)}(G,\R):=H^\bullet \left(C^\bullet \left(G,\Linf{G,\R}\right),d \right),\]
where $d$ is a differential defined in a way similar to~\eqref{eq:differential} but taking the $G$ action on $\Linf{G,\R}$ into account.\footnote{The original definition given by Gersten was expressed in terms of $K(G,1)$ satisfying some finiteness conditions; this algebraic definition of the $\linf$-cohomology appears in \cite{Wienhard12}.} The inclusion of the constant (bounded) functions
\[\R\subset \Linf{G,\R}\]
induces another comparison morphism
\[\compinfty^\bullet\colon H^\bullet(G,\R)\To H^\bullet_{(\infty)}(G,\R).\]

These two comparison morphisms are compatible in quite a strong way.
\begin{prop}[\emph{cf.} \protect{\cite[Remark, p.~91]{Gersten95}}]\label{prop:compatibility-comparison-maps}
For any degree $k\ge 1$, the composition
\[H^k_b(G,\R)\xrightarrow{\compb^k}H^k(G,\R)\xrightarrow{\compinfty^k}H^k_{(\infty)}(G,\R)\]
is the zero map.
\end{prop}

The description of hyperbolic classes given above combined with \cite[Proposition~1.11]{FS23} yields the following statement.
\begin{prop}\label{prop:hyperbolic=kernel}
For any finitely presented group $G$, the subspace $H^2_{\hyp}(G,\R)$ is nothing but the kernel of the map $\compinfty^2$:
\[H^2_{\hyp}(G,\R)=\ker\left(\compinfty^2\colon H^2(G,\R) \To H^2_{(\infty)}(G,\R)\right).\]
\end{prop}

\begin{rem}
Propositions~\ref{prop:compatibility-comparison-maps} and~\ref{prop:hyperbolic=kernel} give back the inclusion
\[bH^2(G,\R)\subset H^2_{\hyp}(G,\R).\]
It should be noted that this inclusion is not an equality in general. Examples of groups $G$ with strict inclusion
\[bH^2(G,\R)\subsetneq H^2_{\hyp}(G,\R)\]
can be found in \cite[\S~4]{FS23} and \cite[\S~3]{AM24}: the example in the former is only finitely generated but the latter reference provides us with a finitely presented group together with a degree~$2$ hyperbolic class which is not bounded.
\end{rem}

\subsection*{Basic properties of hyperbolic classes}

With the characterization of hyperbolic classes given above (Proposition~\ref{prop:explicit description}), the following result is an immediate consequence of the existence of a right-invariant mean value on $\Linf{G,\R}$ for amenable groups.
\begin{prop}[\emph{cf.}~\protect{\cite[Proposition~5.1]{FS23}}]\label{prop:vanishing amenable}
If $G$ is an amenable group, we have
\[
H^2_{\hyp}(G,\R)=0.
\]
\end{prop}
The latter result can be found in various place of the literature in other geometric contexts \cite{Kedra09,BKS24,BDET}.

Before stating the next result, let us note that the hyperbolic classes are preserved by pull-back: if $\varphi\colon G\to Q$ is a morphism between finitely presentable groups, the class $\varphi^*\alpha$ is hyperbolic if $\alpha\in H^2_{\hyp}(G,\R)$ (this is obvious in view of Proposition~\ref{prop:explicit description}). We now investigate the behavior of the hyperbolic classes with respect to a product of groups.
\begin{prop}\label{prop:hyp product}
Let $G_1$ and $G_2$ be two finitely presentable groups and let us denote $p_1$ and $p_2$ the corresponding projections. The map $p_1^*\oplus p_2^*$ induces an isomorphism
\[H^2_{\hyp}(G_1,\R)\oplus H^2_{\hyp}(G,\R)\simeq H^2_{\hyp}(G_1\times G_2,\R).\]
\end{prop}
\begin{proof}
The arguments below are essentially contained in the proof of \cite[Proposition~5.17]{FS23}. Let us consider $j_1\colon G_1\to G_1\times G_2$ (resp.~$j_2$) the natural injection of $G_1$ (resp.~$G_2$). If $\alpha\in H^2_{\hyp}(G_1\times G_2,\R)$, let us consider $\tilde{\alpha}:=\alpha-p_1^*\left(j_1^*\alpha\right)-p_2^*\left(j_2^*\alpha\right)$. By the observation above, the class $\tilde{\alpha}$ is still hyperbolic. We want to prove that $\tilde{\alpha}=0$ and, to do so, we look at the K\"unneth decomposition of the homology of the products:
\[H_2(G_1\times G_2,\R)\simeq H_2(G_1,\R)\oplus H_1(G_1,\R)\otimes H_1(G_2,\R)\oplus H_2(G_2,\R).\]
We shall prove that $\langle\tilde{\alpha},\beta\rangle =0$ for any $\beta\in H_2(G_1\times G_2,\R)$. If $\beta=(j_1)_*\beta_1$ with $\beta_1\in H_2(G_1,\R)$, we then have:
\begin{align*}
\langle\tilde{\alpha},(j_1)_*\beta_1\rangle &= \langle \alpha,(j_1)_*\beta_1\rangle - \langle p_1^*\left(j_1^*\alpha\right) ,(j_1)_*\beta_1\rangle - \langle p_2^*\left(j_2^*\alpha\right) , (j_1)_*\beta_1\rangle \\
&= \langle j_1^* \alpha,\beta_1\rangle - \langle j_1^*p_1^*\left(j_1^*\alpha\right) ,\beta_1\rangle - \langle j_1^*p_2^*\left(j_2^*\alpha\right) , \beta_1\rangle \\
&=\langle j_1^* \alpha,\beta_1\rangle - \langle j_1^*\alpha ,\beta_1\rangle - \langle 0 , \beta_1\rangle =0
\end{align*}
and the same computation applies if $\beta=(j_2)_*\beta_2$ with $\beta_2\in H_2(G_2,\R)$. We are left to consider the case when $\beta\in H_1(G_1,\R)\otimes H_1(G_2,\R)$ and, by linearity, we are reduced to the case $\beta=\gamma_1\otimes\gamma_2$ with $\gamma_i\in H_1(G_i,\R)$ (for $i=1,2$). Each class $\gamma_i$ being induced by a morphism $\varphi_i\colon \Z\to G_i$, the class $\beta$ is actually induced by a morphism $\varphi\colon \Z^2\to G_1\times G_2$ and we can write $\beta=\varphi_*\gamma$ with $\gamma\in H_2(\Z^2,\R)$. We then have:
\[\langle \tilde{\alpha},\beta\rangle = \langle \tilde{\alpha},\varphi_*\gamma\rangle = \langle \varphi^*\tilde{\alpha},\gamma\rangle = \langle 0,\gamma\rangle =0.\]
The group $\Z^2$ is indeed amenable and Proposition~\ref{prop:vanishing amenable} (plus the fact that $\tilde{\alpha}$ is hyperbolic) implies that $\varphi^*\tilde{\alpha}=0$. As mentioned above, we finally have $\tilde{\alpha}=0$ and $\alpha=p_1^*\left(j_1^*\alpha\right)+p_2^*\left(j_2^*\alpha\right)$ as desired.
\end{proof}

\begin{quest}
In particular, if $G_2$ is amenable, we get that $H^2_{\hyp}(G_1\times G_2,\R)\simeq H^2_{\hyp}(G_1,\R)$. It is then natural ask if the corresponding statement holds in the more general following setting: let $p\colon G\twoheadrightarrow Q$ be a surjective morphism of finitely presented groups such that $\ker(p)$ is amenable; is it true that $p$ induces a surjective map $p^*\colon H^2_{\hyp}(Q,\R)\twoheadrightarrow H^2_{\hyp}(G,\R)$? This is know to hold for bounded classes ($p^*$ is even an isomorphism in this case, see for instance \cite[Corollary~4.25]{FrigerioBook}).
\end{quest}

\subsection*{K\"ahler topologically hyperbolic surfaces}

We end this note by showing the following statement.
\begin{thm}\label{prop:general type surface}
Let $X$ be a compact K\"ahler surface and assume that $X$ is K\"ahler topologically hyperbolic. Then $X$ is of general type: $\kappa(X)=2$.
\end{thm}
\begin{proof}
We explore the Kodaira classification in the K\"ahler topologically hyperbolic case. From Corollary \ref{cor:pseff} and Theorem \ref{thm:c1} of the present paper, we know that the Kodaira dimension of a K\"ahler topologically hyperbolic surface $X$ has to satisfy $\kappa(X)\ge1$. If $\kappa(X)=1$, then $X$ is the total space of an elliptic fibration $f\colon X\to C$ over a curve. The fundamental group of $X$ can be described according to the following alternative:
\begin{itemize}
\item[---] either $\pi_1(E)_X:=\im(\pi_1(E)\to\pi_1(X))=1$ (where $E$ is a general smooth fiber of $f$) and, up to replacing $X$ with a finite \'etale covering, $f_*\colon \pi_1(X)\simeq\pi_1(C)$;
\item[---] or $\pi_1(E)$ injects into $\pi_1(X)$ and, up to a finite \'etale covering, $X$ is a product $E\times C$.
\end{itemize}
In both cases, we get that $H^2_{\hyp}(\pi_1(X),\R)=H^2_{\hyp}(\pi_1(C),\R)$ and in particular $\alpha^2=0$ for any hyperbolic class on $X$. A compact K\"ahler surface with $\kappa(X)<2$ cannot thus be K\"ahler topologically hyperbolic and we are done.
\end{proof}

\subsubsection*{Acknowledgments}
I would like to warmly thank Pierre~Py for reading a draft of this note, for drawing my attention to the reference \cite{FS23} and for enlightening discussions on the various cohomology theories of groups.

\bibliography{bibliography}{}

\end{document}